\definecolor{darkergreen}{rgb}{0.0, 0.5, 0.0}
\numberwithin{equation}{section}
\newcommand{\be}{\begin{eqnarray}}
\newcommand{\ee}{\end{eqnarray}}
\newcommand{\ce}{\begin{eqnarray*}}
\newcommand{\de}{\end{eqnarray*}}
\newtheorem{theorem}{Theorem}[section]
\newtheorem{lemma}[theorem]{Lemma}
\newtheorem{remark}[theorem]{Remark}
\newtheorem{definition}[theorem]{Definition}
\newtheorem{proposition}[theorem]{Proposition}
\newtheorem{Examples}[theorem]{Example}
\newtheorem{corollary}[theorem]{Corollary}
\newtheorem{assumption}[theorem]{Assumption}
\newenvironment{nouppercase}{
  
  \renewcommand{\uppercasenonmath}[1]{}}{}
\def\[{{\Big[}}
\def\]{{\Big]}}
\def\<{{\langle}}
\def\>{{\rangle}}
\def\({{\Big(}}
\def\){{\Big)}}
\def\bx{{\mathbf{x}}}
\def\sgn{\mbox{\rm sgn}}
\def\={&\!\!=\!\!&}
\def\1{{\mathbf{1}}}
\def\geq{\geqslant}
\def\leq{\leqslant}
\def\ge{\geqslant}
\def\le{\leqslant}
\def\[{{\Big[}}
\def\]{{\Big]}}
\def\<{{\langle}}
\def\>{{\rangle}}
\def\({{\Big(}}
\def\){{\Big)}}
\def\bx{{\mathbf{x}}}
\def\sgn{\mbox{\rm sgn}}
\def\={&\!\!=\!\!&}
\def\bt{\begin{theorem}}
\def\et{\end{theorem}}
\def\bl{\begin{lemma}}
\def\el{\end{lemma}}
\def\br{\begin{remark}}
\def\er{\end{remark}}
\def\bx{\begin{Examples}}
\def\ex{\end{Examples}}
\def\bd{\begin{definition}}
\def\ed{\end{definition}}
\def\bp{\begin{proposition}}
\def\ep{\end{proposition}}
\def\bc{\begin{corollary}}
\def\ec{\end{corollary}}
\def\geq{\geqslant}
\def\leq{\leqslant}
\def\ge{\geqslant}
\def\le{\leqslant}
\def\<{\langle} \def\>{\rangle}
\tikzset{
        dot/.style={circle,fill=black,inner sep=0pt, outer sep=0.7pt, minimum size=1mm},
        Phi/.style={white!40!red,thick,snake=coil,segment amplitude=0.6pt, segment length=2pt},
         Z/.style={black!40!green,thick,snake=coil,segment amplitude=0.6pt, segment length=2pt},
        C/.style={thick,black!20!blue},
          Cr/.style={thick,black!20!red},
            Cg/.style={thick,black!20!green},
       }
\begin{document}

\title[Ergodicity for Dean--Kawasaki Equation]{\LARGE Ergodicity for the Dean--Kawasaki Equation with Dirichlet Boundary Conditions: Taming the Square-Root}

\author[Shyam Popat]{\large Shyam Popat}
\address[S. Popat]{Centre de Math\'ematiques Appliqu\'ees (CMAP), \'Ecole Polytechnique, 91120 Palaiseau, France}
\email{shyam.popat@polytechnique.edu}

\author[Zhengyan Wu]{\large Zhengyan Wu}
\address[Z. Wu]{Department of Mathematics, Technische Universit\"at M\"unchen, Boltzmannstr. 3, 85748 Garching, Germany}
\email{wuzh@cit.tum.de}

\begin{abstract}
In this paper, we establish the ergodicity of generalized Dean--Kawasaki equations with correlated noise and Dirichlet boundary conditions. In contrast to the ergodicity results of Fehrman, Gess, and Gvalani \cite{fehrman2022ergodicity}, our analysis accommodates irregular, square-root type noise coefficients. For such irregular coefficients, we prove that the law of the classical Dean--Kawasaki equation converges exponentially fast to equilibrium, while for the porous medium type Dean--Kawasaki equation, the convergence occurs at a polynomial rate. Furthermore, we obtain a regularization by noise effect, showing that the polynomial convergence rate improves to an exponential one whenever the noise coefficient is sufficiently regular, including the case of conservative multiplicative linear noise. Our approach relies on establishing a supercontraction property in a suitably weighted Lebesgue space, achieved through a refined doubling of variables argument. The construction of the weight function crucially exploits the specific structure of the Dean--Kawasaki-type correlated noise. 
\end{abstract}

\subjclass[2010]{60H15, 35R60, 82B21, 82B31, 37A25}
\keywords{Dean--Kawasaki equation, kinetic solution, Dirichlet boundary condition, invariant measure, ergodicity}

\date{\today}

\begin{nouppercase}
\maketitle
\end{nouppercase}

\setcounter{tocdepth}{1}
\tableofcontents

\section{Introduction}

In this paper, we investigate the long-time behaviour of the generalised Dean--Kawasaki equation with correlated noise
\begin{align}\label{SPDE-0}
\begin{cases}
    \partial_t\rho=\Delta\Phi(\rho)-\nabla\cdot(\sigma(\rho)\circ\dot{\xi}^F+\nu(\rho)),& (x,t)\in U\times(0,T],\\
	\rho(x,0)=\rho_0(x),& (x,t)\in U\times\{t=0\},\\
	\Phi(\rho(x,t))=\rho_b(x),& (x,t)\in\partial U\times(0,T]. 
\end{cases}
\end{align}
Here $U$ denotes a $C^2$-regular bounded domain in $\mathbb{R}^d$ and $T>0$ denotes a terminal time.
The Stratonovich noise $\circ\dot{\xi}^F$ is white in time and correlated in space.
The assumptions on the non-linear functions $\Phi, \sigma$ and $\nu$ allow us to consider a wide range of non-linear SPDE, including the classical Dean--Kawasaki equation with correlated noise 
\begin{equation}\label{SPDE-1}
\partial_t\rho=\Delta\rho-\nabla\cdot(\sqrt{\rho}\circ\dot{\xi}^F+\nu(\rho)). 	
\end{equation}
The irregularity of solutions to \eqref{SPDE-0} means that we can only express the boundary condition $\rho_b$ indirectly via $\Phi(\rho)$.
This is done using the trace theorem, see Chapter 5.5 of Evans \cite{evans2022partial}.
Our results apply to the class of boundary data $\rho_b$ for which well-posedness of equation \eqref{SPDE-0} holds, see the paper of the first author \cite{Shyam25}. 
This includes all non-negative constant functions including zero and all smooth functions bounded away from zero.

The Dean--Kawasaki equation was first introduced by Dean \cite{D96} and Kawasaki \cite{K98} as a continuum stochastic PDE formally satisfied by the empirical density of an interacting diffusion system. The characteristic conservative square-root type noise is designed to statistically capture the fluctuations of the underlying particle system. From the perspective of fluctuating hydrodynamics, the Dean--Kawasaki equation can also be interpreted as possessing a gradient-flow type structure, with the stochastic term governed by the fluctuation-dissipation relation. In the case of a bounded domain, it is natural to impose Dirichlet boundary conditions on the density. Physically, this corresponds to coupling the particle system to reservoirs, sources, or sinks at the boundary, thereby enforcing a prescribed density (or chemical potential) at the walls. At the microscopic level, such boundary conditions arise, for instance, when particles are injected or removed at specified rates, or when the boundary acts as a particle bath maintained at fixed concentration. 

To the best of authors' knowledge, the ergodicity of Dean--Kawasaki type equations has previously been investigated by Fehrman, Gess and Gvalani \cite{fehrman2022ergodicity}, where ergodicity was established for the Dean--Kawasaki equation on the torus,  under It\^o-type noise and with a regularized diffusion coefficient. The authors of \cite{fehrman2022ergodicity} employed techniques from the theory of random dynamical systems, combined with gradient estimates, to demonstrate that the solution to the regularized Dean--Kawasaki equation generates a random dynamical system and admits a unique invariant measure. However, their approach encounters limitations when dealing with the irregular square-root noise coefficient. In contrast, the present work adopts a fundamentally different approach to prove ergodicity, specifically designed to handle irregular diffusion coefficients, including the particularly challenging square-root case. It is important to note, however, that the validity of this method is currently restricted to equations with Dirichlet boundary conditions. This distinction highlights both the novelty and the limitations of the approach in comparison to existing results.

\subsection{Main results} \label{subsec: main results}
The main results of this paper consist of two parts: a qualitative ergodicity result for \eqref{SPDE-0}, and quantitative estimates on the convergence rate of the laws of \eqref{SPDE-0} towards equilibrium. For the reader's convenience, the precise assumptions on the coefficients $\Phi$, $\nu$, and $\sigma$ will be specified later in Section \ref{subsec: assumptions}. However, we emphasize that our framework for $\Phi$ and $\sigma$ includes the typical case $\Phi(\xi)=\xi^m$ and $\sigma(\xi)=\xi^{\frac{m}{2}}$ for $\xi\ge0$ and any $m\ge1$. This particular choice allows \eqref{SPDE-0} to be interpreted as the fluctuating hydrodynamics of the zero-range process with polynomial jump rates. In the special case $m=1$, the equation \eqref{SPDE-0} reduces to the classical Dean--Kawasaki equation introduced by Dean \cite{D96}, which describes the fluctuating mean-field limit of Brownian particles. 

We will analyse the ergodicity of the system in a weighted Lebesgue space. Specifically, let $w: U \to \mathbb{R}$ be a non-negative integrable function. For $p\in\mathbb{N}$ we define $L^p_{w;x}$ to be the space of $p$'th power integrable functions on $U$ with respect to the measure $w(x)\,dx$. The precise construction of the weight function will be presented later. Under this weighted framework, the ergodicity result can be stated as follows.

\begin{theorem}
Suppose that $(\Phi,\sigma)$ satisfy either Assumption \ref{asm: classic-DK} or \ref{asm: assumptions on non-linear coeffs for contraction estime}, and that $(\Phi,\nu)$ further satisfy Assumptions \ref{asm: Assumptions on nu} and \ref{asm: assumption on curly A}. Then there exists a unique invariant measure $\mu \in \mathcal{P}(L^1_x)$, where $\mathcal{P}(L^1_{x})$ denotes the space of probability measures on $L^1_{x}$.

\end{theorem}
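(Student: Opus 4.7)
The plan is to separately prove uniqueness and existence, with the technical core being a weighted $L^1$-super-contraction estimate for the Markov semigroup $(P_t)_{t\ge 0}$ associated with \eqref{SPDE-0}. First I would establish, for any two kinetic solutions $\rho^1,\rho^2$ sharing the same boundary datum $\rho_b$,
\begin{equation*}
 \bE\,\|\rho^1_t-\rho^2_t\|_{L^1_{w;x}}\le \varphi(t)\,\|\rho^1_0-\rho^2_0\|_{L^1_{w;x}},\qquad \varphi(t)\to 0,
\end{equation*}
with $\varphi$ exponential in the classical case $\Phi(\xi)=\xi$, $\sigma(\xi)=\sqrt{\xi}$ and polynomial in the porous-medium generalisation $\Phi(\xi)=\xi^m$, $\sigma(\xi)=\xi^{m/2}$, $m>1$. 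The argument is a doubling-of-variables at the kinetic level used in \cite{Shyam25}: double the space and velocity variables, apply It\^o's formula to a smoothed $|\chi(\xi;\rho^1)-\chi(\xi;\rho^2)|$ weighted by $w(x)$, and pass to the diagonal. The dangerous term is the It\^o correction coming from the Stratonovich noise, which is proportional to $|\sigma(\rho^1)-\sigma(\rho^2)|^2$; for $\sigma(\xi)=\sqrt{\xi}$ this blows up near $\{\rho=0\}$ and cannot be bounded by $|\rho^1-\rho^2|$. The weight $w$ is engineered to absorb precisely this term by exploiting the Hessian of the spatial correlation kernel $F$ of $\dot\xi^F$, turning the differential inequality into a linear one in $\|\rho^1-\rho^2\|_{L^1_{w;x}}$.

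Uniqueness then follows by a standard coupling: if $\mu_1,\mu_2\in\cP(L^1_x)$ are both invariant, run two solutions with initial laws $\mu_1,\mu_2$ on a common probability space and let $t\to\infty$ in the super-contraction to force the $L^1_{w;x}$-Wasserstein-$1$ distance between $\mu_1$ and $\mu_2$ to zero; since $w>0$ on the interior of $U$ this identifies $\mu_1=\mu_2$. For existence I would use Krylov--Bogoliubov. Starting from a fixed regular initial datum (for instance a smooth extension of $\rho_b$ into $U$), the kinetic solution theory of \cite{Shyam25} provides uniform-in-$t$ entropy and energy bounds which, together with the parabolic dissipation $\bE\,\|\nabla\Phi(\rho_s)\|_{L^2_{s,x}}^2<\infty$, yield tightness of the time averages $T^{-1}\int_0^T\Law(\rho_t)\,dt$ on $L^1_x$. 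Any weak limit is invariant because the super-contraction above delivers Feller continuity of $P_t$ on $L^1_x$.

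The principal obstacle is the construction and analysis of $w$. Because $\sigma$ is only $\tfrac{1}{2}$-H\"older at the origin, the bilinear form produced by the doubling-of-variables step is genuinely not coercive in unweighted $L^1$; compensating for this requires $w$ to satisfy, simultaneously, a pointwise elliptic inequality tied to $\Phi'$ and to $\nabla^2 F(0)$ (to absorb the stochastic cross terms), compatibility with the Dirichlet boundary data so that traces of $\Phi(\rho^i)$ on $\partial U$ produce no surviving boundary flux, and enough interior positivity to translate $L^1_{w;x}$-contraction into $L^1_x$-convergence. Balancing these three conditions and extracting the sharp rate $\varphi(t)$ from the resulting weighted inequality — in particular, tracing how the polynomial rate in $m$ improves to an exponential one once $\sigma$ is regularised — is precisely the ``taming of the square-root'' announced in the abstract.
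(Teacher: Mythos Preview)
Your overall architecture---super-contraction in $L^1_{w;x}$ via doubling of variables, then uniqueness by coupling and existence separately---matches the paper, but two points are off in ways that would derail the execution.

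\textbf{The weight does not absorb the square-root singularity.} The It\^o correction and quadratic-covariation terms you flag as ``dangerous'' in fact cancel algebraically on the diagonal: in the paper's notation the error term $I_t^{err}$ is $O(\delta)$ after the velocity-mollification and vanishes in the limit, \emph{independently of $w$}. The weight's actual job is to produce a genuine negative contribution from the \emph{deterministic} structure: integrating by parts turns $\int(\nabla\Phi(\rho^1)-\nabla\Phi(\rho^2))\cdot\nabla w\,\sgn(\rho^2-\rho^1)$ into $\int|\Phi(\rho^1)-\Phi(\rho^2)|\,\Delta w$, and similarly the flux $\nu$ produces $\sum_i\int|\nu_i(\rho^1)-\nu_i(\rho^2)|\,\partial_{x_i}w$. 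The required conditions are therefore $\Delta w<0$, $\partial_{x_i}w<0$ for all $i$, and $\nabla\cdot(F_1\nabla w)<0$; the explicit choice is $w(x)=-e^{\alpha x\cdot e}+C$ with $e=d^{-1/2}(1,\dots,1)$ and $\alpha$ large depending on $F_1,F_2$. No Hessian of the correlation kernel enters. In the porous-medium case the Stratonovich correction does contribute a signed term involving $|\Psi_\sigma(\rho^1)-\Psi_\sigma(\rho^2)|$, but it is \emph{discarded} (being non-positive) rather than used, precisely because $\Psi_\sigma$ may fail to be integrable when $\sigma(\xi)=\sqrt{\xi}$.

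\textbf{Existence is not via Krylov--Bogoliubov.} The paper extends the dynamics to negative times, defines $\eta_s:=\rho_s(0;\rho_0)$ (the solution at time $0$ started at time $s<0$), and uses the super-contraction to show $(\eta_s)_{s\le-1}$ is Cauchy in $L^1_\omega L^1_{w;x}$ as $s\to-\infty$, with limit $\eta$ independent of $\rho_0$; then $\mu:=\mathcal{L}(\eta)$ is invariant. This pull-back argument needs no tightness or compactness. Your Krylov--Bogoliubov route would require tightness of the time-averaged laws in $L^1_x$, which is plausible from the $H^1$ energy on $\Phi(\rho)$ but is an extra step the paper avoids entirely.

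A final correction: in the porous-medium case the decay is not multiplicative. The super-contraction yields $f(t)-f(s)\le -c\int_s^t f(r)^{q_0+1}\,dr$ for $f(t)=\bE\|\rho^1_t-\rho^2_t\|_{L^1_{w;x}}$, and comparison with the ODE $h'=-ch^{q_0+1}$ (Lemma~\ref{lemma: lemma B.2 from DGT}) gives $f(t)\le ct^{-1/q_0}$ \emph{uniformly over all initial data}, not $\varphi(t)\|\rho^1_0-\rho^2_0\|$.
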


\begin{theorem}
Suppose that $(\Phi,\sigma,\nu)$ satisfy Assumptions~\ref{asm: classic-DK}, \ref{asm: Assumptions on nu}, and~\ref{asm: assumption on curly A}. These assumptions, in particular, cover the case $(\Phi(\xi),\sigma(\xi))=(\xi,\sqrt{\xi})$. Let $(P_t)_{t\ge0}$ denote the semigroup generated by \eqref{SPDE-0}, and let $\mu$ be its invariant measure. Then there exist a constants $c_1,c_2\in(0,\infty)$ independent of $t$ such that, for every $t\ge0$,  
\begin{equation*}
    \sup_{\rho_0\in L^2_{x}}\sup_{\|F\|_{\mathrm{Lip}(L^1_{w;x})}\le1}
    \left|P_t F(\rho_0)-\int_{L^1_{w;x}}F(\tilde\rho)\,\mu(d\tilde\rho)\right|
    \le c_1 e^{-c_2 t},
\end{equation*}
where $\mathrm{Lip}(L^1_{w;x})$ denotes the space of Lipschitz continuous functions from $L^1_{w;x}$ to $\mathbb{R}$.

On the other hand, suppose that $(\Phi,\sigma,\nu)$ satisfy Assumptions~\ref{asm: assumptions on non-linear coeffs for contraction estime}, \ref{asm: Assumptions on nu}, and~\ref{asm: assumption on curly A}. These assumptions, in particular, cover the case $(\Phi(\xi),\sigma(\xi))=(\xi^m,\xi^{\frac{m}{2}})$ for any $m>1$. Then there exists a constant $c\in(0,\infty)$ independent of $t$ such that, for every $t\ge0$,  
\begin{equation*}
    \sup_{\rho_0\in L^2_{x}}\sup_{\|F\|_{\mathrm{Lip}(L^1_{w;x})}\le1}
    \left|P_t F(\rho_0)-\int_{L^1_{w;x}}F(\tilde\rho)\,\mu(d\tilde\rho)\right|
    \le c\,t^{-\frac{1}{q_0}},
\end{equation*}
where $q_0$ is the exponent associated with Assumption~\ref{asm: assumption on curly A}. 
\end{theorem}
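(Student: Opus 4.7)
The plan is to reduce both convergence bounds to a single quantitative contraction estimate in $L^1_{w;x}$ between two solutions of \eqref{SPDE-0} driven by the same noise, and to establish that estimate by a refined doubling-of-variables argument. Let $\rho^{(1)}$ be the solution of \eqref{SPDE-0} issued from $\rho_0 \in L^2_x$, and let $\rho^{(2)}$ be issued from a random initial condition distributed according to the invariant measure $\mu$, independent of and driven by the \emph{same} realisation of $\xi^F$ as $\rho^{(1)}$. By Kantorovich--Rubinstein duality,
\begin{equation*}
    \sup_{\|F\|_{\mathrm{Lip}(L^1_{w;x})}\le 1}
    \Big| P_t F(\rho_0) - \int F\, d\mu \Big|
    \;\le\; \mathbb{E}\,\bigl\| \rho^{(1)}_t - \rho^{(2)}_t \bigr\|_{L^1_{w;x}},
\end{equation*}
so both rate statements reduce to quantitative decay of $\mathbb{E}\,\|\rho^{(1)}_t-\rho^{(2)}_t\|_{L^1_{w;x}}$, uniform in the distribution of $\rho^{(2)}_0$.

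\textbf{Doubling of variables and the weight.} To obtain that decay, I would apply the Kruzhkov doubling-of-variables technique to the kinetic formulation of \eqref{SPDE-0}, testing with a regularisation of $w(x)\,\mathrm{sgn}(\rho^{(1)}-\rho^{(2)})$ and passing to the limit. Since both solutions are driven by the same noise, the leading stochastic terms cancel exactly, and only the Stratonovich-to-It\^o correction survives; this correction produces a quadratic form in $\sigma(\rho^{(1)})-\sigma(\rho^{(2)})$ weighted by the covariance kernel $F\star F$ of $\xi^F$. The weight $w$ is engineered, using that covariance structure together with the principal Dirichlet eigenfunction of an appropriate linearised porous-medium-type operator, so that this quadratic form is absorbed by the dissipation coming from $\Delta(\Phi(\rho^{(1)})-\Phi(\rho^{(2)}))$. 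Crucially, $w$ is chosen to vanish on $\partial U$, so that the boundary flux created by the weighted integration by parts is controlled via the Dirichlet boundary condition $\Phi(\rho)=\rho_b$. The outcome is a differential inequality
\begin{equation*}
    \frac{d}{dt}\, \mathbb{E}\, \bigl\| \rho^{(1)}_t - \rho^{(2)}_t \bigr\|_{L^1_{w;x}}
    \;\le\; -\,\mathbb{E}\,\mathcal{A}\bigl(\rho^{(1)}_t,\rho^{(2)}_t\bigr),
\end{equation*}
for a non-negative dissipation functional $\mathcal{A}$ whose lower bound is precisely what distinguishes the two rates.

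\textbf{From the contraction to the two rates, and the main obstacle.} Under Assumption~\ref{asm: classic-DK}, $\Phi$ is the identity, so a weighted Poincar\'e-type inequality on $U$ with the boundary-vanishing weight $w$ yields $\mathcal{A}(u,v)\ge \kappa\, \|u-v\|_{L^1_{w;x}}$, and Gronwall then gives the exponential bound with explicit constants $c_1,c_2$. Under Assumption~\ref{asm: assumptions on non-linear coeffs for contraction estime}, $\Phi$ degenerates at the origin; Assumption~\ref{asm: assumption on curly A} quantifies this degeneracy through the exponent $q_0$, and interpolating the dissipation against the $L^2_x$ a priori bound for the initial datum gives only the weaker estimate $\mathcal{A}(u,v)\ge \kappa\, \|u-v\|_{L^1_{w;x}}^{1+q_0}$. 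The scalar differential inequality $y'(t)\le -\kappa\, y(t)^{1+q_0}$ then integrates to $y(t)\le c\, t^{-1/q_0}$, which is precisely the announced polynomial rate. The principal obstacle in this programme is the doubling-of-variables step itself: for the square-root coefficient $\sigma(\xi)=\sqrt{\xi}$, the naive Kruzhkov computation produces terms of the form $(\sqrt{\rho^{(1)}}-\sqrt{\rho^{(2)}})^2$ that are not controllable by classical It\^o methods, and it is exactly the tailored construction of $w$ using the structure of $F\star F$ together with the Dirichlet boundary condition that makes these terms integrable and of the correct sign; once this refined doubling-of-variables inequality is secured, everything else is a Gronwall or interpolation exercise.
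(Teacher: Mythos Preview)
Your high-level strategy---establish a weighted $L^1$ differential inequality via doubling of variables, then integrate (Gronwall in the linear case, an ODE comparison in the superlinear case)---matches the paper. Several of your technical expectations about the weight $w$ and the dissipation mechanism are off, however, and would derail an actual proof.

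First, the weight does \emph{not} vanish on $\partial U$ and is not built from a Dirichlet eigenfunction. In the paper it is the explicit, strictly positive function $w(x)=-e^{\alpha\, x\cdot e}+C$ on $\bar U$, with $\alpha$ chosen so that $\Delta w<0$, $\partial_{x_i}w<0$, and $F_1\Delta w+c\,|F_2\cdot\nabla w|<0$ everywhere. Strict positivity of $w$ is essential: it makes $\|\cdot\|_{L^1_{w;x}}$ equivalent to $\|\cdot\|_{L^1_x}$, so conclusions transfer back. The boundary terms from integration by parts vanish not because $w$ vanishes there but because both solutions share the \emph{same} Dirichlet data $\rho_b$, hence $\Phi(\rho_1)-\Phi(\rho_2)=0$ on $\partial U$.

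Second, the dissipation does not come from a Poincar\'e inequality, nor from absorbing a Stratonovich-correction quadratic form. After doubling variables, the martingale terms vanish in expectation and the error terms (including the quadratic covariation) cancel in the regularisation limit. The surviving negative contribution is
\begin{equation*}
\int_0^t\!\!\int_U |\Phi(\rho_1)-\Phi(\rho_2)|\,\Delta w\,dy\,ds \;+\; \sum_{i=1}^d \int_0^t\!\!\int_U |\nu_i(\rho_1)-\nu_i(\rho_2)|\,\partial_{x_i}w\,dy\,ds,
\end{equation*}
which is negative simply because $\Delta w<0$ and $\partial_{x_i}w<0$. The Stratonovich-correction terms produce an additional contribution involving $|\Psi_\sigma(\rho_1)-\Psi_\sigma(\rho_2)|$ which is shown to be non-positive and then \emph{dropped}, precisely because for $\sigma(\xi)=\sqrt{\xi}$ the function $\Psi_\sigma(\xi)=\int_0^\xi(\sigma')^2$ is ill-defined. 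The square-root difficulty is circumvented, not absorbed.

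Third, no interpolation against an $L^2_x$ a priori bound is used. Assumption~\ref{asm: assumption on curly A} gives the pointwise lower bound $\mathcal{A}(\xi_1,\xi_2)\ge c|\xi_1-\xi_2|^{q_0+1}$; H\"older (to extract $w$) and Jensen then yield $\mathbb{E}\|\rho_1-\rho_2\|_{L^{q_0+1}_x}^{q_0+1}\ge c\,(\mathbb{E}\|\rho_1-\rho_2\|_{L^1_{w;x}})^{q_0+1}$, and a scalar comparison lemma for $f'\le -cf^{q_0+1}$ gives $f(t)\le ct^{-1/q_0}$ \emph{uniformly} in the initial data, since the solution $(f(0)^{-q_0}+cq_0t)^{-1/q_0}$ is bounded by $(cq_0t)^{-1/q_0}$ independently of $f(0)$. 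In the classical case $\Phi(\xi)=\xi$ one has $\mathcal{A}(\xi_1,\xi_2)\ge |\xi_1-\xi_2|$ directly, and Gronwall finishes.

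Your reduction via Kantorovich--Rubinstein and a coupling with a stationary copy is a valid alternative to the paper's route, which instead extends the dynamics to negative times and constructs $\mu$ as the law of the pullback limit $\lim_{s\to-\infty}\rho_s(0;\rho_0)$; the paper's construction avoids having to check that $\mu$-a.e.\ initial datum lies in $L^2_x$.
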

The results on the convergence rate exhibit a clear dichotomy. For the classical Dean--Kawasaki equation, corresponding to $(\Phi(\xi),\sigma(\xi))=(\xi,\sqrt{\xi})$, the laws of \eqref{SPDE-0} converge exponentially fast to equilibrium. In contrast, for the more general porous medium case $(\Phi(\xi),\sigma(\xi))=(\xi^m,\xi^{\frac{m}{2}})$ with $m>1$, the convergence occurs at a polynomial rate.

The distinction between exponential and polynomial convergence to equilibrium stems from the qualitative difference in the dissipative structures of the operators $\Delta \rho$ and $\Delta \rho^m = m \nabla \cdot (\rho^{m-1} \nabla \rho)$, $m>1$. Intuitively, the Laplacian induces a uniform smoothing effect across the domain, which gives rise to a spectral gap in the associated generator (see, for instance, Chen et al. \cite{CKNP21}) and, consequently, may lead to exponential ergodicity. In contrast, in the porous medium case, the effective diffusion coefficient $D(\rho) = m \rho^{m-1}$ degenerates in low-density regions, resulting in slower particle diffusivity and a weaker dissipation mechanism. From an analytical perspective, this degeneracy destroys the spectral gap, and the long-time dynamics are instead captured by alternative techniques, such as those developed in Dareiotis, Gess and Tsatsoulis \cite{DGT20}. In the present work, rather than relying on spectral gap arguments, we analyze the trajectories directly. A key technical tool for distinguishing the corresponding convergence rates is provided later in Lemma~\ref{lemma: lemma B.2 from DGT}.

With the aid of the weight function, the Stratonovich-to-It\^o correction terms arising from the noise structure tend to contribute additional dissipation. However, as will be seen in the proofs of the above results, this contribution will be neglected due to the potential irregularity of the noise coefficient, which renders integrals involving $\log \rho$-type functions ill-defined. Consequently, utilizing only the dissipation from the porous medium term $\Delta \Phi(\rho)$ yields convergence merely at a polynomial rate, consistent with the results in Dareiotis, Gess and Tsatsoulis \cite{DGT20}. To analyse the contribution of the conservative noise, we establish the following corollary. We show that when the noise coefficient is sufficiently regular,  including the linear case $\sigma(\xi)=\xi$, the polynomial convergence rate of the porous medium type Dean--Kawasaki equation can be improved to an exponential rate. We interpret this enhancement as a manifestation of a regularization-by-noise phenomenon. 
\begin{corollary}[Regularization by noise]\label{coro: L1 omega super contraction-intro}
Suppose that $(\Phi,\sigma)$ satisfy Assumptions~\ref{asm: assumptions on non-linear coeffs for contraction estime}, \ref{asm: Assumptions on nu}, and~\ref{asm: assumption on curly A}. Let 
\begin{equation*}
    \Psi_{\sigma}(\xi)=\int^{\xi}_0[\sigma'(\xi')]^2d\xi',\quad \xi\geq 0.
\end{equation*}
Suppose additionally that $\Psi_{\sigma}$ is well-defined and satisfies the lower Lipschitz bound 
\begin{align}\label{eq: assumption on linear growth of Psi sigma}
c|\xi_1-\xi_2|\leq|\Psi_{\sigma}(\xi_1)-\Psi_{\sigma}(\xi_2)|, 
\end{align}
for some $c>0$, uniformly for every $\xi_1,\xi_2\geq0$. Let $(P_t)_{t\ge0}$ denote the semigroup generated by \eqref{SPDE-0}, and let $\mu$ be its invariant measure. Then there exist constants $c_1,c_2\in(0,\infty)$ independent of $t$ such that, for every $t\ge0$,  
\begin{equation*}
    \sup_{\rho_0\in L^2_{x}}\sup_{\|F\|_{\mathrm{Lip}(L^1_{w;x})}\le1}
    \left|P_t F(\rho_0)-\int_{L^1_{w;x}}F(\tilde\rho)\,\mu(d\tilde\rho)\right|
    \le c_1 e^{-c_2 t}. 
\end{equation*}
\end{corollary}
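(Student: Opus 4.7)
The plan is to retrace the supercontraction argument that yields the polynomial convergence in the second part of Theorem~1.2 under Assumptions~\ref{asm: assumptions on non-linear coeffs for contraction estime}, \ref{asm: Assumptions on nu} and \ref{asm: assumption on curly A}, and this time to exploit, rather than discard, the Stratonovich-to-It\^o correction, which under the additional regularity of $\Psi_\sigma$ becomes a genuinely coercive term. Concretely, I would compare two kinetic solutions $\rho^1,\rho^2$ driven by the same noise but starting from different initial data, and run the doubling of variables argument in the weighted space $L^1_{w;x}$. This produces an integral identity for $\mathbb{E}\|\rho^1_t-\rho^2_t\|_{L^1_{w;x}}$ whose right-hand side decomposes into the porous-medium dissipation from $\Delta\Phi$, a martingale term that vanishes in expectation, and a drift coming from rewriting the conservative Stratonovich noise in It\^o form. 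In the polynomial case this drift is simply dropped: for irregular $\sigma$, its formal divergence $\nabla\cdot((F*F)[\sigma'(\rho)]^2\nabla\rho)$ would involve ill-defined expressions such as $\nabla\log\rho$, as discussed in the paragraph preceding the statement.

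Under the present hypothesis, however, $[\sigma'(\xi)]^2=\Psi_\sigma'(\xi)$ and the correction can be rewritten rigorously as the divergence of $(F*F)\nabla\Psi_\sigma(\rho)$. The lower Lipschitz bound~\eqref{eq: assumption on linear growth of Psi sigma} is precisely the statement $\Psi_\sigma'(\xi)\ge c$ a.e., so after the standard smoothing of the sign-function used in the doubling argument and integration by parts against $w$, the correction produces an additional dissipation of size at least $c$ times a weighted flux of $\rho^1-\rho^2$. By the design of $w$ and the Dirichlet boundary data, this flux can be bounded below by a constant multiple of $\|\rho^1_t-\rho^2_t\|_{L^1_{w;x}}$, upgrading the sublinear differential inequality underlying the polynomial rate in \cite{DGT20} to a genuinely linear one of the form
\begin{equation*}
\frac{d}{dt}\mathbb{E}\|\rho^1_t-\rho^2_t\|_{L^1_{w;x}}\le -\lambda\,\mathbb{E}\|\rho^1_t-\rho^2_t\|_{L^1_{w;x}},
\end{equation*}
for some $\lambda>0$ depending only on $c$, the covariance of $\xi^F$ and $w$. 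A Gronwall estimate then yields the weighted $L^1$-supercontraction $\mathbb{E}\|\rho^1_t-\rho^2_t\|_{L^1_{w;x}}\le e^{-\lambda t}\mathbb{E}\|\rho^1_0-\rho^2_0\|_{L^1_{w;x}}$, and the passage from this pointwise-in-initial-data contraction to the stated bound on Lipschitz observables $F$ is identical to the argument used in the exponential part of Theorem~1.2, invoking the unique invariant measure $\mu$ granted by Theorem~1.1.

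The main obstacle, and the point at which the structural assumption on $\Psi_\sigma$ is essential, is to justify the chain-rule identity $[\sigma'(\rho)]^2\nabla\rho=\nabla\Psi_\sigma(\rho)$ for weak kinetic solutions. A priori $\rho$ is too irregular for a pointwise application, so one must work with smooth approximations, identify the correction at the approximate level, and pass to the limit while preserving the coercive lower bound $\Psi_\sigma'\ge c$; the global lower Lipschitz hypothesis on $\Psi_\sigma$ is exactly what makes this limiting procedure compatible with the doubling-of-variables framework. Once this step is secured, the compatibility with the weight $w$, whose construction was already tailored to the correlated-noise structure, and the handling of the Dirichlet boundary condition follow the template used in the proofs of Theorems~1.1 and~1.2.
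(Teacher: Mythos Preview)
Your proposal is correct and follows essentially the same route as the paper: retain the Stratonovich-to-It\^o correction in the doubling-of-variables estimate (which the paper isolates in Corollaries~\ref{corrolary: improved super contraction in porous medium case} and~\ref{coro: L1 omega super contraction}), use the lower Lipschitz bound $\Psi_\sigma'\ge c$ together with the sign of $F_1\Delta w+(c+1)|F_2\cdot\nabla w|$ to extract an additional linear dissipation $-c\int_0^t\|\rho^1_s-\rho^2_s\|_{L^1_{w;x}}\,ds$, apply Gronwall for exponential contraction (Corollary~\ref{prp-improvedrate}), and then repeat the Step~2 argument of Theorem~\ref{thm: existence and uniqueness of invariant measure} to pass to Lipschitz observables (Corollary~\ref{coro: L1 omega super contraction-ergodic}). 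Your identification of the main technical point---that the cutoff-limit passage for the correction term requires $\Psi_\sigma(\rho_i)\in L^1(U\times[0,T])$, which is guaranteed by the well-definedness of $\Psi_\sigma$ combined with the growth bounds in Assumption~\ref{asm: assumptions on non-linear coeffs for contraction estime}---matches the paper's treatment.
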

The additional assumption on the coefficient $\sigma$ ensures that the dissipative contribution arising from the Stratonovich-to-It\^o correction term is well-justified. A representative example is $\sigma(\xi) = \xi$, which corresponds to the fluctuating zero-range process characterized by $\Phi(\xi) = \xi^m$ and $\sigma(\xi) = \xi^{m/2}$ with $m = 2$. Moreover, the coefficient $\sigma$ can also be chosen in a more general or artificial manner, for instance, $\sigma(\xi) = \xi + \frac{1}{2}\sin \xi$ or $\sigma(\xi) = \xi+e^{-\xi}$.

\subsection{Key ideas and technical comments}\label{subsec: key ideas and technical comments}
The ergodicity argument is inspired by the approaches of Dareiotis, Gess and Tsatsoulis \cite{DGT20} and Dong, Zhang and Zhang \cite{DZZ23} and further developed by combining it with the doubling variable method in the kinetic formulation framework, as employed by Fehrman and Gess \cite{FG24} and the first author \cite{Shyam25}. More precisely, we construct a weight function $w: U \to \mathbb{R}_+$, tailored to the specific structure of the Dean--Kawasaki equation, and analyse ergodicity within the weighted Lebesgue space $L^1_{w;x}$.

The fundamental step in establishing ergodicity is to prove a super-contraction property in $L^1_{w;x}$.
Specifically, let $\rho_0 \in L^2_\omega L^2_{x}$, and denote by $\rho(\cdot;\rho_0):[0,T]\to U$ the renormalized kinetic solution of \eqref{SPDE-0} with initial condition $\rho_0$ in the sense of Definition \ref{def: stochastic kinetic solution of DK} below. Under the assumptions in Section~\ref{subsec: assumptions}, we show that there exists a constant $c \in (0,\infty)$, depending only on $d, U, q_0$, and the weight $w$ (but notably independent of $t$), such that for every $t \in [0,T]$,
\begin{equation}\label{super-contraction-intro}
    \sup_{\rho_{1,0},\rho_{2,0} \in L^2_\omega L^2_{x}} 
    \mathbb{E}\big\|\rho(t;\rho_{1,0})-\rho(t;\rho_{2,0})\big\|_{L^1_{w;x}}
    \leq c\, \mathrm{Decay}(t),  
\end{equation}
where $\mathrm{Decay}(t)$ denotes the temporal decay rate, which will be specified later. To achieve this, the weight function $w$ is constructed to satisfy, for every $x \in U$,
\begin{align}\label{weight-conds}
 \begin{cases}
        \Delta w <0 \\
        F_1\Delta w+c|F_2\cdot\nabla w|<0,\\
        \partial_{x_i}w(x)<0\quad \text{for every } i=1,\dots,d,        
    \end{cases}
\end{align}
where $F_1$ and $F_2$ depend on the specific structure of the noise $\xi^F$ which will be specified in Section~\ref{subsec: setup and kinetic formulation} and for constant $c\in(0,\infty)$ that will be specified.

{\bf The super-contraction property.}
The super-contraction property is established through the kinetic formulation, which characterizes the evolution of the renormalized kinetic function $\chi = \mathbbm{1}_{\{0 < \xi < \rho(t; \rho_0)\}}$. The proof employs the doubling of variables method utilized in Fehrman and Gess \cite{FG24}, combined with a weight function $w$. As a representative example, we take
$$
\Phi(\rho) = \rho, \quad \text{and} \quad \sigma(\rho) = \sqrt{\rho}.
$$
Let $\rho_1 = \rho(\cdot; \rho_{1,0})$ and $\rho_2 = \rho(\cdot; \rho_{2,0})$ denote two stochastic kinetic solutions of \eqref{SPDE-0} with initial conditions $\rho_{1,0}$ and $\rho_{2,0}$ respectively, and $\chi_1 = \mathbbm{1}_{\{0 < \xi < \rho_1\}}$, $\chi_2 = \mathbbm{1}_{\{0 < \xi < \rho_2\}}$ denote the corresponding kinetic functions. Using the elementary property of indicator functions, one observes that
\begin{align*}
\int_U |\rho_1 - \rho_2| \omega(x)
&= \int_U \left( \int_{\mathbb{R}} |\chi_1 - \chi_2|^2 \, d\xi \right) \omega(x) \, dx \\
&= \int_U \left( \int_{\mathbb{R}} \chi_1 + \chi_2 - 2\chi_1 \chi_2 \, d\xi \right) \omega(x) \, dx.
\end{align*}
Taking the kinetic formulation into account and applying It\^o's product rule, one can expand the expression $\chi_1 + \chi_2 - 2\chi_1 \chi_2$ analogously to Fehrman and Gess \cite{FG24}. However, in contrast to Fehrman and Gess \cite{FG24} and the first author \cite{Shyam25}, the presence of the weight function $w$ leads to terms whose sign can be quantified. 
In the case of the classical Dean--Kawasaki equation $(\Phi(\xi),\sigma(\xi))=(\xi,\sqrt{\xi})$ the contributions read
\begin{align*}
I_t^{wgt}
&= \int_0^t \int_U \left( \nabla \rho_1 - \nabla \rho_2 + \frac{1}{8} F_1 \rho_1^{-1} \nabla \rho_1 - \frac{1}{8} F_1 \rho_2^{-1} \nabla \rho_2 \right) \cdot \nabla w(x) \, \sgn(\rho_2 - \rho_1) \, dx \, ds,
\end{align*}
and
\begin{align}\label{negative-contrib-3}
I_t^{flux}
&= \int_0^t \int_U w(x) \left( \nabla \cdot \nu(\rho_1) - \nabla \cdot \nu(\rho_2) \right) \sgn(\rho_2 - \rho_1) \, dx \, ds \nonumber\\
&= \sum_{i=1}^d \int_0^t \int_U \partial_{x_i} w(x) \, |\nu_i(\rho_1) - \nu_i(\rho_2)| \, dx \, ds.
\end{align}
Here $I_t^{wgt}$ arises from the diffusion and the Stratonovich-to-It\^o correction, while $I_t^{flux}$ originates from the flux term $-\nabla \cdot (\nu(\rho))$.

Concerning the term $I_t^{wgt}$, the monotonicity of the identity function implies that the first two terms can be formally rewritten as
\begin{align}\label{negative-contrib-1}
\int_0^t \int_U (\nabla \rho_1 - \nabla \rho_2) \cdot \nabla w(x) \, \sgn(\rho_2 - \rho_1) \, dx \, ds
&= -\int_0^t \int_U \nabla |\rho_1 - \rho_2| \cdot \nabla w(x) \, dx \, ds \notag\\
&= \int_0^t \int_U |\rho_1 - \rho_2| \, \Delta w(x) \, dx \, ds.
\end{align}
To ensure that this contribution is negative, we require
\begin{equation}\label{weight-condition-1}
(\Delta w)(x) < 0, \quad \forall x \in U.
\end{equation}
A similar argument applies to the logarithmic term, since $\xi \mapsto \frac{1}{8} F_1 \log \xi$ is increasing. Thus,
\begin{align}\label{negative-contrib-2}
\frac{1}{8}\int_0^t \int_U F_1 (\nabla \log \rho_1 - \nabla \log \rho_2) \cdot \nabla w(x) \, &\sgn(\rho_2 - \rho_1) \, dx \, ds\notag\\
=& -\frac{1}{8}\int_0^t \int_U F_1 \nabla |\log \rho_1 - \log \rho_2| \cdot \nabla w(x) \, dx \, ds \notag\\
=& \frac{1}{8}\int_0^t \int_U |\log \rho_1 - \log \rho_2| \, \nabla \cdot (F_1 \nabla w(x)) \, dx \, ds.
\end{align}
This yields the additional condition
\begin{equation}\label{weight-condition-2}
\nabla \cdot (F_1 \nabla w(x)) < 0.
\end{equation}
We remark that, due to the potential lack of integrability of $\log \rho_1$ and $\log \rho_2$, the negative contribution in \eqref{negative-contrib-2} will be dropped in the rigorous proof when the truncation near the zero and infinity values of the solution are still present, ensuring that the integral is well-defined.
Furthermore, to obtain a negative contribution from the flux term $I_t^{flux}$, we impose
\begin{equation}\label{weight-condition-3}
\partial_{x_i} w(x) < 0, \quad \text{for every } i = 1, \dots, d.
\end{equation}
Combining \eqref{weight-condition-1}, \eqref{weight-condition-2}, and \eqref{weight-condition-3}, we obtain the set of conditions \eqref{weight-conds} for the admissible weight function $w$.

In summary, compared with Fehrman and Gess \cite{FG24}, the presence of the weight function yields additional negative contributions \eqref{negative-contrib-1} and \eqref{negative-contrib-3} in the weighted $L^1_x$-contraction property. We will utilize these results to derive bounds on the difference between two solutions corresponding to distinct initial data, with the bounds exhibiting decay in time.

{\bf Construction of the weight function.}
Let $e=\frac{1}{\sqrt{d}}(1,\ldots,1)$ be a $d$-dimensional unit vector. We construct the weight function as  
\begin{align}\label{eq: weight function intro}
w(x) = -\exp(\alpha\, x\cdot e) + C,
\end{align}
where $\alpha>0$ is chosen sufficiently large depending on the noise structure $F_1$, and $C>0$ is a constant ensuring the positivity of $w$. We refer the reader to Proposition~\ref{prop: choice of weight function in informal proof} for further details and for the verification of conditions~\eqref{weight-conds}.

Since $U$ is bounded, $\exp(\alpha\, x\cdot e)$ admits both a positive lower bound and a finite upper bound. Hence, by choosing $C>0$ appropriately, we can guarantee that 
$$
0 < c_{\mathrm{low}} \leq w(x) \leq c_{\mathrm{up}} < \infty, \quad \text{for every } x \in U.
$$
Consequently,
\begin{align}\label{eq: equivalence between l1 norm and weighted l1 norm}
c_{\mathrm{low}} \|f\|_{L^1_x} \leq \|f\|_{L^1_{w;x}} \leq c_{\mathrm{up}} \|f\|_{L^1_x}, \quad \text{for all } f \in L^1_x,
\end{align}
which shows the equivalence between the standard $L^1_x$-norm and the weighted $L^1_{w;x}$-norm.

Furthermore, we emphasize that there does not exist an admissible weight function on either the torus $\mathbb{T}^d$ or 
the whole space $\mathbb{R}^d$.
We take the case of $\mathbb{T}^d$ for 
illustration.
Since a function on $\mathbb{T}^d$ cannot be strictly decreasing, 
condition \eqref{weight-condition-3} cannot be satisfied. Moreover, integrating 
over space yields
$$
\int_{\mathbb{T}^d} \Delta w(x)\,dx
= \int_{\mathbb{T}^d} \nabla\cdot(F_1\nabla w(x))\,dx
= 0.
$$
This contradicts \eqref{weight-condition-1} and \eqref{weight-condition-2}. 
A similar argument applies to the case of $\mathbb{R}^d$. 
This observation indicates that the present approach is only valid 
for bounded domains.

{\bf Ergodicity.}
Based on the choice of the weight function, in the classical Dean--Kawasaki case we obtain the existence of constants $ c,q_0\in(0,\infty)$ such that 
\begin{align}\label{contraction-1-intro}
c_{\mathrm{low}}\mathbb{E}\|\rho(t;\rho_{1,0})-\rho(t;\rho_{2,0})\|_{L^1_{w;x}}
\leq c_{\mathrm{up}}\mathbb{E}\|\rho_{1,0}-\rho_{2,0}\|_{L^1_{w;x}}
-& c\mathbb{E}\int_0^t\|\rho(r;\rho_{1,0})-\rho(r;\rho_{2,0})\|_{L^1_{w;x}}\,dr\notag\\
-& c\mathbb{E}\int_0^t\|\rho(r;\rho_{1,0})-\rho(r;\rho_{2,0})\|^{q_0+1}_{L^{q_0+1}_{w;x}}\,dr.
\end{align}
We note that the negative contribution 
$\mathbb{E}\int_0^t\|\rho(r;\rho_{1,0})-\rho(r;\rho_{2,0})\|_{L^1_{w;x}}\,dr$
arises from the specific choice $\Phi(\rho)=\rho$, and it dominates the right-hand side of \eqref{contraction-1-intro} whenever $|\rho(r;\rho_{1,0})-\rho(r;\rho_{2,0})|<1$. By neglecting the last term 
$\mathbb{E}\int_0^t\|\rho(r;\rho_{1,0})-\rho(r;\rho_{2,0})\|^{q_0+1}_{L^{q_0+1}_{w;x}}\,dr$ 
and applying Gronwall's lemma, we deduce
\begin{align}\label{decayintime-1-intro}
\mathbb{E}\|\rho(t;\rho_{1,0})-\rho(t;\rho_{2,0})\|_{L^1_{w;x}}
\leq c_1 e^{-c_2t}.
\end{align}

In the porous medium Dean--Kawasaki case $(\Phi(\xi),\sigma(\xi))=(\xi^m,\xi^{\frac{m}{2}})$ for $m>1$, the contraction property \eqref{contraction-1-intro} is replaced by
\begin{align}\label{contraction-2-intro}
c_{\mathrm{low}}\mathbb{E}\|\rho(t;\rho_{1,0})-\rho(t;\rho_{2,0})\|_{L^1_{w;x}}
\leq c_{\mathrm{up}}\mathbb{E}\|\rho_{1,0}-\rho_{2,0}\|_{L^1_{w;x}}
-& c\mathbb{E}\int_0^t\|\rho(r;\rho_{1,0})-\rho(r;\rho_{2,0})\|^m_{L^m_{w;x}}\,dr\notag\\
-& c\mathbb{E}\int_0^t\|\rho(r;\rho_{1,0})-\rho(r;\rho_{2,0})\|^{q_0+1}_{L^{q_0+1}_{w;x}}\,dr,
\end{align}
from which one obtains a polynomial decay rate in time instead of the exponential decay in \eqref{decayintime-1-intro},
\begin{align}\label{decayintime-2-intro}
\mathbb{E}\|\rho(t;\rho_{1,0})-\rho(t;\rho_{2,0})\|_{L^1_{w;x}}
\leq ct^{-1/q_0}.
\end{align}
This follows by applying a technical result stated in Lemma~\ref{lemma: lemma B.2 from DGT}.

Considering the cases \eqref{decayintime-1-intro} and \eqref{decayintime-2-intro}, we conclude the decay rate $\mathrm{Decay}(t)$ as stated in the super-contraction property \eqref{super-contraction-intro}. 
Using \eqref{super-contraction-intro}, we establish ergodicity by proving the existence and uniqueness of invariant measures. 
Furthermore, we conclude that the convergence of the laws of the classical Dean--Kawasaki equation 
$(\Phi(\xi),\sigma(\xi))=(\xi,\sqrt{\xi})$ is exponentially fast, 
whereas in more general porous medium cases, the convergence occurs at a polynomial rate. 

To improve the convergence rate toward equilibrium, we aim to exploit the contribution of the noise structure. Specifically, when the coefficient $\sigma$ is sufficiently regular, the term corresponding to $\log \rho_1 - \log \rho_2$ in \eqref{negative-contrib-2} is actually the well-defined expression
\begin{align*}
\bigl|\Psi_{\sigma}(\rho_1)-\Psi_{\sigma}(\rho_2)\bigr|
	= \left|\int_{\rho_2}^{\rho_1} [\sigma'(\xi')]^2 \, d\xi'\right|.
\end{align*}
Under the growth assumption \eqref{eq: assumption on linear growth of Psi sigma} on $\Psi_\sigma$,
the corresponding term in \eqref{negative-contrib-2} is well-defined and provides an additional dissipative effect. By exploiting this extra negative contribution and applying Gronwall's inequality, we obtain exponential convergence toward equilibrium. This leads to an enhanced convergence rate for the porous medium type Dean--Kawasaki equation, yielding the regularization-by-noise result stated in Corollary~\ref{coro: L1 omega super contraction-intro}.

\subsection{Comments on the literature}\label{subsec: comments on literature}

\ 

{\bf Fluctuating hydrodynamics. }
The study of Dean--Kawasaki-type equations is closely connected to the theories of fluctuating hydrodynamics (FHD) and macroscopic fluctuation theory (MFT). MFT provides a unified framework for describing systems driven far from equilibrium, extending and refining the classical linear response theories valid only near equilibrium, see Bertini et al. \cite{BDGJL}, Derrida \cite{Derrida}.
At its core lies an ansatz formulated in terms of large deviation principles for interacting particle systems. Parallel to this perspective, the FHD approach aims to model microscopic fluctuations in accordance with the principles of statistical mechanics and non-equilibrium thermodynamics. Within this framework, one postulates conservative SPDEs that capture the essential fluctuation features of non-equilibrium systems, see Landau and Lifshitz \cite{LL87} and Spohn \cite{HS}. The central ansatz of MFT can then be viewed as the zero-noise large deviation limit of such conservative SPDEs. A typical example within FHD is the Dean--Kawasaki equation, which effectively characterizes the fluctuation behaviour of mean-field interacting particle systems.

{\bf Kinetic formulation.}
The kinetic formulation was first developed in the partial differential equation (PDE) literature and builds off the idea of entropy solutions. 
To the best of the authors' knowledge the formulation was introduced in the mid-nineties by Lions, Perthame and Tadmor \cite{lions1994kinetic} in the context of scalar conservation laws.
Relevant extensions to  degenerate parabolic-hyperbolic PDEs were given by Chen and Pertame \cite{chen2003well}, Bendahmane and
Karlsen \cite{bendahmane2004renormalized}, and Karlsen and Riseboro \cite{karlsen2000uniqueness}.

The first work extending the kinetic solution framework to the stochastic setting was in 2010 by Debussche and Vovelle \cite{DV10}, who extended the deterministic results of Lions, Perthame and Tadmor \cite{lions1994kinetic} to the case of scalar conservation laws with stochastic forcing.
The results were subsequently extended by Hofmanov\'a \cite{hofmanova2013degenerate} to the degenerate parabolic case.

{\bf The Dean--Kawasaki equations and conservative SPDEs. }
The Dean--Kawasaki equation was first introduced by Dean \cite{D96} and Kawasaki \cite{K98}. However, the model remained at a formal level until Konarovskyi, Lehmann and von Renesse \cite{KLR19}  established a rigorous martingale framework, thereby providing a mathematically justified formulation. Subsequently, Konarovskyi, Lehmann and von Renesse \cite{KLR20} extended the well-posedness result to the case of nonlocal interactions by employing a Girsanov transform.
Further studies such as Konarovskyi and M\"uller \cite{KF24}, Dello Schiavo and Konarovskyi \cite{DK25} and Dello Schiavo \cite{D25} investigated additional properties of the Dean--Kawasaki equation driven by space-time white noise. More recently, M\"uller, von Renesse and Zimmer \cite{FMJ25} extended the martingale approach to the Vlasov-Fokker-Planck-type Dean--Kawasaki equation.  

Motivated by the theory of fluctuating hydrodynamics, the Dean--Kawasaki equation driven by spatially correlated noise has been extensively studied. The inclusion of correlated noise reflects more realistic mesoscopic features and reveals additional analytical structures of the model. To the authors' understanding, these works rely heavily on techniques developed for stochastic conservation laws. Debussche and Vovelle \cite{DV10} studied the Cauchy problem for stochastic conservation laws through the concept of kinetic solutions. This framework was later extended to parabolic-hyperbolic stochastic PDEs with conservative noise by Gess and Souganidis \cite{GS17}, Fehrman and Gess \cite{FG19}, and Dareiotis and Gess \cite{DG20}. These methodologies have since been adapted to the analysis of Dean--Kawasaki-type equations. In the case of local interactions, Fehrman and Gess \cite{FG24} established the well-posedness of function-valued solutions to the Dean--Kawasaki equation with correlated noise. Building on this framework, they further investigated small-noise large deviations in Fehrman and Gess \cite{FG23}, aiming to rigorously connect FHD with MFT.  

More recently, Clini and Fehrman \cite{CF23} proved a central limit theorem for the nonlinear Dean--Kawasaki equation with correlated noise, while Gess, the second author, and Zhang \cite{GWZ24} derived higher-order fluctuation expansions for Dean--Kawasaki-type systems.
For models with Dirichlet boundary conditions, the first author established well-posedness, large deviation, and fluctuation results in \cite{Shyam25,Shyam25-fluc}. For models with nonlocal interactions, Wang, the second author, and Zhang \cite{WWZ22}, as well as the second author and Zhang \cite{WZ24}, analysed the well-posedness and large deviations for Dean--Kawasaki equations with singular interactions, with applications to the fluctuating Ising-Kac-Kawasaki model. Later on, the second author \cite{W25} investigated the nonlinear fluctuations of the fluctuating Ising--Kac--Kawasaki model.  For Keller--Segel-type interactions, Martini and Mayorcas \cite{AA25,AA24} established well-posedness and large deviation principles for an additive-noise approximation of the Keller-Segel-type Dean--Kawasaki equation. Additionally, Hao, the second author, and Zimmer \cite{HWJ25} established a strong well-posedness result for the Vlasov-Fokker-Planck-type Dean--Kawasaki equation with correlated noise.

Other well-posedness results for the Dean--Kawasaki equation with correlated noise include the recent works of Fehrman and Gess \cite{fehrman2024conservative} who consider the Dean--Kawasaki equation on the whole space $\mathbb{R}^d\times(0,\infty)$, and the work of Fehrman \cite{fehrman2025stochastic} who considers the Dean--Kawasaki equation on bounded domains with homogeneous Neumann boundary data.
Finally, we mention that under additional mollification of the square root, Djurdjevac, Kremp and Perkowski \cite{djurdjevac2022weak} and Djurdjevac, Ji and Perkowski \cite{djurdjevac2025weak} show the well-posedness of strong solutions.
They also make a connection with MFT by establishing a quantitative bound on the weak error between the approximating SPDE and the empirical measure of the corresponding particle system.

{\bf Ergodicity for stochastic conservation laws. }
We also mention related literature concerning ergodicity and long-time behavior of stochastic conservation laws and conservative SPDEs. Debussche and Vovelle \cite{DV15} established ergodicity for stochastic conservation laws on the torus driven by additive noise. Subsequently Gess and Souganidis \cite{GS17} applied an analogue of the averaging lemma, combined with the specific Gaussian structure of the noise, to investigate the regularity and long-time behaviour of conservation law equations with stochastic fluxes. This framework was later extended by Gess and Souganidis \cite{GS17-SPA} to conservative SPDEs of porous medium type. Concerning ergodicity for equations on bounded domains with Dirichlet boundary conditions, Dareiotis, Gess and Tsatsoulis \cite{DGT20} established ergodicity for the stochastic porous medium equation.
Building upon this, Dong, Zhang and Zhang \cite{DZZ23} employed the methodology developed by Dareiotis, Gess and Tsatsoulis \cite{DGT20} to prove ergodicity for stochastic conservation laws driven by multiplicative noise.

\subsection{Structure of the paper}\label{structure of the paper}
This paper is organized as follows. In Section \ref{sec: preliminaries section}, we introduce the basic notations and provide the precise definitions of the solution concepts.
Section \ref{sec: rigorous proof of super contraction subsection section} is devoted to establishing the super-contraction property. 
Finally, the ergodicity results are analysed in Section \ref{sec: ergodicity}.

\section{Preliminaries}\label{sec: preliminaries section}
We begin by introducing some of the notation that will be employed in the paper.
\subsection{Notation}\label{subsec: notation}
Let $T>0$. Let $(\Omega,\mathcal{F},\mathbb{P},\{\mathcal{F}_t\}_{t\in [0,T]},(\{\beta_k(t)\}_{t\geq0})_{k\in\mathbb{N}})$ be a stochastic basis. Without loss of generality, we assume that the filtration $\{\mathcal{F}_t\}_{t\in [0,T]}$ is complete and that $\{\beta_k(t)\}_{t\geq0}$, $k\in\mathbb{N}$, are independent $\{\mathcal{F}_t\}_{t\in [0,T]}$-Wiener processes taking values in $\mathbb{R}^d$. Expectation with respect to the probability measure $\mathbb{P}$ is denoted by $\mathbb{E}$.

Recall that $U$ is a $C^2$-regular, bounded domain in $\mathbb{R}^d$. For every $p \in [1, \infty]$, we denote by $\|\cdot\|_{L^p_x}$ the norm in the Lebesgue space $L^p(U)$. 
Let $C^\infty(U \times (0, \infty))$ be the space of infinitely differentiable functions on $U \times (0, \infty)$, and $C_c^\infty(U \times (0, \infty))$ its subspace consisting of compactly supported functions.
For a non-negative integer $a$, we write $H^a(U):= W^{a,2}(U)$ where $W^{a,2}(U)$ denotes the usual Sobolev space on the domain $U$ with integrability $2$.
The space $H^\alpha_0(U)$ is defined to be the closure of compactly supported smooth functions $C_c^\infty(U)$ in the space $H^a(U)$.
For every $p,q,r\in[1,\infty)$, we denote the space $L^p(\Omega;L^q([0,T];L^r(U)))$ using the shorthand $L^p_\omega L^q_t L^r_x$, where the subscripts denote the variables being integrated.
To emphasise the initial condition, we will sometimes denote solutions of equation \eqref{SPDE-0} at time $t\geq0$ by 
\begin{equation*}
    \rho(t;\rho_0):U\to\mathbb{R}.
\end{equation*}

\begin{definition}
(Weighted Lebesgue space) For a non-negative integrable weight $w:U\to\mathbb{R}$ and $p\in\mathbb{N}$, the weighted $L^p(U)$-space, denoted by $L^p_{w;x}$, consists of all measurable functions $f:U\to\mathbb{R}$ satisfying
\begin{equation*}
    \|f\|_{L^p_{w;x}}:=\int_U |f(x)|^pw(x)\,dx<\infty.
\end{equation*}
\end{definition}

We also define the function spaces 
\begin{equation*}
    B_b(L^1_{w;x}):=\{f:L^1_{w;x}\to\mathbb{R}: f\, \text{is bounded and measurable}\},
\end{equation*}
\begin{equation*}
    C_b(L^1_{w;x}):=\{f:L^1_{w;x}\to\mathbb{R}: f\, \text{is continuous, bounded and measurable}\}.
\end{equation*}

We use the notation $\lesssim$ to mean \say{less than or equal to, up to a constant}.
\subsection{Setup and kinetic formulation}\label{subsec: setup and kinetic formulation}
We begin by defining the noise in \eqref{SPDE-0}.
\begin{definition}[The noise {$\xi^F$}]\label{def: definition of the noise}
    For a sequence of continuously differentiable functions $F:=\{f_k : U\to\mathbb{R}\}_{k\in\mathbb{N}}$ and the independent Brownian motions $\{\beta_k:[0,T]\to\mathbb{R}^d\}_{k\in\mathbb{N}}$ introduced in Section \ref{subsec: notation}, the noise $\xi^F:U\times[0,T]\to\mathbb{R}^d$ is defined by 
\begin{equation*}
\xi^F(x,t):=\sum_{k=1}^{\infty} f_k(x)\beta_k(t).     
\end{equation*}
\end{definition}

By converting the equation \eqref{SPDE-0} into an equation with It\^o noise, we see that it is convenient to define the three quantities related to the spatial coefficients of the noise $F$,
\begin{align}
        &F_1:U\to\mathbb{R}
\quad \text{defined by} \quad F_1(x):=\sum_{k=1}^\infty f_k^2(x),\nonumber\\
&F_2:U\to\mathbb{R}^d
\quad \text{defined by} \quad F_2(x):=\sum_{k=1}^\infty f_k(x)\nabla f_k(x)=\frac{1}{2}\nabla F_1,\label{eq: def of F_2}\\
&F_3:U\to\mathbb{R}
\quad \text{defined by} \quad F_3(x):=\sum_{k=1}^\infty |\nabla f_k(x)|^2.\nonumber
\end{align}
We assume that the following assumption on the noise holds for every statement in the paper and will not repeatedly reference it.
\begin{assumption}\label{asm: assumption on spatial components of the noise}
    Assume that the spatial components of the noise $\{f_k\}_{k\in\mathbb{N}}$ are such that $\{F_i\}_{i=1,2,3}$ are continuous on $U$.
    We further assume that the noise is non-degenerate in $U$ in the sense that
    \begin{equation}\label{eq: Assumption that F1 is strictly positive}
        \inf_{x\in U}F_1(x)>0.
    \end{equation}
\end{assumption}

We consider the non-degeneracy condition \eqref{eq: Assumption that F1 is strictly positive} on the noise to be a very mild condition, and is satisfied if for example we assume that the first noise coefficient is non-degenerate $f_1=1$.

Using the Stratonovich-to-It\^o conversion, the generalised Dean--Kawasaki equation \eqref{SPDE-0} with It\^o noise is 
\begin{align*}
    \partial_t\rho=\Delta\Phi (\rho) -\nabla\cdot (\sigma(\rho) \dot{\xi}^F + \nu(\rho)) + \frac{1}{2}\nabla\cdot(F_1[\sigma'(\rho)]^2\nabla\rho + \sigma'(\rho)\sigma(\rho)F_2).
\end{align*}
As is well known by now, see for example Chapter 3 of \cite{FG24} or Section 5.4 of the work of the first author \cite{Shyam25-fluc}, the kinetic function 
\begin{equation}\label{eq: kinetic function}
    \chi(x,\xi,t)=\mathbbm{1}_{\{0<\xi<\rho(x,t)\}} 
\end{equation}
solves distributionally the equation
\begin{multline}\label{eq: distributional kinetic equation}
    \partial_t\chi=\nabla\cdot(\delta_0(\xi-\rho)\Phi'(\xi)\nabla\rho)+\frac{1}{2}\nabla\cdot(\delta_0(\xi-\rho)\left(F_1(\sigma'(\xi))^2\nabla\rho + \sigma(\xi)\sigma'(\xi)F_2\right))+\partial_\xi q\\
    -\frac{1}{2}\partial_\xi(\delta_0(\xi-\rho)\left(\sigma(\xi)\sigma'(\xi)\nabla\rho\cdot F_2+\sigma^2(\xi)F_3\right))-\delta_0(\xi-\rho)\nabla\cdot(\sigma(\rho) \dot{\xi}^F)
    -\delta_0(\xi-\rho)\nabla\cdot\nu(\rho),
\end{multline}
where $q$ denotes the kinetic measure, a non-negative, locally finite measure on $U\times(0,\infty)\times[0,T]$ satisfying that 
$$
q\geq\delta_0(\xi-\rho)\Phi'(\xi)|\nabla\rho|^2,
$$
and for every $\psi\in C_c^\infty(U\times(0,\infty))$,
    \begin{align*}
        (\omega,t)\in(\Omega,[0,T])\to\int_{\mathbb{R}}\int_0^t\int_U\psi(x,\xi)\, q(dx,d\xi,dt)(\omega)=:\int_{\mathbb{R}}\int_0^t\int_U\psi(x,\xi)\, dq(x,\xi,t)(\omega)
    \end{align*}
 \normalsize
    is $\{\mathcal{F}_t\}_{t\geq0}$-predictable.

Equation \eqref{eq: distributional kinetic equation} forms the basis of a kinetic solution, where the distributional equality is made rigorous by integrating against a test function.
To capture the boundary data of the solution we introduce the harmonic extension of $\rho_b$, where the boundary data $\rho_b$ is assumed to satisfy Assumption \ref{asm: assumption on initial data and boundary condition} below. 
That is, define $h_{\rho_b}$ to be the weak solution of
     \begin{equation}\label{eq: equation for PDE tilde h}
    \begin{cases}
            -\Delta h_{\rho_b} = 0 & \text{on } U,\\
    h_{\rho_b}=\rho_b& \text{on }\partial U.
    \end{cases}
\end{equation}
Well-posedness of $H^1(U)$-valued weak solutions of $h_{\rho_\beta}$ follows from the fact that the boundary $\partial U$ of our domain is assumed to be $C^2$-regular and the fact that $\rho_b$ is smooth.

\begin{definition}[Kinetic solution of the Dean--Kawasaki equation with correlated noise \eqref{SPDE-0}]\label{def: stochastic kinetic solution of DK}
    Let $\rho_0\in L^2_{\omega}L^2_x$ be a non-negative $\mathcal{F}_0$-measurable initial condition.
A stochastic kinetic solution of the generalised Dean--Kawasaki equation \eqref{SPDE-0} is a non-negative, almost surely continuous $L^1_x$-valued $\{\mathcal{F}_t\}_{t\geq 0}$-predictable process $\rho\in L^1(\Omega\times[0,T];L^1_x)$ that satisfies
\begin{enumerate}
    \item Integrability of flux: We have 
    \begin{equation*}
        \sigma(\rho)\in L^2(\Omega;L^2(U\times[0,T])) \hspace{15pt} \text{and}\hspace{15pt} \nu(\rho)\in L^1(\Omega;L^1(U\times[0,T];\mathbb{R}^d)).
    \end{equation*}
    \item Boundary condition, local regularity of solution: For $h_{\rho_b}$ the weak solution of equation \eqref{eq: equation for PDE tilde h}, for  each $k\in\mathbb{N}$ we have
     \begin{equation}\label{eq: local regularity property of stochastic kinetic solution}
         \left((\Phi(\rho)\wedge k)\vee 1/k\right)-\left((h_{\rho_b}\wedge k)\vee 1/k\right)\in L^2(\Omega;L^2([0,T];H^{1}_0(U))).
     \end{equation}
    Furthermore, there exists a kinetic measure $q$ that satisfies:
    \item Regularity: Almost surely, in the sense of non-negative measures,
    \begin{equation}\label{eq: bound for kinetic measure}
        \delta_0(\xi - \rho)\Phi'(\xi)|\nabla \rho|^2\leq q \hspace{5pt} \text{on}\hspace{5pt}U\times(0,\infty)\times[0,T].
    \end{equation}
    \item Vanishing at infinity: We have that
    \begin{equation}\label{eq: decay of kinetic measure at infinity}
        \lim_{M\to\infty}\mathbb{E}\left[q(U\times[M,M+1]\times[0,T])\right]=0.
    \end{equation}
    \item The kinetic equation:
    For every $\psi\in C_c^\infty(U\times(0,\infty))$ and every $t\in(0,T]$, almost surely,
\begin{align}\label{eq: kinetic equation}
&\int_{\mathbb{R}}\int_{U}\chi(x,\xi,t)\psi(x,\xi)\,dx\,d\xi=\int_{\mathbb{R}}\int_{U}\chi(x,\xi,t=0)\psi(x,\xi)\,dx\,d\xi \nonumber\\
&-\int_0^t \int_{U}\left(\Phi'(\rho)\nabla\rho+\frac{1}{2}F_1[\sigma'(\rho)]^2\nabla\rho +\frac{1}{2} \sigma'(\rho)\sigma(\rho)F_2\right)\cdot\nabla\psi(x,\xi)|_{\xi=\rho}\,dx\,ds\nonumber\\
    &-\int_{\mathbb{R}}\int_0^t\int_{U}\partial_\xi\psi(x,\xi)\,dq +\frac{1}{2}\int_0^t \int_{U}\left(
    \sigma'(\rho)\sigma(\rho)\nabla\rho\cdot F_2 + \sigma(\rho)^2F_3 \right)\partial_\xi\psi(x,\rho)\,dx\,ds\nonumber\\
    & -\int_0^t \int_{U}\psi(x,\rho)\nabla\cdot (\sigma(\rho) \,d{\xi}^F)\,dx -        \int_0^t \int_{U}\psi(x,\rho)\nabla\cdot\nu(\rho)\,dx\,ds.
\end{align}
\end{enumerate}
\end{definition}

The following well-posedness result was obtained by the first author in \cite{Shyam25}.

\begin{theorem}[Well-posedness of the generalised Dean--Kawasaki equation with correlated noise \eqref{SPDE-0}]\label{thm: standard well posedness thm from Popat25}
    Suppose that the noise coefficients $F$ satisfy Assumption \ref{asm: assumption on spatial components of the noise}, the non-linear coefficients $(\Phi,\sigma)$ satisfy either Assumption \ref{asm: classic-DK} or \ref{asm: assumptions on non-linear coeffs for contraction estime} as given in Section \ref{subsec: assumptions} below, $\nu$ satisfies Assumption \ref{asm: Assumptions on nu}, and the initial and boundary data satisfy Assumption \ref{asm: assumption on initial data and boundary condition}.
    
    Then there there exists a unique stochastic solution $\rho$ of equation \eqref{SPDE-0} in the sense of Definition \ref{def: stochastic kinetic solution of DK}.
    In particular, the solution is almost surely continuous in time $\rho\in C([0,T];L^1_x)$.

Finally, any two kinetic solutions $\rho_1,\rho_2$ of \eqref{SPDE-0} with initial data $\rho_{1,0}$ and $\rho_{2,0}$ respectively satisfy $\mathbb{P}$-almost surely the contraction
\begin{equation}\label{eq: standard l1 contraction}
    \sup_{t\in[0,T]}\|\rho_1(\cdot,t)-\rho_2(\cdot,t)\|_{L^1_x}\leq \|\rho_{1,0}-\rho_{2,0}\|_{L^1_x}.
\end{equation}
This illustrates that the solution map $\rho_0\mapsto \rho(\cdot;\rho_0)$ 
is continuous from $L^1_x$ to $L^1(U\times[0,T])$.
\end{theorem}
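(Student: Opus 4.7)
The plan is to follow the three-step scheme that has become standard for kinetic solutions of degenerate parabolic SPDE with conservative noise, adapted to the square-root and Dirichlet setting as in \cite{FG24, Shyam25}. First I would regularize the problem: replace $\sigma$ by a smooth, strictly bounded-below approximation $\sigma^\varepsilon$, mollify the noise coefficients $\{f_k\}$ with a cutoff at level $k\le N$, and add a small viscosity $\varepsilon\Delta$. This yields a non-degenerate quasilinear SPDE with Lipschitz coefficients whose classical well-posedness in $L^2_\omega L^2_t H^1_x$ follows from the standard variational theory (Pardoux/Krylov--Rozovskii), once the boundary data is accommodated by subtracting the harmonic lift $h_{\rho_b}$ from \eqref{eq: equation for PDE tilde h} and working with $\Phi(\rho^\varepsilon)-h_{\rho_b}\in H^1_0(U)$.

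The second step is to derive $\varepsilon$-uniform a priori estimates. Testing the approximate equation against $\Phi(\rho^\varepsilon)-h_{\rho_b}$ and using the Stratonovich-to-It\^o correction produces the usual entropy--entropy-dissipation inequality, giving a uniform bound on $\sigma(\rho^\varepsilon)$ in $L^2_\omega L^2_{t,x}$ and on $|\nabla\Phi(\rho^\varepsilon)|\chi_{\{\rho^\varepsilon\le M\}}$ in $L^2$. Vanishing at infinity \eqref{eq: decay of kinetic measure at infinity} is obtained from the entropy inequality applied to the truncated convex entropy $\xi\mapsto(\xi-M)_+^2$, and the non-negativity is preserved by the approximation since $\sigma^\varepsilon(0)=0$. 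The local regularity statement \eqref{eq: local regularity property of stochastic kinetic solution} for truncations $(\Phi(\rho^\varepsilon)\wedge k)\vee(1/k)$ follows by the standard chain-rule computation because these truncations share the boundary trace of the lifted data up to additive constants.

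For the passage to the limit I would pass through the kinetic formulation. The estimates above yield tightness of the laws of $\chi^\varepsilon=\mathbbm{1}_{\{0<\xi<\rho^\varepsilon\}}$ and of the approximate kinetic measures $q^\varepsilon$ (which are uniformly bounded in expectation on compacts in $\xi$ thanks to \eqref{eq: bound for kinetic measure} and \eqref{eq: decay of kinetic measure at infinity}). After a Skorokhod/Jakubowski selection one obtains a limit $(\rho,q)$ in the space of measures satisfying \eqref{eq: kinetic equation} distributionally; lower semicontinuity gives \eqref{eq: bound for kinetic measure}, and the martingale representation theorem recovers the conservative noise. The integrability of $\nu(\rho)$ follows from the polynomial growth assumed in Assumption~\ref{asm: Assumptions on nu} together with the uniform $L^p$ bounds on $\rho^\varepsilon$ coming from the entropy estimate.

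The main obstacle, and the heart of the argument, is uniqueness together with the $L^1$-contraction \eqref{eq: standard l1 contraction}. I would carry out the doubling-of-variables argument of \cite{FG24, Shyam25}: given two solutions $\rho_1,\rho_2$ with kinetic functions $\chi_1,\chi_2$, I form
\begin{equation*}
\int_U\!\!\int_{\mathbb{R}} (\chi_1+\chi_2-2\chi_1\chi_2)\,\psi_\delta(x,\xi)\,d\xi\,dx
\end{equation*}
for a suitable approximation of the identity $\psi_\delta$, apply It\^o's product rule to $\chi_1\chi_2$ using \eqref{eq: kinetic equation}, and exploit the sign of the kinetic measure together with the Stratonovich-to-It\^o cancellation of the transport noise. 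The delicate points are (i) justifying the product rule when the coefficient $\sigma$ is only H\"older at zero, which requires the truncation \eqref{eq: local regularity property of stochastic kinetic solution} and sending the cutoffs to $0$ and $\infty$ with the help of \eqref{eq: decay of kinetic measure at infinity}, and (ii) handling the boundary contributions, which are absorbed because the truncated solutions both share the trace $h_{\rho_b}$ in $H^1_0(U)$ so the boundary flux terms cancel pairwise. Passing $\delta\to 0$ and then removing the truncations yields \eqref{eq: standard l1 contraction}; uniqueness and continuity of $\rho_0\mapsto\rho(\cdot;\rho_0)$ are immediate corollaries, and the almost-sure $C([0,T];L^1_x)$ regularity follows from \eqref{eq: kinetic equation} after testing against a bounded approximation of $\mathbbm{1}_{\xi>0}$.
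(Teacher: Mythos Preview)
The paper does not actually prove this theorem; it merely cites it from \cite{Shyam25} with the sentence ``The following well-posedness result was obtained by the first author in \cite{Shyam25}.'' Your proposal outlines precisely the strategy used there: regularize to a non-degenerate problem, derive uniform entropy estimates, pass to the limit in the kinetic formulation, and obtain uniqueness and the $L^1$-contraction via the doubling-of-variables argument with boundary-adapted cutoffs. So your sketch is correct and aligned with the cited source, though there is no in-paper proof to compare against.
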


\subsection{Assumptions}\label{subsec: assumptions}
We begin by presenting the assumptions that are needed on the coefficients $(\Phi,\sigma,\nu)$ of equation \eqref{SPDE-0}.
Several of the assumptions are needed for the well-posedness of the equation (Theorem \ref{thm: standard well posedness thm from Popat25} from above) and are not used in the ergodicity arguments of the present work, but are stated for completeness.

As we motivated in the introduction, we will consider separately the classical Dean--Kawasaki case and the porous medium case.
In the classical case, the following choice of coefficients allows \eqref{SPDE-0} to be interpreted as the fluctuating mean-field limit of Brownian particles, coinciding with the classical Dean--Kawasaki model introduced by Dean \cite{D96}. 
\begin{assumption}[Classical Dean--Kawaski assumption]\label{asm: classic-DK}
Suppose that $\Phi(\xi)=\xi$, and $\sigma(\xi)=\sqrt{\xi}$, for every $\xi\geq0$. 
\end{assumption}

On the other hand, the porous medium case $(\Phi(\xi),\sigma(\xi))=(\xi^m,\xi^{\frac{m}{2}})$ with $m>1$ allows \eqref{SPDE-0} to be interpreted as the fluctuating hydrodynamics of the zero-range process with polynomial jump rates.

\begin{assumption}[Porous medium Dean--Kawasaki assumption]\label{asm: assumptions on non-linear coeffs for contraction estime}
    Suppose $\Phi,\sigma\in C([0,\infty))$ satisfy the following assumptions:
    \begin{enumerate}
        \item We have $\Phi,\sigma\in C^{1,1}((0,\infty))$. 
        \item The function $\Phi$ is strictly increasing and is zero at zero: $\Phi(0)=0$ with $\Phi'>0$ on $(0,\infty)$. 
        \item Growth conditions on functions related to $\Phi$: There exists constants $c,m\in(0,\infty)$ such that for every $\xi\in[0,\infty)$
        \begin{equation}\label{eq: growth bound on phi}
            \Phi(\xi)\leq c(1+\xi^m)\quad \text{and} \quad \Phi'(\xi)\leq c(1+\Phi(\xi)).
        \end{equation}
         Let $\Phi$ satisfy points 1 and 2 above.
         Define $\Theta_{\Phi,2}\in C([0,\infty))\cap C^1((0,\infty))$ to be the unique function satisfying
    \begin{equation*}\label{eq: definition of Theta Phi 2 in assumption}
        \Theta_{\Phi,2}(0)=0,\quad \Theta_{\Phi,2}'(\xi)=(\Phi'(\xi))^{1/2}. 
    \end{equation*}
    We assume that there exist constants
    $c\in(0,\infty)$, $q\in[1,\infty)$ such that for every $\xi,\eta\in[0,\infty)$
    \begin{equation*}\label{eq: second assumption on Phi}
        |\xi-\eta|^q\leq c|\Theta_{\Phi,2}(\xi)-\Theta_{\Phi,2}(\eta)|^2.
    \end{equation*}
        Furthermore, assume that there exists a constant $c\in(0,\infty)$ such that for every $\xi\in[0,\infty)$ and $m\in(0,\infty)$ as in \eqref{eq: growth bound on phi}, we have
        \begin{equation*}\label{eq: upper bound on Theta Phi 2 assumption}
            \Theta_{\Phi,2}(\xi)\geq c(\xi^{\frac{m+1}{2}}-1).
        \end{equation*}

 \item Growth conditions on functions related to $\sigma$:
 There exists a constant $c\in(0,\infty)$  such that for every $\xi\in[0,\infty)$,
    \begin{equation}\label{eq: bound on nu squared and sigma sigma' squared}
       |\sigma(\xi)|+ |\sigma(\xi)\sigma'(\xi)|^2\leq  c(1+\xi+\Phi(\xi)).
    \end{equation}
       Furthermore, for each $\delta\in(0,1)$ there exists a constant $c_\delta\in(0,\infty)$ such that for every $\xi\in(\delta,\infty)$,
    \begin{equation}
        \frac{(\sigma'(\xi))^4}{\Phi'(\xi)}\leq c_\delta (1+\xi+\Phi(\xi)).
    \end{equation}   
        
        \item At least linear decay of $\sigma^2$ at zero: There exists a constant $c\in(0,\infty)$ such that
        \begin{equation*}
            \limsup_{\xi\to 0^+}\frac{\sigma^2(\xi)}{\xi}\leq c.
        \end{equation*}
        In particular this implies that $\sigma(0)=0$.
        \item Regularity of oscillations of of $\sigma^2$ at infinity: There is a constant $c\in (0,\infty)$ such that 
        \begin{equation}\label{eq: oscillations of sigma2 at infinity}
            \sup_{\xi'\in[0,\xi]}\sigma^2(\xi')\leq  c(1+\xi+\sigma^2(\xi)),\quad \text{for every }\xi\in[0,\infty).
        \end{equation}
        \item Regularity of $\sigma\sigma'$: The function $\sigma\sigma':\mathbb{R}\to\mathbb{R}$ is non-decreasing.
        Furthermore we have that either $\sigma\sigma'\in C([0,\infty)])$ with $(\sigma\sigma')(0)=0$, or $\nabla\cdot F_2=0$, where $F_2$ is defined in  \eqref{eq: def of F_2}.
        \item Lipschitz type bound for $\sigma\sigma'$: Define the unique function $\Psi_{\sigma}:\mathbb{R}\to\mathbb{R}$ by 
        \begin{equation}\label{eq: def of Psi sigma}
            \Psi_{\sigma}(0)=0, \quad \Psi_{\sigma}'(\xi)=(\sigma'(\xi))^2.
        \end{equation}
        Assume that $\Psi_{\sigma}$ is well defined and that there exists a constant $c\in(0,\infty)$ such that
         \begin{align}\label{eq: local bound for sigma sigma'}
|\sigma(\xi_1)\sigma'(\xi_1)-\sigma(\xi_2)\sigma'(\xi_2)|\leq c|\Psi_{\sigma}(\xi_1)-\Psi_{\sigma}(\xi_2)|,\quad \text{for every }\xi_1,\xi_2\geq0.
\end{align}
 
    \end{enumerate}
\end{assumption}

\begin{remark}
We note that the first seven conditions in Assumption \ref{asm: assumptions on non-linear coeffs for contraction estime} are parallel to those in Fehrman and Gess \cite{FG24} and in the work of the first author \cite{Shyam25}, contributing to the well-posedness of \eqref{SPDE-0}. However, condition (8) in Assumption \ref{asm: assumptions on non-linear coeffs for contraction estime} is new and is introduced to establish the super-contraction property. 
As a representative example, we take $\sigma(\xi)=\xi^{\frac{m}{2}}$ with $m>1$ to obtain 
\begin{align*}
\Psi_{\sigma}(\xi):=\int^{\xi}_0[\sigma'(\zeta)]^2\,d\zeta=\frac{m^2}{4(m-1)}\xi^{m-1},
\end{align*}
and thus
\begin{align*}
\sigma(\xi)\sigma'(\xi)=\frac{m}{2}\xi^{m-1}=\frac{2(m-1)}{m}\Psi_{\sigma}(\xi). 
\end{align*}
Therefore, the bound \eqref{eq: local bound for sigma sigma'} holds for the full range of the porous medium equations.

\end{remark}
We now provide the assumption on the non-linear flux term $\nu$.
\begin{assumption}\label{asm: Assumptions on nu}
Suppose that the non-linear function $\nu\in C([0,\infty);\mathbb{R}^d)\cap C^1([0,\infty);\mathbb{R}^d)$ satisfies
\begin{enumerate}
    \item Non-decreasing: We assume that $\nu=(\nu_1,\hdots,\nu_d)$ is non-decreasing in each component.
     \item  Growth condition on $\nu$:  There exists a constant $c\in(0,\infty)$  such that for every $\xi\in[0,\infty)$,
    \begin{equation}\label{eq: growth bound on nu}
        |\nu(\xi)|+\leq  c(1+\xi+\Phi(\xi)).
    \end{equation}
    \item Regularity of oscillations at infinity: There is a constant $c\in(0,\infty)$ such that 
     \begin{equation}\label{eq: oscillations of nu at infinity}
            \sup_{\xi'\in[0,\xi]}|\nu(\xi')|\leq  c(1+\xi+|\nu(\xi)|),\quad \text{for every }\xi\in[0,\infty).
        \end{equation}
\end{enumerate}
\end{assumption}
We refer to Remark 4.2 of Fehrman and Gess \cite{FG24} for a comprehensive discussion on conditions \eqref{eq: oscillations of nu at infinity} and \eqref{eq: oscillations of sigma2 at infinity}.
The assumptions are satisfied in the case that the functions $\sigma^2, \nu$ are  increasing, are uniformly continuous, or grow linearly at infinity.
The assumption is more general than any of the above three examples and essentially amounts to a restriction on the growth of the magnitude of oscillations, rather than frequency of oscillations at infinity.

Let 
\begin{equation}\label{eq: definition of curly A}
    \mathcal{A}(\xi_1,\xi_2)=|\Phi(\xi_1) -\Phi(\xi_2)| +\sum_{i=1}^d|\nu_i(\xi_1)-\nu_i(\xi_2)|,\quad \text{for every }\xi_1,\xi_2\geq0.  
\end{equation}
Owing to the second point of Assumption \ref{asm: assumptions on non-linear coeffs for contraction estime}, $\mathcal{A}$ is strictly positive unless $\xi_1=\xi_2$.
We further remark that if $\rho_1,\rho_2$ are two stochastic kinetic solutions of \eqref{SPDE-0} in the sense of Definition \ref{def: stochastic kinetic solution of DK}, then the space-time integral\begin{equation*}
    \int_0^t\int_U \mathcal{A}(\rho_1(y,s),\rho_2(y,s))\,dy\,ds
\end{equation*}
is well-defined owing to the growth conditions \eqref{eq: growth bound on phi} and \eqref{eq: growth bound on nu} on $\Phi$ and $\nu$ respectively, combined with the estimates of Propositions 4.12 and 4.14 of the work of the first author \cite{Shyam25}.
For the ergodicity results we require the following upper bound on the growth of $\mathcal{A}$.
\begin{assumption}\label{asm: assumption on curly A}
Suppose that there exists a constant $q_0\geq0$ such that for every $\xi_1,\xi_2\geq0$, we have
\begin{equation}\label{eq: assumption on curly A}
    \mathcal{A}(\xi_1,\xi_2)\geq c_{q_0}|\xi_1-\xi_2|^{q_0+1}. 
\end{equation}
\end{assumption}
Clearly the assumption is satisfied if either $|\Phi(\xi_1)-\Phi(\xi_2)|\geq c_{q_0}|\xi_1-\xi_2|^{q_0+1}$, for every $\xi_1,\xi_2\geq0$, or there exists an $i\in\{1,\hdots,d\}$ such that $|\nu_i(\xi_1)-\nu_i(\xi_2)|\geq c_{q_0}|\xi_1-\xi_2|^{q_0+1}$, for every $\xi_1,\xi_2\geq0$.

In the classical Dean--Kawasaki case it is clear that the assumption holds with $q_0=0$, and in the porous medium case $\Phi(\xi)=\xi^m$, for every $m>1$, since it holds that 
\begin{equation*}
    |\xi_1^m-\xi_2^m|\geq |\xi_1-\xi_2|^{m},\quad \text{for every }\xi_1,\xi_2\geq0, 
\end{equation*}
equation \eqref{eq: assumption on curly A} holds for $q_0=m-1$.  
We comment that the growth assumption \eqref{eq: assumption on curly A} does not hold for fast-diffusion type non-linearities, that is, for $\Phi(\xi)=\xi^m$ for $m\in(0,1)$.
Consequently in the present work we restrict our attention to the classical and porous medium cases.

The assumption on the initial data and boundary data below guarantees the well-posedness of \eqref{SPDE-0}.
We assume that it holds for every statement in the paper and will not repeatedly write so.
\begin{assumption}[Initial data and boundary conditions.]\label{asm: assumption on initial data and boundary condition}
\leavevmode
\begin{enumerate}
    \item Initial data: Assume that the $\mathcal{F}_0$-measurable initial data $\rho_0$ satisfies $\rho_0\in L^2_\omega L^2_x$.
    \item Boundary data: 
    In the classical Dean--Kawasaki case we assume that the boundary data $\rho_b$  is non-negative constant function including zero.
    In the porous medium Dean--Kawaski case the boundary data is non-negative, independent of time and is either a constant function including zero, or a smooth function $\rho_b\in C^\infty(\partial U)$ bounded away from zero. 
\end{enumerate}
\end{assumption}
\begin{remark}
    In the work of the first author \cite{Shyam25} the assumptions on the boundary data are more general than that stated in Assumption \ref{asm: assumption on initial data and boundary condition}.
    Based on a priori estimates, we can consider $H^1(\partial U)$-regular\footnote{See page 176 of Fabes \cite{fabes1978potential} for a rigorous definition of the $H^1(\partial U)$-norm on a $C^1$-regular domain in terms of a local coordinate system of the boundary $\partial U$.} boundary data satisfying several additional integrability constraints.
    The specific integrability constraints are given in points 9-11 of Assumption 4.2, Assumption 4.9, points 5 and 6 of Assumption 4.20, Assumption 4.23 and Assumption 4.28 of \cite{Shyam25}.
    Since it will not play a key role here, we do not go into the details of this point.
    
\end{remark}

\section{Super contraction property and choice of weight function}\label{sec: rigorous proof of super contraction subsection section}

\subsection{Informal computation in the classical case}\label{sec: informal computation and choice of weight function}

In this section we provide an informal computation regarding super contraction estimate, see Section \ref{subsec: key ideas and technical comments}.
By exploiting the weight function $w$, we show that for the functional $\mathcal{A}$ defined in \eqref{eq: definition of curly A}, the standard contraction property \eqref{eq: standard l1 contraction} can be upgraded to a \emph{super--contraction}, albeit in an averaged (expectation) sense. Precisely, we aim to show 
\begin{equation}\label{eq: aim for super contraction in informal proof}
    \mathbb{E}\|\rho_1(\cdot,t)-\rho_2(\cdot,t)\|_{L_{w;x}^1}\leq \mathbb{E}\|\rho_{1,0}-\rho_{2,0}\|_{L_{w;x}^1}-c\mathbb{E}\int_0^t\int_U \mathcal{A}(\rho_1(y,s),\rho_2(y,s))\,dy\,ds,
\end{equation}
for a constant $c\in(0,\infty)$ independent of $t$.

To illustrate the main point, in this section we work with the classical Dean--Kawasaki SPDE \eqref{SPDE-1}, which we recall corresponds to the choice of non-linearities $(\Phi(\xi),\sigma(\xi))=(\xi,\sqrt{\xi})$ .
We will also assume that we have homogeneous Dirichlet boundary conditions $\rho_b=0$ which will allow us to integrate by parts and not pick up additional boundary terms.
These simplifications are removed in the rigorous proof presented in the next section. 

The It\^o form of equation \eqref{SPDE-1} is 
\begin{equation}\label{SPDE-2}
	\partial_t\rho=\Delta\rho-\nabla\cdot\nu(\rho)-\nabla\cdot(\sqrt{\rho}\xi^F)+\frac{1}{4}\nabla\cdot\left(\frac{F_1}{2}\rho^{-1}\nabla\rho+F_2\right). 
\end{equation}

We emphasise that the singular term $\rho^{-1}\nabla\rho=\nabla\log\rho$ arising from the Stratonovich-to-It\^o correction prevents us from considering weak solutions and makes necessary the kinetic solution framework.
The kinetic equation corresponding to \eqref{SPDE-2} satisfies, for every $\psi\in C_c^\infty(U\times(0,\infty))$ and every $t\in(0,T]$, almost surely,
\begin{align}\label{eq: kinetic equation in DK case}
&\int_{\mathbb{R}}\int_{U}\chi(x,\xi,t)\psi(x,\xi)\,dx\,d\xi=\int_{\mathbb{R}}\int_{U}\chi(x,\xi,t=0)\psi(x,\xi)\,dx\,d\xi \nonumber\\
&-\int_0^t \int_{U}\left(\nabla\rho+\frac{1}{8}F_1\rho^{-1}\nabla\rho +\frac{1}{4}F_2\right)\cdot\nabla\psi(x,\xi)|_{\xi=\rho}\,dx\,ds-\int_{\mathbb{R}}\int_0^t\int_{U}\partial_\xi\psi(x,\xi)\,dq\nonumber\\
    & +\frac{1}{2}\int_0^t \int_{U}\left(
    \frac{1}{2}\nabla\rho\cdot F_2 + \rho F_3 \right)\partial_\xi\psi(x,\rho)\,dx\,ds-\int_0^t \int_{U}\psi(x,\rho)\nabla\cdot (\sqrt{\rho} \,d{\xi}^F)\,dx\nonumber\\
    &  -        \int_0^t \int_{U}\psi(x,\rho)\nabla\cdot\nu(\rho)\,dx\,ds.
\end{align}
Let $\rho_1,\rho_2$ be two renormalized kinetic solutions with initial data $\rho_{1,0},\rho_{2,0}$ and kinetic measures $q_1,q_2$ respectively.
Using the properties of indicator functions we see that 
\begin{align}\label{eq: difference of two solutions in terms of kinetic equation}
d\mathbb{E}\|\rho_1-\rho_2\|	_{L^1_{w;x}}:=&d\mathbb{E}\int_U|\rho_1-\rho_2|\omega(x)\nonumber\\
=&d\mathbb{E}\int_U\Big(\int_{\mathbb{R}}|\chi_1-\chi_2|^2\Big)\omega(x)\nonumber\\
=&d\mathbb{E}\int_U\Big(\int_{\mathbb{R}}\chi_1+\chi_2-2\chi_1\chi_2\Big)\omega(x). 
\end{align}
In the informal computation we aim to emphasise the impact of the weight function.
Consequently, to obtain an expression for the first two terms on the right hand side of \eqref{eq: difference of two solutions in terms of kinetic equation}, we have for $i=1,2$ by picking test function\footnote{In the rigorous proof we will multiply this choice of test function $\psi$ with cutoff functions which ensure that $\psi$ is compactly supported in space and velocity.} $\psi(x,\xi)=w(x)$ in the kinetic equation \eqref{eq: kinetic equation in DK case}, the decomposition

\begin{equation}\label{eq: approx}
    \,d\int_{\mathbb{R}}\int_{U}\chi_i(x,\xi,t)w(x)\,dx\,d\xi\approx I_t^{i,wgt}+I_t^{i,mart}+I_t^{i,flux},
\end{equation}
with the weight term involving derivatives of the weight function,
\begin{equation*}
    I_t^{i,wgt}=-\int_0^t \int_{U}\left(\nabla\rho_i+\frac{1}{8}F_1\rho_i^{-1}\nabla\rho_i+\frac{1}{4}F_2\right)\cdot\nabla w(x)\,dx\,ds,
\end{equation*}
the martingale term involving the noise term
\begin{equation*}
    I_t^{i,mart}=-\int_0^t \int_{U}w(x)\nabla\cdot (\sqrt{\rho_i}\,d{\xi}^F)\,dx,
\end{equation*}
and the flux term involving the non-linearity $\nu$,
\begin{equation*}
    I_t^{i,flux}= -        \int_0^t \int_{U}w(x)\nabla\cdot\nu(\rho_i)\,dx\,ds.
\end{equation*}

Although we will not elaborate on this point in the informal computation, we emphasise that equation~\eqref{eq: approx} is written with the symbol ``$\approx$'' rather than as an equality because certain boundary terms arising from integration by parts have been omitted. In the rigorous argument, these terms vanish due to the introduction of a spatial cutoff function, defined in Definition~\ref{def: convolution kernels and spatial cutoff} below. Later, additional cutoff terms will appear, involving derivatives acting on the cutoff functions. By collecting all such terms, we will perform a careful analysis to justify the limiting procedure. Using further properties of the solutions $\rho_1$ and $\rho_2$, we will see that these cutoff contributions are non-positive and may therefore be discarded. To simplify the informal exposition in this subsection, we do not include the details of these cutoff techniques here.

Next we aim to address the cross term.
Using It\^o's product rule, we see that 
\begin{align}\label{eq: product rule for d chi 1 chi 2}
-2d\mathbb{E}\int_U\int_{\mathbb{R}}	\chi_1\chi_2\omega(x)=-2\mathbb{E}\int_U\int_{\mathbb{R}}	(d\chi_1)\chi_2\omega(x)+\chi_1(d\chi_2)\omega(x)+d\langle\chi_1,\chi_2\rangle \omega(x). 
\end{align}
To get an expression for the first term on the right hand side of equation \eqref{eq: product rule for d chi 1 chi 2}, we use the kinetic equation \eqref{eq: kinetic equation in DK case} with test function\footnote{Again we emphasise that in the rigorous proof we will multiply this choice of test function $\psi$ with cutoff functions which ensure that $\psi$ is compactly supported in space and velocity.} $\psi(x,\xi)=\chi_2w(x)$ and the product rule to get 
\begin{align*}
&\,\int_{\mathbb{R}}\int_{U}\,d\chi_1(x,\xi,t)(\chi_2 w(x))\,dx\,d\xi= -\int_0^t \int_{U}\left(\nabla\rho_1+\frac{1}{8}F_1\rho_1^{-1}\nabla\rho_1+\frac{1}{4}F_2\right)\cdot\nabla w(x) \chi_2|_{\xi=\rho_1}\,dx\,ds\nonumber\\
&-\int_0^t \int_{U}w(x)\left(\nabla\rho_1+\frac{1}{8}F_1\rho_1^{-1}\nabla\rho_1+\frac{1}{4}F_2\right)\cdot\nabla\chi_2 |_{\xi=\rho_1}\,dx\,ds\nonumber\\
&-\int_{\mathbb{R}}\int_0^t\int_{U}w(x)\partial_\xi\chi_2\,dq_1+\frac{1}{2}\int_0^t \int_{U}w(x)\left(
     \frac{1}{2}\nabla\rho_1\cdot F_2+\rho_1 F_3 \right)\partial_\xi\chi_2 |_{\xi=\rho_1}\,dx\,ds\nonumber\\
    & -\int_0^t \int_{U}w(x) \chi_2 |_{\xi=\rho_1}\nabla\cdot (\sqrt{\rho_1}\,d{\xi}^F)\,dx -        \int_0^t \int_{U}w(x)\chi_2 |_{\xi=\rho_1}\nabla\cdot\nu(\rho_1)\,dx\,ds.
\end{align*}
The distributional equalities
\begin{equation}\label{eq: distributional equality for derivatives of kinetic function}
    \partial_\xi \chi_2=\delta_0(\xi)-\delta_0(\xi-\rho_2),\quad \nabla_x\chi_2=\delta_0(\xi-\rho_2)\nabla\rho_2,
\end{equation}
then imply that the above can be re-written as
\begin{multline}\label{eq: final expression for d chi1 chi2 w}
\int_{\mathbb{R}}\int_{U}\,d\chi_1(x,\xi,t)(\chi_2 w(x))\,dx\,d\xi =I_t^{1,2,wgt}+I_t^{1,2,err}+I_t^{1,2,meas}+I_t^{1,2,mart}+I_t^{1,2,flux},
\end{multline}

with the weight term
\begin{equation*}
    I_t^{1,2,wgt}= -\int_0^t \int_{U}\left(\nabla\rho_1+\frac{1}{8}F_1\rho_1^{-1}\nabla\rho_1+\frac{1}{4}F_2\right)\cdot\nabla w(x) \chi_2|_{\xi=\rho_1}\,dx\,ds,
\end{equation*}
measure term
\begin{equation*}
    I_t^{1,2,meas}=-\int_0^t \int_{U}w(x)\nabla\rho_1\cdot\nabla\rho_2\delta_0(\rho_1-\rho_2)\,dx\,ds-\int_{\mathbb{R}}\int_0^t\int_{U}w(x)(\delta_0(\xi)-\delta_0(\xi-\rho_2))\,dq_1,
\end{equation*}
error term
\begin{multline*}
    I_t^{1,2,err}=-\int_0^t \int_{U}w(x)\left(\frac{1}{8}F_1\rho_1^{-1}\nabla\rho_1+\frac{1}{4}F_2\right)\cdot\nabla\rho_2\delta_0(\rho_1-\rho_2)\,dx\,ds\\
    +\frac{1}{2}\int_0^t \int_{U}w(x)\left(
     \frac{1}{2}\nabla\rho_1\cdot F_2+ \rho_1 F_3 \right)(\delta_0(\rho_1)-\delta_0(\rho_1-\rho_2))\,dx\,ds,
\end{multline*}
martingale term
\begin{equation*}
    I_t^{1,2,mart}= -\int_0^t \int_{U}w(x) \chi_2 |_{\xi=\rho_1}\nabla\cdot (\sqrt{\rho_1}\,d{\xi}^F)\,dx,
\end{equation*}
and finally the flux term
\begin{equation*}
    I_t^{1,2,flux}=-        \int_0^t \int_{U}w(x)\chi_2 |_{\xi=\rho_1}\nabla\cdot\nu(\rho_1)\,dx\,ds.
\end{equation*}

Similarly, for the second term on the right hand side \eqref{eq: product rule for d chi 1 chi 2}, we use the kinetic equation with respect to $\chi_2$ and test function $\psi=\chi_1 w$. 
This gives the analogous decomposition
\begin{align*}
\int_{\mathbb{R}}\int_{U}\,d\chi_1(x,\xi,t)(\chi_2 w(x))\,dx\,d\xi=I_t^{2,1,wgt}+I_t^{2,1,err}+I_t^{2,1,meas}+I_t^{2,1,mart}+I_t^{2,1,flux},
\end{align*}
where $I_t^{2,1,\cdot}$ terms are analogous to \eqref{eq: final expression for d chi1 chi2 w} with the the roles of $\rho_1$ and $\rho_2$ interchanged.

Finally, the covariation term can be calculated using the definition of the noise $\xi^F$ to give 

\begin{multline}\label{eq: expression for cross variation term}
	\int_U\int_{\mathbb{R}}\,d\langle\chi_1,\chi_2\rangle_t \omega(x)\,dx =\int^t_0\int_U\delta_0(\rho_1-\rho_2)\left(\frac{1}{4}F_1\rho_1^{-1/2}\rho_2^{-1/2}\nabla\rho_1\cdot\nabla\rho_2+\frac{1}{2}\rho_1^{1/2}\rho_2^{-1/2}\nabla\rho_2\cdot F_2\right.\\
    \left.+\frac{1}{2}\rho_2^{1/2}\rho_1^{-1/2}\nabla\rho_1\cdot F_2+F_3\sqrt{\rho_1}\sqrt{\rho_2}\right)\omega(x)\,dx\,ds.
\end{multline}
The covariation terms are put in the error term.

By combining \eqref{eq: difference of two solutions in terms of kinetic equation} and \eqref{eq: product rule for d chi 1 chi 2} with the corresponding computations for each term, we have the decomposition 
\begin{equation}\label{eq: decomposition of difference of two solutions}
    d\int_U\Big(\int_{\mathbb{R}}\chi_1+\chi_2-2\chi_1\chi_2\Big)\omega(x)=I_t^{wgt}+I_t^{err}+I_t^{meas}+I_t^{mart}+I_t^{flux},
\end{equation}
where 
\begin{equation*}
I_t^{wgt,mart,flux}:=I_t^{1,wgt,mart,flux}+I_t^{2,wgt,mart,flux}-2\left(I_t^{1,2,wgt,mart,flux}+I_t^{2,1,wgt,mart,flux}\right),
\end{equation*}
the measure term is given by
\begin{equation*}
    I_t^{meas}=-2\left(I_t^{1,2,meas} + I_t^{2,1,meas}\right),
\end{equation*}
and finally the error term is given by 
\begin{equation*}
    I_t^{err}=-2\left(I_t^{1,2,err} + I_t^{2,1,err}+\,\text{quadratic covariation contribution}\right).
\end{equation*}
Let us evaluate the contribution of each term in turn.

\textbf{Measure term.}

The measure term is
\begin{multline*}
    I_t^{meas}=4\int_0^t \int_{U}w(x)\nabla\rho_1\cdot\nabla\rho_2\delta_0(\rho_1-\rho_2)\,dx\,ds+2\int_{\mathbb{R}}\int_0^t\int_{U}w(x)(\delta_0(\xi)-\delta_0(\xi-\rho_2))\,dq_1\\
+2\int_{\mathbb{R}}\int_0^t\int_{U}w(x)(\delta_0(\xi)-\delta_0(\xi-\rho_1))\,dq_2.
\end{multline*}
Since the kinetic measure vanishes at zero, see Proposition 4.29 of the work of the first author \cite{Shyam25}, it follows that
\begin{equation*}
    \int_{\mathbb{R}}\int_0^t\int_{U}w(x)\delta_0(\xi)\,(dq_1+dq_2)=0.
\end{equation*}
For the remaining terms, it follows by the distributional equality \eqref{eq: bound for kinetic measure} for the kinetic measure that using H\"older's inequality,
\begin{multline*}
    4\int_0^t \int_{U}w(x)\nabla\rho_1\cdot\nabla\rho_2\delta_0(\rho_1-\rho_2)\,dx\,ds-2\int_{\mathbb{R}}\int_0^t\int_{U}w(x)\delta_0(\xi-\rho_2)\,dq_1-2\int_{\mathbb{R}}\int_0^t\int_{U}w(x)\delta_0(\xi-\rho_1)\,dq_2\\
    \leq0.
\end{multline*}
Hence we conclude that
\begin{equation*}
    I_t^{meas}\leq 0.
\end{equation*}

\textbf{Error term.}

The error term is 
\begin{multline*}
    I_t^{err}=2\int_0^t \int_{U}w(x)\left(\frac{1}{8}F_1\rho_1^{-1}\nabla\rho_1+\frac{1}{4}F_2\right)\cdot\nabla\rho_2\delta_0(\rho_1-\rho_2)\,dx\,ds\\
-\int_0^t \int_{U}w(x)\left(\frac{1}{2}\nabla\rho_1\cdot F_2+
     \rho_1 F_3 \right)(\delta_0(\rho_1)-\delta_0(\rho_1-\rho_2))\,dx\,ds\\
+2\int_0^t \int_{U}w(x)\left(\frac{1}{8}F_1\rho_2^{-1}\nabla\rho_2+\frac{1}{4}F_2\right)\cdot\nabla\rho_1\delta_0(\rho_2-\rho_1)\,dx\,ds\\
-\int_0^t \int_{U}w(x)\left(\frac{1}{2}\nabla\rho_2\cdot F_2+
     \rho_2 F_3 \right)(\delta_0(\rho_2)-\delta_0(\rho_2-\rho_1))\,dx\,ds\\
     -2\int^t_0\int_U\delta_0(\rho_1-\rho_2)\left(\frac{1}{4}F_1\rho_1^{-1/2}\rho_2^{-1/2}\nabla\rho_1\cdot\nabla\rho_2+\frac{1}{2}\rho_1^{1/2}\rho_2^{-1/2}\nabla\rho_2\cdot F_2+\right.\\
    \left.\frac{1}{2}\rho_2^{1/2}\rho_1^{-1/2}\nabla\rho_1\cdot F_2+F_3\sqrt{\rho_1}\sqrt{\rho_2}\right)\omega(x)\,dx\,ds.
\end{multline*}

We realise that the covariation term \eqref{eq: expression for cross variation term} helps us cancel out the other terms.

The terms involving $F_1$ can be factorised to obtain
\begin{equation*}
    \frac{1}{4}\int_0^t \int_{U}w(x)\delta_0(\rho_1-\rho_2)F_1(\rho_1^{-1/2}-\rho_2^{-1/2})^2\nabla\rho_1\cdot\nabla \rho_2=0
\end{equation*}

For the terms involving $F_2$, we first realise that, by Stampacchia's lemma, $\nabla\rho_i=0$ on the set $\{\rho_i=0\}$, therefore we have
\begin{equation*}
\int_0^t \int_{U}w(x)\nabla\rho_1\cdot F_2\delta_0(\rho_1)\,dx\,ds=0.
\end{equation*}
We can factorise the remaining terms based on whether we have a $\nabla\rho_1\cdot F_2$ contribution or whether we have a $\nabla\rho_2\cdot F_2$ contribution.
Factorising in this way gives that the remaining terms are given by
\begin{multline*}
    \int_0^t \int_{U}w(x) \delta_0(\rho_1-\rho_2)F_2\cdot\nabla\rho_2(1-\rho_2^{1/2}\rho_1^{-1/2})\,dx\,ds\\
    +\int_0^t \int_{U}w(x)\delta_0(\rho_1-\rho_2)F_2\cdot\nabla\rho_1(1-\rho_1^{1/2}\rho_2^{-1/2})\,dx\,ds=0,
\end{multline*}
where we relied on the identity $\rho_i^{1/2}\rho_j^{-1/2}\delta_0(\rho_i-\rho_j)=1$.
Finally, for the terms involving $F_3$, since we trivially have that $\rho_i=0$ on the set $\{\rho_i=0\}$ it follows that 
\begin{equation*}
    \int_0^t \int_{U}w(x)
     \rho_i F_3 \delta_0(\rho_i)=0.
\end{equation*}
For the remaining terms involving $F_3$, they can be factorised to obtain
\begin{align*}
     \int_0^t\int_U w(x) \delta_0(\rho_1-\rho_2)F_3 \left(\sqrt{\rho_1}-\sqrt{\rho_2}\right)^2=0.
\end{align*}
We showed that 
\begin{equation*}
    I_t^{err}=0.
\end{equation*}
\textbf{Weight term.}
By using the distributional equality
\begin{equation}\label{eq: distributional equality for 2 chi -1}
    2\chi_i|_{\xi=\rho_j}-1=2\mathbbm{1}_{0<\rho_j<\rho_i}-1\approx\sgn(\rho_i-\rho_j),
\end{equation}
for $i\neq j\in\{1,2\}$, where $\sgn$ denotes the sign function\footnote{We remark that the expression $2\mathbbm{1}_{0<\rho_j<\rho_i}-1$ is not 
exactly equivalent to $\sgn(\rho_i-\rho_j)$. In the rigorous argument, 
however, the left--hand side of \eqref{eq: distributional equality for 2 chi -1} 
is replaced by the limit of a velocity--convolution regularization. 
Since the indicator function $\mathbbm{1}_{0<\rho_j<\rho_i}$ is 
discontinuous at $\rho_j=\rho_i$, taking the convolution limit produces 
an additional term $\tfrac{1}{2}\mathbbm{1}_{\rho_j=\rho_i}$. This extra 
contribution ensures that \eqref{eq: distributional equality for 2 chi -1} 
agrees exactly with the sign function $\sgn(\rho_i-\rho_j)$ satisfying $\sgn(0)=0$.
}, the weight term can be re-written as

\begin{multline*}
    I_t^{wgt}=\int_0^t \int_{U}\left(\nabla\rho_1+\frac{1}{8}F_1\rho_1^{-1}\nabla\rho_1+\frac{1}{4}F_2\right)\cdot\nabla w(x) \sgn(\rho_2-\rho_1)\,dx\,ds\\
    +\int_0^t \int_{U}\left(\nabla\rho_2+\frac{1}{8}F_1\rho_2^{-1}\nabla\rho_2+\frac{1}{4}F_2\right)\cdot\nabla w(x)\sgn(\rho_1-\rho_2)\,dx\,ds\\
    =\int_0^t\int_U \left(\nabla\rho_1-\nabla\rho_2+\frac{1}{8}F_1\rho_1^{-1}\nabla\rho_1-\frac{1}{8}F_1\rho_2^{-1}\nabla\rho_2\right)\cdot\nabla w(x) \sgn(\rho_2-\rho_1)\,dx\,ds.
\end{multline*}
In the final line we combined the terms by using that $\sgn$ is an odd function.
Using the monotonicity of the identity function and integrating by parts gives that the difference of the first two terms can be informally simplified to
\begin{multline*}
    \int_0^t\int_U \left(\nabla\rho_1-\nabla\rho_2\right)\cdot\nabla w(x) \sgn(\rho_2-\rho_1)\,dx\,ds=-\int_0^t\int_U \nabla|\rho_1-\rho_2|\cdot\nabla w(x) \,dx\,ds\\
    =\int_0^t\int_U |\rho_1-\rho_2|\Delta w(x) \,dx\,ds.
\end{multline*}
To ensure that this term has a strictly negative contribution, we pick $w$ satisfying 
\begin{equation}\label{eq: first condition on w}
    (\Delta w)(x)< 0, \quad\forall x\in U.
\end{equation}
The same method applies to the second term, this time using that $\xi\mapsto \frac{1}{8}F_1 \log \xi$
is an increasing function.
After integration by parts and an application of the product rule, we obtain
\begin{multline}\label{eq: computation for log term in informal proof}
    \frac{1}{8}\int_0^t\int_U F_1\left(\nabla\log\rho_1-\nabla\log\rho_2\right)\cdot\nabla w(x) \sgn(\rho_2-\rho_1)\,dx\,ds\\
    =-\frac{1}{8}\int_0^t\int_U F_1\nabla|\log\rho_1-\log\rho_2|\cdot\nabla w(x)\,dx\,ds\\
=\frac{1}{8}\int^t_0\int_U\left|\log\rho_1-\log\rho_2\right|[F_2\cdot \nabla w(x)+F_1\Delta w(x)]\,dx\,ds.
\end{multline}

To ensure that this term has a negative contribution, we also impose on the weight function that
\begin{equation}\label{eq: second condition on w}
    F_2(x)\cdot \nabla w(x)+F_1(x)\Delta w(x)=\nabla\cdot(F_1(x)\nabla w(x))<0,\quad \forall x\in U.
\end{equation}
As we mentioned in the introduction, the computation \eqref{eq: computation for log term in informal proof} is only informal, and rigorously holds only in the presence of cutoffs of the solution around zero and infinity.
In the rigorous computation, since the term has a negative contribution, we can remove it before before taking the limits of the cutoff functions.

We showed that under conditions \eqref{eq: first condition on w} and \eqref{eq: second condition on w} on the weight, we have
\begin{equation*}
    I_t^{wgt}\leq \int_0^t\int_U |\rho_1-\rho_2|\Delta w(x) \,dx\,ds.
\end{equation*}

\textbf{Flux term.}

By the same reasoning as the weight term\footnote{The monotonicity of $\nu$ is guaranteed by first point of Assumption \ref{asm: Assumptions on nu}.}, the flux term can be re-written as
\begin{multline*}
     I_t^{flux}=\int_0^t \int_{U}w(x)\nabla\cdot\nu(\rho_1) \sgn(\rho_2-\rho_1)\,dx\,ds+\int_0^t \int_{U}w(x)\nabla\cdot\nu(\rho_2)\sgn(\rho_1-\rho_2)\,dx\,ds\\
     =\int_0^t \int_{U}w(x)\left(\nabla\cdot\nu(\rho_1)-\nabla\cdot\nu(\rho_2)\right) \sgn(\rho_2-\rho_1)\,dx\,ds\\
    =-\sum_{i=1}^d\int_0^t \int_{U}w(x)\partial_{x_i}|\nu_i(\rho_1)-\nu_i(\rho_2)|\,dx\,ds =\sum_{i=1}^d\int_0^t \int_{U}\partial_{x_i}w(x)|\nu_i(\rho_1)-\nu_i(\rho_2)|\,dx\,ds.
\end{multline*}
To ensure that this term has a strictly negative contribution, we get the final condition on the weight function
\begin{equation}\label{eq: third condition on weight function}
    \partial_{x_i} w(x) < 0,\quad \forall i\in \{1,\hdots,d\},\, x\in U.
\end{equation}
We conclude for the flux term that
\begin{equation*}
    I_t^{flux}=\sum_{i=1}^d\int_0^t \int_{U}\partial_{x_i}w(x)|\nu_i(\rho_1)-\nu_i(\rho_2)|\,dx\,ds.
\end{equation*}

\textbf{Martingale term.}

The distributional equality \eqref{eq: distributional equality for 2 chi -1} allows us to re-write the martingale term as 

\begin{align*}
    I_t^{mart}=\int_0^t \int_{U}w(x)\nabla\cdot \left(\left(\sqrt{\rho_1}-\sqrt{\rho_2}\right)\,d{\xi}^F\right)\sgn(\rho_2-\rho_1)\,dx.
\end{align*}

Since we only care about the super contraction in expectation, it follows by the fact the martingale term is a true martingale that
\begin{equation*}
    \mathbb{E} I_t^{mart} =0.
\end{equation*}

\textbf{Conclusion.}

Putting everything together, we have that
\begin{multline}\label{eq: equation at the end for difference of two solutions}
    d\mathbb{E}\|\rho_1-\rho_2\|	_{L^1_{w;x}}
    \leq \mathbb{E}\int_0^t\int_U|\rho_1-\rho_2|\Delta w(x) \,dx\,ds
    +\mathbb{E}\sum_{i=1}^d\int_0^t \int_{U}\partial_{x_i}w(x)|\nu_i(\rho_1)-\nu_i(\rho_2)|\,dx\,ds.
\end{multline}

Based on the discussion in Section \ref{subsec: key ideas and technical comments} and Proposition \ref{prop: choice of weight function in informal proof} below,  an admissible choice of weight function is given by $w(x)=-\exp(\alpha x\cdot e)+C 
$, where $\alpha>0$, $e=\frac{1}{\sqrt{d}}(1,...,1)$ and $C$ is a constant that ensures non-negativity of the weight function.
With this choice, it follows that the right hand side of \eqref{eq: equation at the end for difference of two solutions} can be explicitly written as

\begin{multline}\label{eq: simplifying the super contraction estimate by using def of weight}
    -\alpha^2 \int_0^t\int_U  \left|\rho_1 -\rho_2\right|\exp(\alpha y\cdot e)  \,dy\,ds-\alpha\sum_{i=1}^d\int_0^t \int_{U}\exp(\alpha y\cdot e)e_i|\nu_i(\rho_1)-\nu_i(\rho_2)|\,dy\,ds\\
    \leq - \left(\inf_{y\in U}\exp(\alpha y\cdot e)\right)\left(\alpha^2 \int_0^t\int_U  \left|\rho_1 -\rho_2\right|  \,dy\,ds -\frac{\alpha}{\sqrt{d}}\sum_{i=1}^d\int_0^t \int_{U}|\nu_i(\rho_1)-\nu_i(\rho_2)|\,dy\,ds\right)\\
    \leq -\frac{\alpha}{\sqrt{d}}\left(\inf_{y\in U}\exp(\alpha y\cdot e)\right) \int_0^t\int_U  \left(\left|\rho_1 -\rho_2\right| +\sum_{i=1}^d|\nu_i(\rho_1)-\nu_i(\rho_2)|\right)  \,dy\,ds.
\end{multline}
In the final inequality we used that $d\geq1$ and the fact that $\alpha$ can be chosen to satisfy $\alpha>1$.

In this way, if we let $\mathcal{A}$ be defined as in \eqref{eq: definition of curly A}, we obtain a contraction of the form
\begin{multline*}
     \mathbb{E}\|\rho_1(\cdot,t)-\rho_2(\cdot,t)\|_{L^1_{w;x}}\leq \mathbb{E}\|\rho_{1,0}-\rho_{2,0}\|_{L^1_{w;x}}\\
     -\frac{\alpha}{\sqrt{d}}\left(\inf_{y\in U}\exp(\alpha y\cdot e)\right) \mathbb{E}\int_0^t\int_U \mathcal{A}(\rho_1(y,s),\rho_2(y,s))\,dy\,ds.
\end{multline*}
   
To simplify notation we denote the constant appearing on the right hand side as
\begin{equation}\label{eq: choice of constant}
    c_{\alpha,U,d}:=\frac{\alpha}{\sqrt{d}}\left(\inf_{y\in U}\exp(\alpha y\cdot e)\right),
\end{equation}
which we may subsequently denote by $c$ if we do not want to emphasise the dependence on $\alpha,U$ and $d$.
This completes the formal proof.
\qed

\subsection{Rigorous proof of super-contraction}\label{subsec: rigorous proof of super-contraction}
In this section we extend the super-contraction results to the case of general non-linear coefficients $\Phi,\sigma$ and $\nu$ and general boundary data $\rho_b$ satisfying Assumption \ref{asm: assumption on initial data and boundary condition}. Before proceeding it, we clarify the choice of the weight function in detail.
Recall from the informal proof above that we required the conditions \eqref{eq: first condition on w}, \eqref{eq: second condition on w}, \eqref{eq: third condition on weight function} on the weight function $w$.
We now construct an admissible weight function.

\begin{proposition}[Choice of weight function]\label{prop: choice of weight function in informal proof}
    There exists a non-negative weight function $w:U\to\mathbb{R}$ satisfying for every $x\in U$,
\begin{align}\label{eq: requirements of weight function in the formal proof}
\begin{cases}
    \Delta w(x) < 0\\
    F_2\cdot \nabla w(x)+F_1\Delta w(x)=\nabla\cdot(F_1\nabla w(x))<0,\\
\partial_{x_i} w(x)<0, \text{ for every }i=1,...,d.
\end{cases}
\end{align}
\end{proposition}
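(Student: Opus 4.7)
The plan is to define the weight explicitly via
\[ w(x) = -\exp(\alpha\, x\cdot e) + C, \]
where $e = \frac{1}{\sqrt{d}}(1, \ldots, 1)$ is the diagonal unit vector, and to choose the parameters $\alpha > 0$ sufficiently large and $C > 0$ sufficiently large so that all three conditions in \eqref{eq: requirements of weight function in the formal proof}, together with non-negativity, are satisfied simultaneously. The boundedness of $U$ combined with the non-degeneracy in Assumption \ref{asm: assumption on spatial components of the noise} is the key input that makes this explicit scheme work; essentially no obstacle remains beyond bookkeeping.

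First, I would compute the derivatives explicitly: for every $i \in \{1, \ldots, d\}$ and every $x \in U$,
\[ \partial_{x_i} w(x) = -\frac{\alpha}{\sqrt{d}} \exp(\alpha\, x\cdot e), \qquad \Delta w(x) = -\alpha^2 \exp(\alpha\, x\cdot e), \]
where I have used $|e|^2 = 1$. Since $\alpha > 0$ and the exponential is strictly positive, the monotonicity condition $\partial_{x_i} w(x) < 0$ and the strict subharmonicity $\Delta w(x) < 0$ follow immediately for any choice of $\alpha > 0$, handling the first and third lines of \eqref{eq: requirements of weight function in the formal proof}.

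Next, for the noise-weighted condition, I would expand
\[ \nabla \cdot (F_1 \nabla w)(x) = F_1(x)\,\Delta w(x) + \nabla F_1(x) \cdot \nabla w(x) = -\alpha \exp(\alpha\, x\cdot e)\Bigl(\alpha F_1(x) + \nabla F_1(x) \cdot e\Bigr). \]
By Assumption \ref{asm: assumption on spatial components of the noise} one has $\inf_{x \in U} F_1(x) > 0$, and since $U$ is bounded and $F_2 = \tfrac{1}{2}\nabla F_1$ is continuous on $\overline{U}$, the quantity $M := \sup_{x \in U} |\nabla F_1(x)|$ is finite. It then suffices to fix
\[ \alpha > \frac{M}{\inf_{x \in U} F_1(x)}, \]
which guarantees $\alpha F_1(x) + \nabla F_1(x) \cdot e > 0$ uniformly on $U$ and hence $\nabla \cdot (F_1 \nabla w)(x) < 0$, recovering the middle line of \eqref{eq: requirements of weight function in the formal proof}.

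Finally, with $\alpha$ fixed as above, the boundedness of $U$ implies that $\sup_{x \in U} \exp(\alpha\, x \cdot e)$ is finite, so choosing $C$ strictly larger than this supremum produces a weight $w$ that is in fact bounded below by a positive constant on $U$. This simultaneously yields non-negativity and the norm equivalence \eqref{eq: equivalence between l1 norm and weighted l1 norm}. No step in this verification is genuinely hard; the only conceptual point worth emphasizing is that the construction relies crucially on the boundedness of $U$ via the finiteness of $\sup_{x\in U}\exp(\alpha\, x\cdot e)$, consistent with the topological obstruction to admissible weights on $\mathbb{T}^d$ or $\mathbb{R}^d$ described in Section~\ref{subsec: key ideas and technical comments}.
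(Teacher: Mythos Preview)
Your proposal is correct and follows essentially the same approach as the paper: both construct $w(x) = -\exp(\alpha\, x\cdot e) + C$ with $e = \tfrac{1}{\sqrt{d}}(1,\dots,1)$, verify the first and third conditions by direct differentiation, and secure the middle condition by choosing $\alpha$ large relative to $\sup|F_2|/\inf F_1$ (your $\nabla F_1 \cdot e$ is exactly the paper's $2F_2\cdot e$). The only cosmetic difference is that the paper phrases the threshold as $F_{1,\inf}\alpha - 2F_{2,\sup} > 0$ while you write $\alpha > M/\inf F_1$ with $M = \sup|\nabla F_1|$; these are equivalent.
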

\begin{proof}
We construct such a weight.
Let $e=\frac{1}{\sqrt{d}}(1,...,1)$, and denote $e_i=\frac{1}{\sqrt{d}}$, for every $i=1,...,d$. Let the weight function defined by 
\begin{align*}
w(x)=-\exp(\alpha x\cdot e)+C, 
\end{align*}
for some $\alpha>0$ to be decided later on, and $C$ a positive constant that guarantees the positivity of $w$. 

A straightforward computation shows that for every $i=1,...,d$, 
\begin{align*}
\partial_{x_i}w(x)=-\alpha\exp(\alpha x\cdot e)e_i<0,\quad \nabla w(x)=-\alpha\exp(\alpha x\cdot e)e, \text{ and }\Delta w(x)=-\alpha^2\exp(\alpha x\cdot e)<0.
\end{align*}
That is, the first and final condition of \eqref{eq: requirements of weight function in the formal proof} hold true.

Next, recall that we assumed that $F_1>0$ in \eqref{eq: Assumption that F1 is strictly positive}.
This implies that 
\begin{align*}
\nabla\cdot(F_1\nabla w)=\nabla\cdot(-F_1\alpha\exp(\alpha x\cdot e)e)
=-\alpha \exp(\alpha x\cdot e)(F_1\alpha+2F_2\cdot e). 
\end{align*}
Let $F_{1,inf}=\inf_{x\in U}F_1(x)>0$ and $F_{2,sup}=\sup_{x\in U}|F_2(x)|$, which both exist by the assumed continuity of $F_i, i=1,2,3$.
We choose $\alpha>0$ sufficiently large such that 
\begin{align*}
F_{1,inf}\alpha-2F_{2,sup}>0,
\end{align*}
which gives the final condition on the weight function.
\end{proof}

In the following, we make a remark illustrating how to formalise the computation in Section \ref{sec: informal computation and choice of weight function}.
\begin{remark}[Formalising the computation of Section \ref{sec: informal computation and choice of weight function}]
    The proof in Section \ref{sec: informal computation and choice of weight function} was not rigorous in the following ways.
    \begin{enumerate}
        \item We used the test functions $w(x)$ and $\chi_i w(x)$ in the kinetic equation to compute the terms on the right hand side of \eqref{eq: difference of two solutions in terms of kinetic equation}. 
        We can not do this because these choice of test functions are not compactly supported in space or velocity variables.
        Furthermore the kinetic function $\chi_i$ is not a smooth function.
        
        We will get around this by using smooth approximations of cutoff functions in space and velocity as part of the test function and appropriately convolving (smoothing) the kinetic function.

        \item As we mentioned in the informal proof, the singular logarithm term appearing in \eqref{eq: computation for log term in informal proof} is singular and only well defined in the presence of cutoff functions away from zero and infinity.

        \item The distributional equalities for the derivatives of the kinetic function $\partial_\xi \chi_i$ and $\nabla \chi_i$ given in \eqref{eq: distributional equality for derivatives of kinetic function} are not rigorous due to the Dirac delta functions.
        
        To deal with this rigorously we smooth the kinetic function via convolution and then integrate by parts.
        The Dirac delta functions in \eqref{eq: distributional equality for derivatives of kinetic function} corresponds to what is left after taking the limits of the convolution kernels.

        \item At various points we integrated by parts and did not pick up additional terms since we assumed boundary data was constant $\rho_b=0$.
        
        To prove the statement for more general  boundary data we use the cutoff in space as part of our test functions.
        When integrating by parts, we avoid boundary terms appearing, but consequently need to control additional terms that appear when the derivative hits the spatial cutoff.
    \end{enumerate}
\end{remark}

We now introduce the cutoff functions and convolution kernels.
To define the spatial cutoff, we will first need to define interior regions of the domain $U$.
\begin{definition}[Interior region of the domain $U$]
Let $d(x,\partial U)$ denote the usual minimum Euclidean distance from a point to a set.
For $\gamma\in(0,\gamma_U)$ we define the interior regions
    \begin{equation*}
        U_\gamma:=\{x\in U : d(x,\partial U)\geq \gamma\}\subset U,\hspace{20pt} \partial U_\gamma:=\{x\in U: d(x,\partial U)=\gamma\}.
    \end{equation*} 
\end{definition}

 Define the real valued positive constant $\gamma_U\in\mathbb{R}_+$ to be the largest distance away from the boundary such that every point in the interior of the domain at most $\gamma_U$ away from the boundary has a unique closest point on the boundary.
    That is,
\begin{equation}\label{eq: definition of gamma_U}
    \gamma_U:=max\{\tilde\gamma\in\mathbb{R}: \forall x\in\partial U_{\tilde\gamma}, \hspace{3pt} argmin\, d(x,\partial U) \hspace{3pt} \text{is a singleton}\},
\end{equation}
where $argmin\, d(x,\partial U)$ denotes the points $x$ that are closest to the boundary $\partial U$.
For a non-trivial $C^2$-regular domain, $\gamma_U$ is strictly positive, see page 153 of Foote \cite{foote1984regularity}.

\begin{definition}[Convolution kernels and cutoff functions]\label{def: convolution kernels and spatial cutoff}
\leavevmode
\begin{enumerate}
    \item Convolution kernel in space and velocity: 
    Let $\kappa_d\in C_c^\infty(\mathbb{R}^d), \kappa_1\in C_c^\infty(\mathbb{R})$ be non-negative and integrate to one.
    For every $\epsilon,\delta\in(0,1)$ we define $\kappa^\epsilon_d:U\to[0,\infty)$ and $\kappa_1^\delta:\mathbb{R}\to[0,\infty)$ to be convolution kernels/ mollifiers of scale $\epsilon$ and $\delta$ on $U$ and $\mathbb{R}$ respectively, defined by
    \begin{equation}\label{eq: definining convolution kernels}
        \kappa^\epsilon_{d}(x)=\frac{1}{\epsilon^d}\kappa_d\left(\frac{x}{\epsilon}\right),\hspace{5pt}\kappa^\delta_{1}(\xi)=\frac{1}{\delta}\kappa_1\left(\frac{\xi}{\delta}\right).
    \end{equation}
    We define the convolution on $U$ as follows. For any integrable function $f$ and every $x\in U_{2\epsilon}$, let 
$$
(f\ast\kappa^\epsilon_d)(x)=\int_{U}f(y)\kappa^\epsilon_d(x-y)dy.
$$
Finally, for convenience we define $\kappa^{\epsilon,\delta}$ to be the product of the convolution kernels
\begin{equation*}
    \kappa^{\epsilon,\delta}(x,y,\xi,\eta):=\kappa^\epsilon_{d}(x-y)\kappa_1^\delta(\xi-\eta), \hspace{15pt} (x-y,\xi,\eta)\in U\times \mathbb{R}^2.
\end{equation*}

\item Cutoff of small velocity:  For every $\beta\in(0,1)$ we define the  piecewise linear function function $\phi_{\beta}:\mathbb{R}\to[0,1]$ by
    \begin{equation*}
        \phi_\beta(\xi)=1 \hspace{5pt} \text{if}\hspace{5pt}\xi\geq\beta, \hspace{10pt}
    \phi_\beta(\xi)=0 \hspace{5pt} \text{if}\hspace{5pt}\xi\leq\beta/2, \hspace{10pt}
    \phi_\beta'(\xi)=\frac{2}{\beta}\mathbbm{1}_{\beta/2\leq\xi\leq\beta}.
    \end{equation*}
    When multiplied with another function $f:\mathbb{R}\to\mathbb{R}$, the product $(f\phi_\beta)$ takes the value $0$ for small arguments $\xi<\beta/2$ and leaves the function $f$ unchanged for $\xi>\beta$.
    \item Cutoff of large velocity: For every $M\in\mathbb{N}$ we define the piecewise linear function $\zeta_M:\mathbb{R}\to[0,1]$ by
    \begin{equation*}
        \zeta_M(\xi)=1 \hspace{5pt} \text{if}\hspace{5pt}\xi\leq M, \hspace{10pt}
    \zeta_M(\xi)=0 \hspace{5pt} \text{if}\hspace{5pt} \xi\geq M+1, \hspace{10pt}
    \zeta_M'(\xi)=-\mathbbm{1}_{M\leq\xi\leq M+1}.
    \end{equation*}
When multiplied with another function $f:\mathbb{R}\to\mathbb{R}$, the product $(f\zeta_M)$ takes the value $0$ for large arguments $\xi>M+1$ and leaves the function $f$ unchanged for $\xi<M$.

 \item Spatial cutoff around boundary: 
  For $0<\gamma'<\gamma<\gamma_U$ we define the piecewise linear function $\iota_{\gamma,\gamma'}:U\to[0,1]$ by
    \begin{equation}\label{spatial cutoff}
        \iota_{\gamma,\gamma'}(x):=
\begin{cases}
1, & \text{if} \hspace{3pt} d(x,\partial U)>\gamma,\\
(\gamma-\gamma')^{-1}d(x,\partial U_{\gamma'}), & \text{if} \hspace{3pt} \gamma'\leq d(x,\partial U)\leq\gamma,\\
0, & \text{if} \hspace{3pt} 0\leq d(x,\partial U)\leq\gamma',
\end{cases}
\end{equation}
When multiplied with another function $f:U\to\mathbb{R}$, the product $(f \iota_{\gamma,\gamma'})$ takes the value $0$ for points within a $\gamma'$-distance of the boundary and leaves the function $f$ unchanged for points more than distance $\gamma$ from the boundary.

\end{enumerate}
\end{definition}
Compared to the work of the first author \cite{Shyam25}, above we explicitly define $\iota_{\gamma,\gamma'}$ to be compactly supported, and so we do not need to employ an additional approximation argument.

We will also need to establish how to define the gradient of the spatial cutoff. 
To do this we use Lemma 3.3 of the work of the first author \cite{Shyam25}.

\begin{lemma}[Derivative of spatial cutoff]\label{remark derivative of spatial cutoff}
The spatial derivative of the cutoff function $\iota_{\gamma,\gamma'}$ is only non-zero on the $(\gamma,\gamma')$-strip around the boundary $U_{\gamma'}\setminus U_{\gamma}$ (otherwise $\iota_{\gamma,\gamma'}$ is constant so its derivative is zero).
For every $x\in U_{\gamma'}\setminus U_\gamma$ there is a unique closest point $x^*_{\gamma'}=x^*_{\gamma'}(x)$ on the boundary layer $\partial U_{\gamma'}$ to $x$.
Letting $v_{x,\gamma'}$ denote the inward pointing unit normal at the boundary $\partial U_{\gamma'}$ to point $x\in U_{\gamma'}\setminus U_{\gamma}$, the first derivative of the spatial cutoff is given by
\begin{equation*}
    \nabla\iota_{\gamma,\gamma'}(x)=(\gamma-\gamma')^{-1}\frac{x-x^*_{\gamma'}}{|x-x^*_{\gamma'}|}\mathbbm{1}_{U_{\gamma'}\setminus U_\gamma}(x)=(\gamma-\gamma')^{-1}v_{x,\gamma}\mathbbm{1}_{U_{\gamma'}\setminus U_\gamma}(x),\quad x\in U_{\gamma'}\setminus U_\gamma.
\end{equation*}
This implies that the size of the first derivative is of the order $(\gamma-\gamma')^{-1}$,
\begin{equation*}
    |\nabla\iota_{\gamma,\gamma'}(x)|=(\gamma-\gamma')^{-1}\mathbbm{1}_{U_{\gamma'}\setminus U_\gamma}(x).
\end{equation*}
\end{lemma}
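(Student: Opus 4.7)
The plan is to decompose the computation using the piecewise-linear structure of $\iota_{\gamma,\gamma'}$ and apply the classical differentiation rule for distance functions to smooth sets. First, since $\iota_{\gamma,\gamma'}$ is constant on the open sets $\{d(x,\partial U) > \gamma\}$ (value $1$) and $\{d(x,\partial U) < \gamma'\}$ (value $0$), its gradient vanishes there; the support of $\nabla \iota_{\gamma,\gamma'}$ is contained in the strip $U_{\gamma'}\setminus U_{\gamma}$, which already accounts for the indicator in the formula.

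On this strip one has by construction
\[
\iota_{\gamma,\gamma'}(x) \;=\; (\gamma-\gamma')^{-1}\, d(x, \partial U_{\gamma'}),
\]
so the task reduces to computing $\nabla d(\cdot,\partial U_{\gamma'})$ on $U_{\gamma'}\setminus U_{\gamma}$. The key geometric input is uniqueness of the nearest-point projection of $x$ onto $\partial U_{\gamma'}$, which I would establish by passing to the signed distance function $b(x) := d(x,\partial U)$. Because $U$ is $C^2$-regular and $\gamma < \gamma_U$, the standard tubular-neighbourhood theorem makes $b$ a $C^2$ function on $\{b \leq \gamma_U\}$, whose level sets $\partial U_{\gamma'} = \{b = \gamma'\}$ are $C^2$ hypersurfaces with unit normal field $\nabla b$. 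Since each point in the strip has $b(x) \leq \gamma < \gamma_U$, the integral curves of $\nabla b$ foliate the strip by normal trajectories and realize the projection onto every level set uniquely.

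Given uniqueness of $x^*_{\gamma'} = x^*_{\gamma'}(x)$, the classical differentiation rule for distance functions to closed sets yields
\[
\nabla d(x,\partial U_{\gamma'}) \;=\; \frac{x - x^*_{\gamma'}}{|x - x^*_{\gamma'}|},
\]
a unit vector. Multiplying by the constant $(\gamma-\gamma')^{-1}$ and restricting to the strip via $\mathbbm{1}_{U_{\gamma'}\setminus U_\gamma}$ gives the displayed gradient formula, with magnitude $(\gamma-\gamma')^{-1}$ on the strip and zero elsewhere. The identification with the inward unit normal $v_{x,\gamma'}$ is immediate: since $x$ lies on the side of $\partial U_{\gamma'}$ farther from $\partial U$ (i.e.\ in the interior of $U_{\gamma'}$), the vector $x - x^*_{\gamma'}$ points into $U_{\gamma'}$, which by definition is the direction of the inward normal; equivalently, it coincides with $\nabla b(x^*_{\gamma'})$.

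The only real obstacle is the justification of the single-valuedness of the projection onto the \emph{level sets} $\partial U_{\gamma'}$, rather than onto $\partial U$ itself, for which \eqref{eq: definition of gamma_U} gives uniqueness directly. The signed distance viewpoint above is the cleanest way around this, since regularity and injectivity of the normal exponential map on $\{b \leq \gamma_U\}$ simultaneously propagate uniqueness of the projection to every intermediate level set. Once this is established, the remainder of the argument is a routine application of the chain rule and the implicit formula for $\nabla d(\cdot, S)$.
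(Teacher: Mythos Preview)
The paper does not actually prove this lemma: it simply imports the statement verbatim from Lemma~3.3 of \cite{Shyam25} and moves on. Your proposal therefore supplies more than the paper does, and the argument you give is correct and essentially the canonical one. You correctly reduce to differentiating $d(\cdot,\partial U_{\gamma'})$ on the strip, invoke the standard formula $\nabla d(x,S)=(x-x^*)/|x-x^*|$ at points of unique projection, and---most importantly---you identify and handle the only genuine subtlety: the definition of $\gamma_U$ in \eqref{eq: definition of gamma_U} guarantees unique projection onto $\partial U$, not onto the intermediate level set $\partial U_{\gamma'}$, and your use of the $C^2$ signed distance function $b$ and the normal foliation to propagate uniqueness to every level set $\{b=\gamma'\}$ is exactly the right fix (equivalently, $d(x,\partial U_{\gamma'})=b(x)-\gamma'$ on the strip, whence $\nabla d(\cdot,\partial U_{\gamma'})=\nabla b$). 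The identification of $(x-x^*_{\gamma'})/|x-x^*_{\gamma'}|$ with the inward normal $v_{x,\gamma'}$ is also correctly justified.
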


To formalise the distributional equality $\nabla\chi_i=\delta_0(\xi-\rho_i)\nabla\rho_i$, we have the below lemma.
The proof can be found in Lemma 3.4 of the work of the first author \cite{Shyam25}.

\begin{lemma}[Integration by parts against kinetic function]\label{lemma integration by parts against kinetic function}
   Let $\psi\in C_c^\infty(U\times(0,\infty))$ be a compactly supported test function (in both arguments) and let $\chi$ be the kinetic function corresponding to solutions of \eqref{SPDE-0} as in \eqref{eq: kinetic function}.
   Then it holds that
\begin{equation*}   
\int_\mathbb{R}\int_{U}\nabla_x\psi(x,\xi)\chi(x,\xi,s)\,dx\,d\xi=-\int_{U}\psi(x,\rho(x,s))\nabla\rho(x,s)\,dx.    
\end{equation*}
\end{lemma}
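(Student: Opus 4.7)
The plan is to reduce the identity to a chain-rule computation applied to a well-chosen primitive of $\psi$ in the velocity variable, thereby bypassing the formal distributional identity $\nabla_x\chi=\delta_0(\xi-\rho)\nabla\rho$. Concretely, I would introduce
\begin{equation*}
\Psi(x,\eta) := \int_0^\eta \psi(x,\xi)\,d\xi,
\end{equation*}
which is smooth in $(x,\eta)$, inherits compact support in $x$ within $U$ from $\psi$, vanishes for $\eta\leq a$ where $a>0$ is chosen so that $\psi(x,\xi)=0$ for $\xi\leq a$, and satisfies $\partial_\eta\Psi=\psi$ together with $\nabla_x\Psi(x,\eta)=\int_0^\eta \nabla_x\psi(x,\xi)\,d\xi$.

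By Fubini's theorem and the definition of $\chi$,
\begin{equation*}
\int_{\mathbb{R}}\int_U \nabla_x\psi(x,\xi)\,\chi(x,\xi,s)\,dx\,d\xi = \int_U \int_0^{\rho(x,s)} \nabla_x\psi(x,\xi)\,d\xi\,dx = \int_U (\nabla_x\Psi)(x,\rho(x,s))\,dx.
\end{equation*}
Applying the chain rule to $G(x):=\Psi(x,\rho(x,s))$ yields
\begin{equation*}
\nabla G(x) = (\nabla_x\Psi)(x,\rho(x,s)) + \psi(x,\rho(x,s))\,\nabla\rho(x,s).
\end{equation*}
Since $\Psi(\cdot,\eta)$ has compact support in $U$ uniformly in $\eta$, so does $G$, and hence $\int_U\nabla G\,dx=0$. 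Rearranging the above identities produces the claim.

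The main obstacle is the rigorous justification of the chain rule, since $\rho$ is only $L^1_x$-valued and does not a priori possess global Sobolev regularity. I would overcome this by exploiting the compact support of $\psi$ in $\xi$ within $(0,\infty)$: only values $\rho(x,s)\in[a,b]$ for some $0<a<b$ contribute to the integrals. On this range, the strict positivity and smoothness of $\Phi'$ guaranteed by Assumption \ref{asm: classic-DK} or Assumption \ref{asm: assumptions on non-linear coeffs for contraction estime}, combined with the local $H^1$-regularity of $\Phi(\rho)$ away from zero and infinity provided by property (2) of Definition \ref{def: stochastic kinetic solution of DK}, ensures that $\nabla\rho$ is well-defined in an $L^2$ sense on $\{a\leq\rho\leq b\}\cap\mathrm{supp}_x\,\psi$. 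A standard mollification argument, applying the chain rule to a spatial regularization $\rho_\varepsilon$ on this set and passing $\varepsilon\to 0$ using strong $L^1$-convergence of $\rho_\varepsilon\to\rho$ together with the continuity of $\psi$ and $\nabla_x\Psi$ in the velocity variable, then completes the justification.
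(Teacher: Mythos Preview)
Your proposal is correct. The paper does not provide a self-contained proof of this lemma but instead refers the reader to Lemma~3.4 of \cite{Shyam25}; your argument via the velocity-primitive $\Psi$ and the chain rule, combined with the local $H^1$-regularity of $\rho$ on the level sets $\{a\le\rho\le b\}$ furnished by property~(2) of Definition~\ref{def: stochastic kinetic solution of DK}, is the standard route to this identity and almost certainly coincides with what appears there.
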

We are now in a position to prove the $L^1_{\omega}L^1_{w;x}$-super contraction.

\begin{theorem}[{$L^1_{\omega}L^1_{w;x}$}-super contraction for solutions to the Dean--Kawasaki equation \eqref{SPDE-0}]\label{thm: L1 omega super contraction}
Suppose that $(\Phi,\sigma)$ satisfy either Assumption \ref{asm: classic-DK} or \ref{asm: assumptions on non-linear coeffs for contraction estime}, and that $(\Phi,\nu)$ further satisfy Assumptions \ref{asm: Assumptions on nu} and \ref{asm: assumption on curly A}.

Suppose $\rho_1$ and $\rho_2$ are two stochastic kinetic solutions of equation \eqref{SPDE-0} in the sense of Definition \ref{def: stochastic kinetic solution of DK} with $\mathcal{F}_0$-measurable initial data $\rho_{1,0},\rho_{2,0}\in L^1(\Omega;L^1_x)$ respectively and with the same boundary data $\rho_b$ satisfying Assumption \ref{asm: assumption on initial data and boundary condition}.
Let further $\mathcal{A}$ be defined as in \eqref{eq: definition of curly A}.
Then we have for every $t\geq0$
\begin{equation*}
    \mathbb{E}\|\rho_1(\cdot,t)-\rho_2(\cdot,t)\|_{L^1_{w;x}}\leq \mathbb{E}\|\rho_{1,0}-\rho_{2,0}\|_{L^1_{w;x}}-c\mathbb{E}\int_0^t\int_U\mathcal{A}(\rho_1(y,s),\rho_2(y,s))\,dy\,ds.
\end{equation*}
\end{theorem}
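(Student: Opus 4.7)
The plan is to formalize Section~\ref{sec: informal computation and choice of weight function} by replacing the distributional test functions $w$ and $\chi_j w$ with two admissible choices built from Definition~\ref{def: convolution kernels and spatial cutoff}: for the linear kinetic equations, $\psi^{M,\beta,\gamma,\gamma'}(x,\xi):=w(x)\iota_{\gamma,\gamma'}(x)\phi_\beta(\xi)\zeta_M(\xi)$, and for the cross term, the space-velocity mollified kinetic function $(\chi_j\ast\kappa^{\epsilon,\delta})(x,\xi,t)\,\psi^{M,\beta,\gamma,\gamma'}(x,\xi)$, both compactly supported in $U\times(0,\infty)$. Applying the kinetic equation~\eqref{eq: kinetic equation} for $\chi_1$ and $\chi_2$, It\^o's product rule to $\chi_1\chi_2 w\iota_{\gamma,\gamma'}\phi_\beta\zeta_M$, and computing the bracket $d\langle\chi_1,\chi_2\rangle$ via Definition~\ref{def: definition of the noise} decomposes the quantity
\begin{equation*}
d\mathbb{E}\!\int_U\!\Bigl(\!\int_{\mathbb{R}}\chi_1+\chi_2-2\chi_1\chi_2\,d\xi\Bigr) w\iota_{\gamma,\gamma'}\phi_\beta\zeta_M\,dx = I_t^{wgt}+I_t^{err}+I_t^{meas}+I_t^{mart}+I_t^{flux}+I_t^{cut},
\end{equation*}
whose first five pieces mirror the informal proof with the general coefficients $(\Phi,\sigma,\nu)$ in place of $(\mathrm{id},\sqrt{\cdot},\nu)$, while $I_t^{cut}$ collects every term in which a spatial derivative has fallen on $\iota_{\gamma,\gamma'}$.

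\textbf{Core cancellations and signs.} For $I_t^{err}$ I would repeat the algebraic factorization of~\cite[Section~5]{FG24} and~\cite[Proposition~4.35]{Shyam25}: after sending $\delta\to0$ and then $\epsilon\to0$, the $F_1$-, $F_2$-, $F_3$-contributions from the Stratonovich--It\^o corrections for $\rho_1,\rho_2$ combine with the quadratic covariation via the distributional identity $\sigma(\rho_j)\delta_0(\rho_i-\rho_j)=\sigma(\rho_i)\delta_0(\rho_i-\rho_j)$, with the monotonicity of $\sigma\sigma'$ from point~(7) of Assumption~\ref{asm: assumptions on non-linear coeffs for contraction estime} ensuring the factorization survives for general $\sigma$. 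The measure piece $I_t^{meas}$ is shown non-positive through~\eqref{eq: bound for kinetic measure}, Cauchy--Schwarz, and the vanishing of the $q_i$ at $\xi=0$, and $\mathbb{E}\,I_t^{mart}=0$ by the martingale property. For $I_t^{wgt}$, Lemma~\ref{lemma integration by parts against kinetic function} converts the convolved spatial gradient of $\chi_j$ into $\delta_0(\xi-\rho_j)\nabla\rho_j$; monotonicity of $\Phi$ gives $\sgn(\rho_1-\rho_2)=\sgn(\Phi(\rho_1)-\Phi(\rho_2))$, so integrating by parts in $x$ turns the parabolic piece into $\int_U |\Phi(\rho_1)-\Phi(\rho_2)|\,\Delta w\,dx$, which is non-positive by the first condition of Proposition~\ref{prop: choice of weight function in informal proof}. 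The singular Stratonovich piece $\tfrac12 F_1[\sigma'(\rho_j)]^2\nabla\rho_j\cdot\nabla w$ has a non-positive contribution for every fixed $(\beta,M)$ thanks to the second condition of Proposition~\ref{prop: choice of weight function in informal proof} and the monotonicity of its integrand, so I discard it before the limits $\beta\to 0$ and $M\to\infty$, thereby sidestepping the $\log\rho$ integrability issue flagged in Section~\ref{subsec: key ideas and technical comments}. Componentwise monotonicity of $\nu$ and the third condition of Proposition~\ref{prop: choice of weight function in informal proof} treat $I_t^{flux}$ analogously, producing $\sum_i\int_U\partial_{x_i}w\,|\nu_i(\rho_1)-\nu_i(\rho_2)|\,dx\le 0$.

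\textbf{Main obstacle: the cutoff limits.} The principal difficulty is to pass to the joint limit $M\to\infty$, $\delta\to0$, $\epsilon\to0$, $\beta\to0$, $\gamma'\to0$, $\gamma\to0$ in this order and prove $\mathbb{E}\,I_t^{cut}\to0$. The $M\to\infty$ limit follows from the decay~\eqref{eq: decay of kinetic measure at infinity} of the kinetic measure combined with the growth bounds~\eqref{eq: growth bound on phi},~\eqref{eq: bound on nu squared and sigma sigma' squared},~\eqref{eq: growth bound on nu} and the oscillation bound~\eqref{eq: oscillations of sigma2 at infinity}; the $\beta\to 0$ limit uses the at-least-linear decay of $\sigma^2$ at zero from point~(5) of Assumption~\ref{asm: assumptions on non-linear coeffs for contraction estime}. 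The spatial cutoff is the hardest: Lemma~\ref{remark derivative of spatial cutoff} gives $|\nabla\iota_{\gamma,\gamma'}|\sim(\gamma-\gamma')^{-1}\mathbbm{1}_{U_{\gamma'}\setminus U_\gamma}$, which concentrates on a thin layer of volume $O(\gamma-\gamma')$. To absorb this singular factor I would use the local regularity~\eqref{eq: local regularity property of stochastic kinetic solution} of $\Phi(\rho_j)-h_{\rho_b}$, a Hardy-type inequality near $\partial U$, and the fact that $\Phi(\rho_1)$ and $\Phi(\rho_2)$ share the boundary trace $\rho_b$, so that $|\Phi(\rho_1)-\Phi(\rho_2)|$ vanishes at $\partial U$ in a quantitative $H^1_0$-sense, directly adapting the cutoff-removal estimates of~\cite[Section~4]{Shyam25}. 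This is the only step that depends essentially on inhomogeneous boundary data.

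\textbf{Conclusion.} After all six limits the surviving inequality reads
\begin{equation*}
\mathbb{E}\|\rho_1(t)-\rho_2(t)\|_{L^1_{w;x}}\le \mathbb{E}\|\rho_{1,0}-\rho_{2,0}\|_{L^1_{w;x}}+\mathbb{E}\!\int_0^t\!\!\int_U\!\Bigl(|\Phi(\rho_1)-\Phi(\rho_2)|\,\Delta w+\sum_{i=1}^d\partial_{x_i}w\,|\nu_i(\rho_1)-\nu_i(\rho_2)|\Bigr)dy\,ds.
\end{equation*}
Substituting the explicit $w(x)=-\exp(\alpha x\cdot e)+C$ from Proposition~\ref{prop: choice of weight function in informal proof}, invoking $e_i=1/\sqrt{d}$ and the inequality in~\eqref{eq: simplifying the super contraction estimate by using def of weight}, and recalling the definition~\eqref{eq: definition of curly A} of $\mathcal{A}$ bounds the right-hand side above by $-c_{\alpha,U,d}\,\mathbb{E}\int_0^t\int_U\mathcal{A}(\rho_1,\rho_2)\,dy\,ds$, which yields the claimed super-contraction.
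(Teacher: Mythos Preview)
Your overall architecture matches the paper, but there is one genuine gap in the weight term analysis. You account for the $\tfrac12 F_1[\sigma'(\rho_j)]^2\nabla\rho_j\cdot\nabla w$ piece and discard it using the second condition of Proposition~\ref{prop: choice of weight function in informal proof}, but you never treat the companion term $\tfrac12\sigma(\rho_j)\sigma'(\rho_j)F_2\cdot\nabla w$ that also appears in $I_t^{wgt}$. In the classical case $\sigma\sigma'\equiv\tfrac12$ is constant and this term cancels after applying $\sgn(\rho_2-\rho_1)$, but under Assumption~\ref{asm: assumptions on non-linear coeffs for contraction estime} it does not vanish and has no definite sign on its own. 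The paper resolves this by invoking point~(8), the bound $|\sigma\sigma'(\xi_1)-\sigma\sigma'(\xi_2)|\le c|\Psi_\sigma(\xi_1)-\Psi_\sigma(\xi_2)|$, to absorb the $\sigma\sigma' F_2$ contribution into the $\Psi_\sigma$ contribution; the combined term is then controlled by $|\Psi_\sigma(\rho_1)-\Psi_\sigma(\rho_2)|\bigl(F_1\Delta w+(c+1)|F_2\cdot\nabla w|\bigr)$, and the weight $\alpha$ must be chosen larger than in Proposition~\ref{prop: choice of weight function in informal proof} so that this modified quantity is negative. You cite point~(7) only in the error term, and never use point~(8), so as written your argument does not close under Assumption~\ref{asm: assumptions on non-linear coeffs for contraction estime}.

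Two smaller procedural points. First, your stated order of limits ($M\to\infty$ outermost-first) is inverted: the paper takes $\epsilon,\delta\to0$ first, then $\gamma'\to0$, then $\gamma\to0$, then $M\to\infty$ and $\beta\to0$; the compactly supported test function is needed while $\epsilon,\delta$ are still present, and the error-term bound is $\limsup_{\epsilon\to0}|I_t^{err}|\le c\delta(\cdots)$ followed by $\delta\to0$, not the reverse. Second, the paper does not show $I_t^{cut}\to0$ but only $\limsup I_t^{cut}\le0$: the $\nabla\iota_{\gamma,\gamma'}$ terms involving $\Phi'\nabla\rho_i$ split (via the fundamental theorem of calculus along normals, not a Hardy inequality) into a boundary-layer integral that vanishes as $\gamma'\to0$ because the traces coincide, plus a signed term that is simply dropped.
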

\begin{proof}
     Denote by $\chi_1,\chi_2$ the kinetic functions corresponding to the stochastic kinetic solutions $\rho_1,\rho_2$ respectively.
    For $\epsilon,\delta\in(0,1)$, $i=1,2$ and $\kappa^{\epsilon,\delta}$ the product of convolution kernels as in Definition \ref{def: convolution kernels and spatial cutoff}, the regularised kinetic function is defined by
\begin{equation*}
    \chi^{\epsilon,\delta}_{t,i}(y,\eta):=(\chi_i(\cdot,\cdot,t)\ast \kappa^{\epsilon,\delta})(y,\eta), \quad t\geq0,y\in U, \eta\in\mathbb{R}.
\end{equation*}
The symmetry of convolution kernels implies that for $\xi,\eta\in\mathbb{R}$ and  $x,y\in U$, we have that
\begin{equation*}
    \nabla_x \kappa^\epsilon_d(y-x) = -\nabla_y \kappa^\epsilon_d(y-x), \quad \partial_\xi\kappa^\delta_1 (\eta-\xi)=-\partial_\eta\kappa^\delta_1 (\eta-\xi).
\end{equation*}
Using the the kinetic equation \eqref{eq: kinetic equation}, it follows that there is a subset of full probability such that the smoothed kinetic function satisfies for every $i=1,2$, $t\geq0$ and $(y,\eta)\in U_{2\epsilon}\times(2\delta,\delta^{-1})$ such that the convolution kernel is compactly supported,
\begin{multline}\label{long expression for chi in uniqueness proof}
\left.\chi^{\epsilon,\delta}_{s,i}(y,\eta)\right|_{s=0}^t:=\left.(\chi_i(\cdot,\cdot,s)\ast \kappa^{\epsilon,\delta})(y,\eta)\right|_{s=0}^t :=\left. \int_{\mathbb{R}}\int_U \chi_i(x,\xi,s)\kappa^{\epsilon,\delta}(y,x,\eta,\xi)\,dx\,d\xi\right|_{s=0}^t\\
    =\nabla_y\cdot\left(\int_0^t \int_{U}\left(\Phi'(\rho_i)\nabla\rho_i+\frac{1}{2}F_1[\sigma'(\rho_i)]^2\nabla\rho_i +\frac{1}{2} \sigma'(\rho_i)\sigma(\rho_i)F_2\right)\kappa^{\epsilon,\delta}(y,x,\eta,\rho_i)\,dx\,ds\right)\\
    +\partial_\eta\left(\int_0^t \int_{\mathbb{R}}\int_{U}\kappa^{\epsilon,\delta}(y,x,\eta,\xi)\,dq_i\right)\\
    -\frac{1}{2}\partial_\eta\left(\int_0^t \int_{U}\left(\sigma'(\rho_i)\sigma(\rho_i)\nabla\rho_i\cdot F_2 + \sigma(\rho_i)^2F_3 \right)\kappa^{\epsilon,\delta}(y,x,\eta,\rho_i)\,dx\,ds\right)\\
     -\int_0^t \int_{U}\kappa^{\epsilon,\delta}(x,y,\rho,\eta)\nabla\cdot (\sigma(\rho_i) \,d{\xi}^F)\,dx -        \int_0^t \int_{U}\kappa^{\epsilon,\delta}(x,y,\rho,\eta)\nabla\cdot\nu(\rho_i)\,dx\,ds.
\end{multline}
Above we used the standard notation $f(s)|_{s=0}^t:=f(t)-f(0)$.
Let $w:U\to\mathbb{R}$ denote the weight function defined in Proposition \ref{prop: choice of weight function in informal proof}.
From the informal proof, we recall that equation \eqref{eq: difference of two solutions in terms of kinetic equation} tells us that to find an expression for the weighted difference in solutions, we look at a regularised version of
 \begin{multline}\label{eq: rigorous equation for difference of solutions in super contraction proof}
 \left.\int_{U}|\rho_1(x,s)-\rho_2(x,s)| w(x)\,dx\right|_{s=0}^t\\
 =\left.\int_{\mathbb{R}}\int_{U} \Bigl(\chi_1(\xi,\rho(x,s))+\chi_2(\xi,\rho(x,s)) -2\chi_1(\xi,\rho(x,s))\chi_2(\xi,\rho(x,s))\Bigr)w(x)\,dx\,d\xi\right|_{s=0}^t.
\end{multline}
The regularised version will involve smoothing the kinetic functions on the right hand side by using mollifiers as in equation \eqref{long expression for chi in uniqueness proof}, and in order to use the kinetic equation \eqref{eq: kinetic equation} we multiply by smooth approximations of the cutoff functions.
  We begin by treating the first two terms on the right hand side of equation \eqref{eq: rigorous equation for difference of solutions in super contraction proof}.
  Testing equation \eqref{long expression for chi in uniqueness proof} against smooth approximations of the product of cutoff functions and the weight $\zeta_M(\eta)\phi_\beta(\eta) \iota_{\gamma,\gamma'}(y) w(y)$, which are smooth and compactly supported, and subsequently taking the limit of the approximations yields for $(\epsilon,\delta)\in (0,\gamma'/4)\times(0,\beta/4)$ such that the convolutions are well defined in $U\times\mathbb{R}$, the decomposition
\begin{align}\label{eq: decomposition of first order terms in uniqueness proof}    \left.\int_{\mathbb{R}}\int_{U}\chi^{\epsilon,\delta}_{s,i}(y,\eta)w(y)\zeta_M(\eta)\phi_\beta(\eta) \iota_{\gamma,\gamma'}(y)\,dy\,d\eta\right|_{s=0}^t= I_t^{i,wgt}+I^{i,cut}_t+I^{i,mart}_t+I^{i,flux}_t,
\end{align}
where the weight term involves the terms with the gradient of the weight function
\begin{multline*}
    I_t^{i,wgt}:= -\int_{\mathbb{R}}\int_0^t \int_{U^2}\left(\Phi'(\rho_i)\nabla\rho_i +\frac{1}{2}F_1[\sigma'(\rho_i)]^2\nabla\rho_i +\frac{1}{2} \sigma'(\rho_i)\sigma(\rho_i)F_2\right)\\
     \times\kappa^{\epsilon,\delta}(y,x,\eta,\rho_i)\zeta_M(\eta)\phi_\beta(\eta)\iota_{\gamma,\gamma'}(y) \cdot\nabla w(y)\,dy\,dx\,ds\,d\eta,
\end{multline*}
the cutoff term involves terms with derivatives of the cutoff terms
\begin{multline*}
    I_t^{i,cut}:= -\int_{\mathbb{R}}\int_0^t \int_{U^2}\left(\Phi'(\rho_i)\nabla\rho_i +\frac{1}{2}F_1[\sigma'(\rho_i)]^2\nabla\rho_i +\frac{1}{2} \sigma'(\rho_i)\sigma(\rho_i)F_2\right)\\
       \times\kappa^{\epsilon,\delta}(y,x,\eta,\rho_i)\zeta_M(\eta)\phi_\beta(\eta) w(y) \cdot\nabla\iota_{\gamma,\gamma'}(y)\,dy\,dx\,ds\,d\eta\\
    -\int_{\mathbb{R}^2}\int_0^t\int_{U^2}\kappa^{\epsilon,\delta}(y,x,\eta,\xi)\partial_\eta(\zeta_M(\eta)\phi_\beta(\eta)) \iota_{\gamma,\gamma'}(y) w(y)\,dq_i\,dy\,d\eta\\ 
    +\frac{1}{2}\int_{\mathbb{R}}\int_0^t \int_{U^2}\left(
    \sigma'(\rho_i)\sigma(\rho_i)\nabla\rho_i\cdot F_2 + \sigma(\rho_i)^2F_3 \right)\\
    \times \kappa^{\epsilon,\delta}(y,x,\eta,\rho_i)\partial_\eta(\zeta_M(\eta)\phi_\beta(\eta)) \iota_{\gamma,\gamma'}(y) w(y)\,dy\,dx\,ds\,d\eta,
\end{multline*}
the martingale term involves the stochastic integral
\begin{equation*}
    I_t^{i,mart}:=-\int_{\mathbb{R}}\int_0^t \int_{U^2}\zeta_M(\eta)\phi_\beta(\eta) \iota_{\gamma,\gamma'}(y) w(y)\kappa^{\epsilon,\delta}(y,x,\eta,\rho_i)\nabla\cdot (\sigma(\rho_i) \,d{\xi}^F)\,dy\,dx\,ds\,d\eta,
\end{equation*}
and finally the flux term being the term that involves the non-linearity $\nu$,
\begin{equation*}
    I_t^{flux}:= -        \int_{\mathbb{R}}\int_0^t \int_{U^2}\zeta_M(\eta)\phi_\beta(\eta) \iota_{\gamma,\gamma'}(y) w(y)\kappa^{\epsilon,\delta}(y,x,\eta,\rho_i)\nabla\cdot\nu(\rho_i)\,dy\,dx\,ds\,d\eta.
\end{equation*}

To obtain an expression for the final term in equation \eqref{eq: rigorous equation for difference of solutions in super contraction proof}, we introduce the notation $(x,\xi)\in U \times \mathbb{R}$ for arguments in $\rho_1$ and related quantities, and $(x',\xi')\in U\times\mathbb{R}$ for arguments of $\rho_2$ and related quantities.
For convenience we introduce the shorthand notation
\begin{equation}\label{eq: notation for bar k epsilon delta}
    \bar{k}_{s,1}^{\epsilon,\delta}(x,y,\eta):=\kappa^{\epsilon,\delta}(x,y,\eta,\rho_1(x,s)), \quad \bar{k}_{s,2}^{\epsilon,\delta}(x',y,\eta):=\kappa^{\epsilon,\delta}(x',y,\eta,\rho_2(x',s)).
\end{equation}
The stochastic product rule tells us that for the final term in equation \eqref{eq: rigorous equation for difference of solutions in super contraction proof}, we have almost surely for $\beta\in(0,1)$, $M\in\mathbb{N}$, $\gamma'<\gamma\in(0,\gamma_U)$, $\delta\in(0,\beta/4)$, $\epsilon\in(0,\gamma'/4)$, that
\begin{align}\label{equation for stochastic product rule for uniqueness proof}   &\left.\int_{\mathbb{R}}\int_{U}\chi^{\epsilon,\delta}_{s,1}(y,\eta) \chi^{\epsilon,\delta}_{s,2}(y,\eta)w(y) \zeta_M(\eta)\phi_\beta(\eta) \iota_{\gamma,\gamma'}(y)\,dy\,d\eta\right|_{s=0}^t\nonumber\\
    &=\int_{\mathbb{R}}\int_0^t\int_U \Bigl(\chi^{\epsilon,\delta}_{s,1}(y,\eta) d\chi^{\epsilon,\delta}_{s,2}(y,\eta) +  \chi^{\epsilon,\delta}_{s,2}(y,\eta) d\chi^{\epsilon,\delta}_{s,1}(y,\eta)\nonumber\\
    &\hspace{150pt}+ \langle\chi^{\epsilon,\delta}_{1},\chi^{\epsilon,\delta}_{2}\rangle_s(y,\eta)\Bigr)w(y)\zeta_M(\eta)\phi_\beta(\eta) \iota_{\gamma,\gamma'}(y)\,dy\,d\eta \nonumber\\
    &=\int_{\mathbb{R}}\int_U \left(\chi^{\epsilon,\delta}_{s,1}(y,\eta) \left[\left.\chi^{\epsilon,\delta}_{s,2}(y,\eta)\right|_{s=0}^t\right] +  \chi^{\epsilon,\delta}_{s,2}(y,\eta) \left[\left.\chi^{\epsilon,\delta}_{s,1}(y,\eta)\right|_{s=0}^t\right]\right.\nonumber\\
    &\hspace{150pt}+\left.
\left[\left.\langle\chi^{\epsilon,\delta}_{1},\chi^{\epsilon,\delta}_{2}\rangle_s(y,\eta)\right|_{s=0}^t\right] \right) w(y)\zeta_M(\eta)\phi_\beta(\eta) \iota_{\gamma,\gamma'}(y)\,dy\,d\eta,
    \end{align}
where we smoothed the kinetic function so are allowed to use it as part of an admissible test function.
Using equation \eqref{long expression for chi in uniqueness proof} the first term in the final line of \eqref{equation for stochastic product rule for uniqueness proof} can be evaluated as
\begin{align}\label{eq: decomposition of first two terms in the mixed term}
    \int_{\mathbb{R}}\int_U&   \chi^{\epsilon,\delta}_{s,1}(y,\eta) \left.\left[\chi^{\epsilon,\delta}_{s,2}(y,\eta)\right]\right|_{s=0}^tw(y)\zeta_M(\eta)\phi_\beta(\eta) \iota_{\gamma,\gamma'}(y)\,dy\,d\eta\nonumber\\
    &= \int_{\mathbb{R}}\int_U   \zeta_M(\eta)\phi_\beta(\eta) \iota_{\gamma,\gamma'}(y)w(y)\chi^{\epsilon,\delta}_{s,1}(y,\eta)\left[\nabla_y\cdot \left(\int_0^t\int_{U}\left(\Phi'(\rho_2)\nabla\rho_2\right)\bar{k}^{\epsilon,\delta}_{s,2}\,dx'\,ds\right)\nonumber\right.\nonumber\\
    &+ \nabla_y \cdot\left(\int_0^t\int_{U}\left(\frac{1}{2}F_1[\sigma'(\rho_2)]^2\nabla\rho_2 +\frac{1}{2} \sigma'(\rho_2)\sigma(\rho_2)F_2\right)\bar{k}^{\epsilon,\delta}_{s,2}\,dx'\,ds\right)\nonumber\\
    &+\partial_\eta \left(\int_{\mathbb{R}}\int_0^t\int_{U}\kappa^{\epsilon,\delta}(x',y,\xi,\eta)\,dq_2\right)\nonumber\\
    &-\frac{1}{2}\partial_\eta \left(\int_0^t\int_{U}\left(
    \sigma'(\rho_2)\sigma(\rho_2)\nabla\rho_2\cdot F_2 + \sigma(\rho_2)^2F_3 \right)\bar{k}^{\epsilon,\delta}_{s,2}\,dx'\right)\,ds\nonumber\\
    &\left. - \int_0^t\int_{U}\bar{k}^{\epsilon,\delta}_{s,2}\nabla\cdot (\sigma(\rho_2) \,d{\xi}^F)\,dx' -         \int_0^t\int_{U}\bar{k}^{\epsilon,\delta}_{s,2}\nabla\cdot\nu(\rho_2)\,dx'\,ds\right]\,dy\,d\eta.
\end{align} 
Just as with the first order terms, we integrate by parts and move derivatives onto (smooth approximations of) the product $w(y)\zeta_M(\eta)\phi_\beta(\eta)\iota_{\gamma,\gamma'}(y)\chi^{\epsilon,\delta}_{s,1}(y,\eta)$, which are smooth and compactly supported so can be done using classical integration by parts. 
Using the product rule when integrating in the spatial variable and then the integration by part result of Lemma \ref{lemma integration by parts against kinetic function} gives
\begin{align}\label{eq: integrating by parts when gradient falls on the kinetic function}
    \nabla_y \chi^{\epsilon,\delta}_{s,1}(y,\eta)&:=\int_{\mathbb{R}}\int_U \chi_1(x,\xi,s)\nabla_y\kappa^{\epsilon,\delta}(y,x,\eta,\xi)\,dx\,d\xi\nonumber\\
    &=-\int_{\mathbb{R}}\int_U \chi_1(x,\xi,s)\nabla_x\kappa^{\epsilon,\delta}(y,x,\eta,\xi)\,dx\,d\xi\nonumber\\
    &=-\int_{U}\bar{k}^{\epsilon,\delta}_{s,i}\nabla\rho_1(x,s)\,dx.
\end{align}
We consequently obtain the decomposition 
\begin{multline*}
   \int_{\mathbb{R}}\int_0^t\int_U   \chi^{\epsilon,\delta}_{s,1}(y,\eta) d\chi^{\epsilon,\delta}_{s,2}(y,\eta)\zeta_M(\eta)\phi_\beta(\eta) \iota_{\gamma,\gamma'}(y)\,dy\,d\eta\\
   =I_t^{1,2,wgt}+I_t^{1,2,err}+I_t^{1,2,meas}+I_t^{1,2,cut}+I_t^{1,2,mart}+I_t^{1,2,flux}.
\end{multline*}
Adding the term 
\begin{equation}\label{eq: artifically added error term}
    \int_{\mathbb{R}}\int_0^t\int_{U^3}[\Phi'(\rho_1)]^{1/2}[\Phi'(\rho_2)]^{1/2}\nabla\rho_1\cdot\nabla\rho_2\bar{k}^{\epsilon,\delta}_{s,1}\bar{k}^{\epsilon,\delta}_{s,2}\phi_\beta(\eta)\zeta_M(\eta)\iota_{\gamma,\gamma'}(y)w(y)\,dx\,dx'\,dy\,ds\,d\eta
\end{equation}
to the error term and taking it away from the measure term, which enables us to conveniently factorise the error terms, gives for each term separately:

Weight term
\begin{multline*}
    I_t^{1,2,wgt}=-\int_{\mathbb{R}}\int_0^t\int_{U^2}   \zeta_M(\eta)\phi_\beta(\eta) \iota_{\gamma,\gamma'}(y)\chi^{\epsilon,\delta}_{s,1}(y,\eta)\nabla w(y)\cdot\\
    \times\left(\Phi'(\rho_2)\nabla\rho_2+\frac{1}{2}F_1[\sigma'(\rho_2)]^2\nabla\rho_2 +\frac{1}{2} \sigma'(\rho_2)\sigma(\rho_2)F_2\right)\bar{k}^{\epsilon,\delta}_{s,2} \,dx'\,dy\,ds\,d\eta,
\end{multline*}

the error term
\begin{align}\label{eq: for 1 2 error term}
    &I_t^{1,2,err}=-\int_{\mathbb{R}}\int_0^t\int_{U^3}  \zeta_M(\eta)\phi_\beta(\eta) \iota_{\gamma,\gamma'}(y)w(y)\Phi'(\rho_2)\nabla\rho_2\cdot\nabla\rho_1\bar{k}^{\epsilon,\delta}_{s,1}\bar{k}^{\epsilon,\delta}_{s,2}\,dx\,dx'\,dy\,ds\,d\eta\nonumber\\
     &-\int_{\mathbb{R}}\int_0^t\int_{U^3}  \zeta_M(\eta)\phi_\beta(\eta) \iota_{\gamma,\gamma'}(y)w(y)\left(\frac{1}{2}F_1[\sigma'(\rho_2)]^2\nabla\rho_2\cdot\nabla\rho_1 +\frac{1}{2} \sigma'(\rho_2)\sigma(\rho_2)F_2\cdot\nabla\rho_1\right)\nonumber\\
     &\hspace{150pt}\times\bar{k}^{\epsilon,\delta}_{s,1}\bar{k}^{\epsilon,\delta}_{s,2}\,dx\,dx'\,dy\,ds\,d\eta\nonumber\\
    &-\frac{1}{2}\int_{\mathbb{R}}\int_0^t\int_{U^3}   \zeta_M(\eta)\phi_\beta(\eta) \iota_{\gamma,\gamma'}(y)w(y)\left(
    \sigma'(\rho_2)\sigma(\rho_2)\nabla\rho_2\cdot F_2 + \sigma(\rho_2)^2F_3 \right)\bar{k}^{\epsilon,\delta}_{s,2}\bar{k}^{\epsilon,\delta}_{s,1}\,dx\,dx'\,dy\,ds\,d\eta\nonumber\\
    &+\int_{\mathbb{R}}\int_0^t\int_{U^3}[\Phi'(\rho_1)]^{1/2}[\Phi'(\rho_2)]^{1/2}\nabla\rho_1\cdot\nabla\rho_2\bar{k}^{\epsilon,\delta}_{s,1}\bar{k}^{\epsilon,\delta}_{s,2}\phi_\beta(\eta)\zeta_M(\eta)\iota_{\gamma,\gamma'}(y)w(y)\,dx\,dx'\,dy\,ds\,d\eta,
\end{align}

the measure term
\begin{align}\label{eq: for 1 2 measure term}
    I_t&^{1,2,meas}= \int_{\mathbb{R}^2}\int_0^t\int_{U^3}   \zeta_M(\eta)\phi_\beta(\eta)\iota_{\gamma,\gamma'}(y)w(y)\kappa^{\epsilon,\delta}(x',y,\xi,\eta) \bar{k}^{\epsilon,\delta}_{s,1}\,dq_2(x',\xi,s)\,dx\,dy\,d\eta\nonumber\\
    &-\int_{\mathbb{R}}\int_0^t\int_{U^3}[\Phi'(\rho_1)]^{1/2}[\Phi'(\rho_2)]^{1/2}\nabla\rho_1\cdot\nabla\rho_2\bar{k}^{\epsilon,\delta}_{s,1}\bar{k}^{\epsilon,\delta}_{s,2}\phi_\beta(\eta)\zeta_M(\eta)\iota_{\gamma,\gamma'}(y)w(y)\,dx\,dx'\,dy\,ds\,d\eta,
\end{align}

the cutoff term
\begin{align*}
    &I^{1,2,cut}_t = -\int_{\mathbb{R}^2}\int_0^t\int_{U^2}   \partial_\eta(\zeta_M(\eta)\phi_\beta(\eta))\chi^{\epsilon,\delta}_{s,1}(y,\eta) \iota_{\gamma,\gamma'}(y) w(y)\kappa^{\epsilon,\delta}(x',y,\xi,\eta)\,dq_2(x',\xi,s)\,dy\,d\eta\nonumber\\
    &+\frac{1}{2}\int_{\mathbb{R}}\int_0^t\int_{U^2}   \partial_\eta(\zeta_M(\eta)\phi_\beta(\eta)) \chi^{\epsilon,\delta}_{s,1}(y,\eta) \iota_{\gamma,\gamma'}(y) w(y)\left(
    \sigma'(\rho_2)\sigma(\rho_2)\nabla\rho_2\cdot F_2 + \sigma(\rho_2)^2F_3 \right)\nonumber\\
    &\hspace{150pt}\times\bar{k}^{\epsilon,\delta}_{s,2}\,dx'\,dy\,ds\,d\eta\nonumber\\
    &-\int_{\mathbb{R}}\int_0^t\int_{U^2}  \zeta_M(\eta)\phi_\beta(\eta) \chi^{\epsilon,\delta}_{s,1}(y,\eta)w(y)\nabla_y\iota_{\gamma,\gamma'}(y)\cdot\nonumber\\
    &\hspace{70pt}\left(\Phi'(\rho_2)\nabla\rho_2+\frac{1}{2}F_1[\sigma'(\rho_2)]^2\nabla\rho_2 +\frac{1}{2} \sigma'(\rho_2)\sigma(\rho_2)F_2\right) \bar{k}^{\epsilon,\delta}_{s,2}\,dx'\,dy\,ds\,d\eta,
\end{align*}

the martingale term
\begin{align*}
     I^{1,2,mart}_t&=-\int_{\mathbb{R}}\int_0^t\int_{U^2}   \bar{k}^{\epsilon,\delta}_{s,2}\zeta_M(\eta)\phi_\beta(\eta) \iota_{\gamma,\gamma'}(y)w(y)\chi^{\epsilon,\delta}_{s,1}(y,\eta)\nabla\cdot (\sigma(\rho_2) \,d{\xi}^F)\,dx'\,dy\,d\eta,
\end{align*}

And finally the flux term
\begin{align*}\label{eq: for mixed conservative term}
    I^{1,2,flux}_t&=-\int_{\mathbb{R}}\int_0^t\int_{U^2}   \bar{k}^{\epsilon,\delta}_{s,2}\zeta_M(\eta)\phi_\beta(\eta) \iota_{\gamma,\gamma'}(y) w(y)\chi^{\epsilon,\delta}_{s,1}(y,\eta)\nabla\cdot\nu(\rho_2)\,dx'\,dy\,ds\,d\eta.
\end{align*}
In the expressions above, we have an extra spatial integral for the terms when the derivative falls on the kinetic function due to \eqref{eq: integrating by parts when gradient falls on the kinetic function}.

An analogous decomposition holds for the second term on the right hand side of \eqref{equation for stochastic product rule for uniqueness proof}. 
The the corresponding weight, error, measure, cutoff, martingale and flux terms up to time $t\geq0$ are given by 
\begin{equation*}
    I^{2,1,\cdot}_t,
\end{equation*}
where we again artificially add the term \eqref{eq: artifically added error term} to the error term and take it away from the measure term.

Finally we deal with the quadratic covariation term in equation \eqref{equation for stochastic product rule for uniqueness proof}.
The computation in equation \eqref{eq: expression for cross variation term} is evaluated in two steps. 
Firstly, using convolution kernels to smooth the kinetic functions, it holds that
\begin{multline*}
d\langle\chi^{\epsilon,\delta}_{\cdot,1},\chi^{\epsilon,\delta}_{\cdot,2}\rangle_s(y,\eta):=d\langle(\chi_1\ast \kappa^{\epsilon,\delta}),(\chi_2\ast \kappa^{\epsilon,\delta})\rangle_s(y,\eta)\\
=\int_{U^2}\int_{\mathbb{R}^2}d\langle \chi_1,\chi_2\rangle_s \kappa^{\epsilon,\delta}_{s,1}(y,x,\eta,\xi)\kappa^{\epsilon,\delta}_{s,2}(y,x',\eta,\xi')\,d\xi\,d\xi'\,dx\,dx'\\
=\int_{U^2}\nabla\cdot\left(\sigma(\rho_1)\sum_{k=1}^\infty f_k(x)d\beta_k(s)\right)\nabla\cdot\left(\sigma(\rho_2)\sum_{j=1}^\infty f_j(x')d\beta_j(s)\right)
\kappa^{\epsilon,\delta}_{s,1}(x,y,\rho_1,\eta)\kappa^{\epsilon,\delta}_{s,2}(x',y,\rho_2,\eta)\,dx\,dx'\\
=\sum_{j,k=1}^\infty\int_{U^2}\left(f_k\sigma'(\rho_1)\nabla\rho_1+\sigma(\rho_1)\nabla f_k\right)\left(f_j\sigma'(\rho_2)\nabla\rho_2+\sigma(\rho_2)\nabla f_j\right)d\langle \beta_k(\cdot),\beta_j(\cdot)\rangle_s
\bar{k}^{\epsilon,\delta}_{s,1}\bar{k}^{\epsilon,\delta}_{s,2}dx\,dx'\\
=\sum_{k=1}^\infty\int_{U^2}\left(f_k\sigma'(\rho_1)\nabla\rho_1+\sigma(\rho_1)\nabla f_k\right)\left(f_k\sigma'(\rho_2)\nabla\rho_2+\sigma(\rho_2)\nabla f_k\right)\bar{k}^{\epsilon,\delta}_{s,1}\bar{k}^{\epsilon,\delta}_{s,2}\,dx\,dx' \,ds.
\end{multline*}
In the second step we integrate against smooth approximations of the product $\phi_\beta(\eta) \zeta_M(\eta)\iota_{\gamma,\gamma'}(y) w(y)$. Consequently one obtains that that quadratic covariation term is given by
    \begin{multline}\label{eq: equality for quadratic variation term}
        \int_\mathbb{R}\int_0^t\int_U d\langle\chi^{\epsilon,\delta}_{\cdot,1},\chi^{\epsilon,\delta}_{\cdot,2}\rangle_s(y,\eta)\, \phi_\beta(\eta) \zeta_M(\eta) \iota_{\gamma,\gamma'}(y)w(y)\,dy\,d\eta\\
        =\sum_{k=1}^{\infty}\int_\mathbb{R}\int_0^t\int_{U^3} \left(f_k\sigma'(\rho_1)\nabla\rho_1+\sigma(\rho_1)\nabla f_k\right)\cdot\left(f_k\sigma'(\rho_2)\nabla\rho_2+\sigma(\rho_2)\nabla f_k\right)\\
        \times\bar{k}^{\epsilon,\delta}_{s,1}\bar{k}^{\epsilon,\delta}_{s,2} \phi_\beta(\eta) \zeta_M(\eta) \iota_{\gamma,\gamma'}(y) w(y)\,dx\,dx'\,dy\,ds\,d\eta.
    \end{multline} 

Putting \eqref{equation for stochastic product rule for uniqueness proof} and subsequent computations from \eqref{eq: decomposition of first two terms in the mixed term}-\eqref{eq: equality for quadratic variation term} together, it follows that for the mixed term, we have the decomposition
\begin{multline}\label{mixed term in uniqueness}
    \left.\int_{\mathbb{R}}\int_{U}\chi^{\epsilon,\delta}_{s,1}(y,\eta)\chi^{\epsilon,\delta}_{s,2}(y,\eta) w(y)\zeta_M(\eta)\phi_\beta(\eta) \iota_{\gamma,\gamma'}(y)\,dy\,d\eta\right|_{s=0}^t\\
=I_t^{mix,wgt}+I_t^{mix,err}+I_t^{mix,meas}+I_t^{mix,cut}+I_t^{mix,mart}+I_t^{mix,flux}.
\end{multline}
All four terms of the quadratic covariation term \eqref{eq: equality for quadratic variation term} are put into the error term
\begin{equation*}
    I_t^{mix,err}=I_t^{1,2,err} + I_t^{2,1,err}+\,\text{quadratic covariation contribution}.
\end{equation*}

The contribution of the remaining terms just comes from the first two components of \eqref{equation for stochastic product rule for uniqueness proof},
\begin{equation*}
    I_t^{mix,wgt,meas,cut,mart,flux}=I_t^{1,2,wgt,meas,cut,mart,flux}+I_t^{2,1,wgt,meas,cut,mart,flux}.
\end{equation*}

Returning back to the equation of interest that governs the $L^1_x$-difference of two solutions, equation \eqref{eq: rigorous equation for difference of solutions in super contraction proof}, we have the decomposition
\begin{equation}\label{equation for difference of two solutions in uniqueness proof}
    \left.\int_{\mathbb{R}}\int_U \left(\chi^{\epsilon,\delta}_{s,1}+ \chi^{\epsilon,\delta}_{s,2} -2\chi^{\epsilon,\delta}_{s,1}\chi^{\epsilon,\delta}_{s,2}\right) w\phi_\beta \zeta_M\iota_{\gamma,\gamma'}\right|_{s=0}^t=  I_t^{err}+ I_t^{meas} + I_t^{wgt} +  I_t^{mart} + I_t^{cut}+ I_t^{flux}.
\end{equation}

The error and measure term and arise solely from the mixed term \eqref{mixed term in uniqueness},
\begin{equation}\label{eq: equation for error and measure term in terms of the mixed error and measure terms}
    I_t^{err,meas}=-2I_t^{mix,err,meas}
\end{equation}
the final term on the left hand side of \eqref{equation for difference of two solutions in uniqueness proof}.
The remaining terms have a contribution from all three terms on the left hand side of equation \eqref{equation for difference of two solutions in uniqueness proof}, and are given respectively by
\begin{equation}\label{eq: expression for martingale cut and cons terms}
    I_t^{wgt,mart,cut,flux}= I_t^{1,wgt,mart,cut,flux}+ I_t^{2,wgt,mart,cut,flux}-2 I_t^{mix,wgt,mart,cut,flux}.
\end{equation}
We deal with each term on the right hand side of \eqref{equation for difference of two solutions in uniqueness proof} separately.

\textbf{Measure term.}

From equations \eqref{eq: for 1 2 measure term} and \eqref{eq: equation for error and measure term in terms of the mixed error and measure terms}, the measure term is 
\begin{multline*}
    I_t^{meas}= -2\int_{\mathbb{R}^2}\int_0^t\int_{U^3}   \zeta_M(\eta)\phi_\beta(\eta)\iota_{\gamma,\gamma'}(y) w(y)\kappa^{\epsilon,\delta}(x,y,\xi,\eta) \bar{k}^{\epsilon,\delta}_{s,2}\,dq_1(x,\xi,s)\,dx'\,dy\,d\eta\\
    -2\int_{\mathbb{R}^2}\int_0^t\int_{U^3}   \zeta_M(\eta)\phi_\beta(\eta)\iota_{\gamma,\gamma'}(y) w(y)\kappa^{\epsilon,\delta}(x',y,\xi,\eta) \bar{k}^{\epsilon,\delta}_{s,1}\,dq_2(x',\xi,s)\,dx\,dy\,d\eta\\
    +4\int_{\mathbb{R}}\int_0^t\int_{U^3}[\Phi'(\rho_1)]^{1/2}[\Phi'(\rho_2)]^{1/2}\nabla\rho_1\cdot\nabla\rho_2\bar{k}^{\epsilon,\delta}_{s,1}\bar{k}^{\epsilon,\delta}_{s,2}\phi_\beta(\eta)\zeta_M(\eta)\iota_{\gamma,\gamma'}(y) w(y)\,dx\,dx'\,dy\,ds\,d\eta.
\end{multline*}
H\"older's inequality and the distributional equality \eqref{eq: bound for kinetic measure} from the definition of stochastic kinetic solutions tells us that
\begin{equation}
    I_{t}^{meas}\leq 0.
\end{equation} 

\textbf{Error term.}

Combining equations \eqref{eq: for 1 2 error term} and \eqref{eq: equality for quadratic variation term}, it follows that we have a convenient factorisation for the error term similar to the factorisation of the error term in the formal proof.

Specifically, using the dominated convergence theorem and the notation\footnote{This is similar to \eqref{eq: notation for bar k epsilon delta} without the $\epsilon$-dependence.} $\bar{k}^\delta_{s,i}(y,\eta):=\kappa_1^\delta(\rho_i(y,s)-\eta)$ for $i=1,2$, for every fixed $\gamma'<\gamma\in(0,\gamma_U)$, $\delta\in(0,\beta/4)$, after taking the limit as $\epsilon\to0$ the error term is
\small
\begin{multline}\label{eq: full error term}
    \lim_{\epsilon\to0}I_t^{err}=2\int_{\mathbb{R}}\int_0^t\int_U\left((\Phi'(\rho_1))^{1/2}-(\Phi'(\rho_2))^{1/2}\right)^2\nabla\rho_1\cdot\nabla\rho_2\bar{k}^\delta_{s,1}\bar{k}^\delta_{s,2}\phi_\beta(\eta)\zeta_M(\eta)\iota_{\gamma,\gamma'}(y) w(y)\,dy\,ds\,d\eta\\
+\int_\mathbb{R}\int_0^t\int_U\left(F_1\left(\sigma'(\rho_1)-\sigma'(\rho_2)\right)^2\nabla\rho_1\cdot\nabla\rho_2+F_3\left(\sigma(\rho_1)-\sigma(\rho_2)\right)^2\right)\bar{k}^\delta_{s,1}\bar{k}^\delta_{s,2}\phi_\beta(\eta)\zeta_M(\eta)\iota_{\gamma,\gamma'}(y) w(y)\,dy\,ds\,d\eta\\
    +\int_\mathbb{R}\int_0^t\int_U\left(\sigma'(\rho_1)\sigma(\rho_1)+\sigma'(\rho_2)\sigma(\rho_2)-2\sigma'(\rho_1)\sigma(\rho_2)\right)F_2\cdot\nabla\rho_1\bar{k}^\delta_{s,1}\bar{k}^\delta_{s,2}\phi_\beta(\eta)\zeta_M(\eta)\iota_{\gamma,\gamma'}(y) w(y)\,dy\,ds\,d\eta\\
    +\int_\mathbb{R}\int_0^t\int_U\left(\sigma'(\rho_1)\sigma(\rho_1)+\sigma'(\rho_2)\sigma(\rho_2)-2\sigma'(\rho_2)\sigma(\rho_1)\right)F_2\cdot\nabla\rho_2\bar{k}^\delta_{s,1}\bar{k}^\delta_{s,2}\phi_\beta(\eta)\zeta_M(\eta)\iota_{\gamma,\gamma'}(y) w(y)\,dy\,ds\,d\eta.
\end{multline}
\normalsize
The definition of the convolution kernels imply that whenever we have 
\begin{equation*}\label{eq: cutoffs and convolution kernels are not zero}
    \bar{k}^\delta_{s,1}(y,\eta)\bar{k}^\delta_{s,2}(y,\eta)\phi_\beta(\eta)\zeta_M(\eta)\iota_{\gamma,\gamma'}(y)\neq 0,
\end{equation*}
for $\delta\in(0,\beta/4)$, we have owing to Assumption \ref{asm: assumptions on non-linear coeffs for contraction estime}, the local Lipschitz regularity of $\Phi'$, the local $\frac{1}{2}$-H\"older continuity of the square root, the triangle inequality and the local boundedness and Lipschitz regularity of $\sigma$ and $\sigma'$ respectively, that for a constant $c\in(0,\infty)$ depending on $M$ and $\beta$,
\begin{multline*}
    \left((\Phi'(\rho_1))^{1/2}-(\Phi'(\rho_2))^{1/2}\right)^2+\left(\sigma'(\rho_1)-\sigma'(\rho_2)\right)^2+\left(\sigma(\rho_1)-\sigma(\rho_2)\right)^2\\
    +\left|\sigma'(\rho_1)\sigma(\rho_1)+\sigma'(\rho_2)\sigma(\rho_2)-2\sigma'(\rho_1)\sigma(\rho_2)\right|+\left|\sigma'(\rho_1)\sigma(\rho_1)+\sigma'(\rho_2)\sigma(\rho_2)-2\sigma'(\rho_2)\sigma(\rho_1)\right|
    \\\leq c\delta\mathbbm{1}_{0\leq|\rho_1(y,s)-\rho_2(y,s)|\leq c\delta}.
\end{multline*}
Coming back to the error term \eqref{eq: full error term}, the boundedness of $F_i$ for $i=1,2,3$ and H\"older's and Young's inequalities prove that there exists a constant $c\in (0,\infty)$ depending on $M$ and $\beta$ satisfying for every $t\geq0$,
\begin{multline}\label{eq: bounding the error term by a factor of delta}
    \limsup_{\epsilon\to0}|I_t^{err}|\\
    \leq c\int_{\mathbb{R}}\int_0^t\int_{U}\delta \mathbbm{1}_{0\leq|\rho_1(y,s)-\rho_2(y,s)|\leq c\delta}\left(1+|\nabla\rho_1|^2+|\nabla\rho_2|^2\right)\bar{k}^\delta_{s,1}\bar{k}^\delta_{s,2}\phi_\beta(\eta)\zeta_M(\eta)\iota_{\gamma,\gamma'}(y) w(y)\,dy\,ds\,d\eta.
\end{multline}
    It is then a consequence of the definition of the cutoff functions and convolution kernels, the local regularity property\footnote{The local regularity property in \eqref{eq: local regularity property of stochastic kinetic solution} is stated for $\Phi(\rho)$, but since we assume $\Phi$ is strictly increasing it also holds that locally $\rho\in H^1(U)$.} of stochastic kinetic solutions \eqref{eq: local regularity property of stochastic kinetic solution}, the dominated convergence theorem and equation \eqref{eq: bounding the error term by a factor of delta} that almost surely, for every $t\geq0$,
\begin{equation*}
    \limsup_{\delta\to0}\left(\limsup_{\epsilon\to0}|I_t^{err}|\right)=0.
\end{equation*}

\textbf{Weight term.}

Taking the $\epsilon,\delta\to 0$ limits and using that the sign function is odd to combine terms gives that

\small
\begin{multline}\label{eq: epsilon and delta limit of weight term}
    \lim_{\epsilon,\delta\to0}I_t^{wgt}=\int_0^t\int_U \left(\Phi'(\rho_1)\zeta_M(\rho_1)\phi_\beta(\rho_1)\nabla\rho_1 -\Phi'(\rho_2)\zeta_M(\rho_2)\phi_\beta(\rho_2)\nabla\rho_2\right)\cdot\nabla w(y) \iota_{\gamma,\gamma'}(y) \sgn(\rho_2-\rho_1)\,dy\,ds\\
    +\int_0^t\int_U \left(\frac{1}{2}F_1[\sigma'(\rho_1)]^2\zeta_M(\rho_1)\phi_\beta(\rho_1)\nabla\rho_1 -\frac{1}{2}F_1[\sigma'(\rho_2)]^2\zeta_M(\rho_2)\phi_\beta(\rho_2)\nabla\rho_2\right)\cdot\nabla w(y) \iota_{\gamma,\gamma'}(y) \sgn(\rho_2-\rho_1)\,dy\,ds\\
    \int_0^t\int_U \left(\frac{1}{2} \sigma'(\rho_1)\sigma(\rho_1)\zeta_M(\rho_1)\phi_\beta(\rho_1)F_2 -\frac{1}{2} \sigma'(\rho_2)\sigma(\rho_2)\zeta_M(\rho_2)\phi_\beta(\rho_2)F_2\right)\cdot\nabla w(y) \iota_{\gamma,\gamma'}(y) \sgn(\rho_2-\rho_1)\,dy\,ds.
\end{multline}
\normalsize
For the first integral, we define the unique function $\Phi_{M,\beta}:[0,\infty)\to[0,\infty)$ by 
\begin{equation}\label{eq: definition of Phi m beta}
    \Phi_{M,\beta}(0)=0,\quad \Phi_{M,\beta}'(\xi)= \Phi'(\xi)\zeta_M(\xi)\phi_\beta(\xi).
\end{equation}
By the fact that $\Phi'_{M,\beta}\geq0$, it follows that $\Phi_{M,\beta}$ is a non-decreasing function, and so we have for the first integral on the right hand side of \eqref{eq: epsilon and delta limit of weight term}, that by taking the limit as $\gamma'\to0$ followed by $\gamma\to0$ and integrating by parts,
\begin{align}\label{eq: computation for fx-fy times sgn x-y}
    \lim_{\gamma\to0}\lim_{\gamma'\to0}&\int_0^t\int_U \left(\nabla\Phi_{M,\beta}(\rho_1) -\nabla\Phi_{M,\beta}(\rho_2)\right)\cdot\nabla w(y) \iota_{\gamma,\gamma'}(y) \sgn(\rho_2-\rho_1)\,dy\,ds\nonumber\\
    =&\lim_{\delta\to0}\int_0^t\int_U \left(\nabla\Phi_{M,\beta}(\rho_1) -\nabla\Phi_{M,\beta}(\rho_2)\right)\cdot\nabla w(y) \sgn_{\delta}(\rho_2-\rho_1)\,dy\,ds\nonumber\\
    =&-\lim_{\delta\to0}\int_0^t\int_U \nabla a_{\delta}\left(\Phi_{M,\beta}(\rho_1) -\Phi_{M,\beta}(\rho_2)\right)\cdot\nabla w(y) \,dy\,ds\nonumber\\
    =&\lim_{\delta\to0}\int_0^t\int_U  a_{\delta}\left(\Phi_{M,\beta}(\rho_1) -\Phi_{M,\beta}(\rho_2)\right)\Delta w(y) \,dy\,ds\nonumber\\
    =&\int_0^t\int_U  \left|\Phi_{M,\beta}(\rho_1) -\Phi_{M,\beta}(\rho_2)\right|\Delta w(y) \,dy\,ds.
\end{align}
Above, $\sgn_\delta$ denotes the smoothed sign function $\sgn_\delta:=\sgn\ast\kappa^\delta_1$ with $\kappa^\delta_1$ defined in Definition \ref{def: convolution kernels and spatial cutoff} and $a_\delta$ is a smooth approximation of the absolute value satisfying $a_\delta(0)=0$. 
In the penultimate line we integrate by parts and do not pick up additional boundary terms due to the fact that the solutions $\rho_1,\rho_2$ coincide on the boundary.

We finally take the limit as $M\to\infty$ and $\beta\to0$ and use the pointwise convergence 
    $\Phi_{M,\beta}\to\Phi$
to deduce that this term can be written as
\begin{equation*}
    \lim_{M\to\infty,\beta\to0}\int_0^t\int_U  \left|\Phi_{M,\beta}(\rho_1) -\Phi_{M,\beta}(\rho_2)\right|\Delta w(y) \,dy\,ds=\int_0^t\int_U  \left|\Phi(\rho_1) -\Phi(\rho_2)\right|\Delta w(y) \,dy\,ds.
\end{equation*}
We apply an analogous reasoning to the second weight term. 
For $M\in\mathbb{N}, \beta\in(0,1)$ we define the unique function $\Psi_{\sigma,M,\beta}:\mathbb{R}\to\mathbb{R}$ by 
\begin{equation}
    \Psi_{\sigma,M,\beta}(0)=0,\quad \Psi_{\sigma,M,\beta}'(\xi)=(\sigma'(\xi))^2\phi_\beta(\xi)\zeta_M(\xi),
\end{equation}
which is non-negative and well defined due to the cutoff at zero.
This gives that the second term on the right hand side of \eqref{eq: epsilon and delta limit of weight term} can be written analogously to \eqref{eq: computation for fx-fy times sgn x-y} in the $\gamma'\to0$ followed by $\gamma\to0$ limit as

\begin{multline}\label{Psisigma}
    \lim_{\delta\rightarrow0}\frac{1}{2}\int_0^t\int_U F_1\left(\nabla \Psi_{\sigma,M,\beta}(\rho_1)-\nabla \Psi_{\sigma,M,\beta}(\rho_2)\right)\cdot\nabla w(y)
    \sgn_{\delta}(\rho_2-\rho_1)\,dy\,ds\\
    =\lim_{\delta\rightarrow0}\frac{1}{2}\int_0^t\int_U F_1\left(\nabla a_\delta(\Psi_{\sigma,M,\beta}(\rho_1)- \Psi_{\sigma,M,\beta}(\rho_2))\right)\cdot\nabla w(y)
    \,dy\,ds\\
    =-\frac{1}{2}\int_0^t\int_U |\Psi_{\sigma,M,\beta}(\rho_1)- \Psi_{\sigma,M,\beta}(\rho_2)|\nabla\cdot(F_1\nabla w(y))
    \,dy\,ds.
\end{multline}

Regarding the final term in \eqref{eq: epsilon and delta limit of weight term},
it follows from the seventh point of Assumption \ref{asm: assumptions on non-linear coeffs for contraction estime} that $\sigma\sigma'\zeta_M\phi_{\beta}$ is non-decreasing, and so
\begin{multline}\label{sigmasigmaprime}
     \frac{1}{2}\lim_{\gamma\to0}\lim_{\gamma'\to0} \int_0^t\int_U \left(\sigma'(\rho_1)\sigma(\rho_1)\zeta_M(\rho_1)\phi_\beta(\rho_1) -\sigma'(\rho_2)\sigma(\rho_2)\zeta_M(\rho_2)\phi_\beta(\rho_2)\right)\\
     \times F_2\cdot\nabla w(y) \iota_{\gamma,\gamma'}(y) \sgn(\rho_2-\rho_1)dy \,ds\\
      =-\frac{1}{2} \int_0^t\int_U \left|\sigma'(\rho_1)\sigma(\rho_1)\zeta_M(\rho_1)\phi_\beta(\rho_1) -\sigma'(\rho_2)\sigma(\rho_2)\zeta_M(\rho_2)\phi_\beta(\rho_2)\right|F_2\cdot\nabla w(y) \, dy\,ds. 
\end{multline}

Next we analyse \eqref{sigmasigmaprime} within different assumption classes. Under Assumption \ref{asm: classic-DK}, $\sigma\sigma'(\xi)=\frac{1}{2}$, thus by the dominated convergence theorem, we observe that 
\begin{align*}
	-\frac{1}{2}\lim_{M\rightarrow\infty,\beta\rightarrow0} \int_0^t\int_U \left|\sigma'(\rho_1)\sigma(\rho_1)\zeta_M(\rho_1)\phi_\beta(\rho_1) -\sigma'(\rho_2)\sigma(\rho_2)\zeta_M(\rho_2)\phi_\beta(\rho_2)\right|F_2\cdot\nabla w(y)  \,dy\,ds=0. 
\end{align*}
This was the case in the informal computation.
On the other hand, under Assumption \ref{asm: assumptions on non-linear coeffs for contraction estime}, \eqref{sigmasigmaprime} will not vanish.
We control \eqref{sigmasigmaprime} by using \eqref{Psisigma}. Precisely, for every $M\in\mathbb{N}$ and $\beta\in(0,1)$, let 
\begin{equation}\label{eq: def of Sigma sigma m beta}
    \Sigma_{\sigma,M,\beta}(\xi)=\sigma(\xi)\sigma'(\xi)\zeta_M(\xi)\phi_{\beta}(\xi),\quad  \xi\geq0.
\end{equation}
We find that the contribution from \eqref{Psisigma} and \eqref{sigmasigmaprime} can be combined and controlled after taking the regularization limits using the bound \eqref{eq: local bound for sigma sigma'} from the final point of Assumption \ref{asm: assumptions on non-linear coeffs for contraction estime},
\begin{multline*}
-\lim_{M\to\infty,\beta\to0}\frac{1}{2} \int_0^t\int_U \left|\Sigma_{\sigma,M,\beta}(\rho_1) -\Sigma_{\sigma,M,\beta}(\rho_2)\right|F_2\cdot\nabla w(y)  \,dy\,ds\\
+\lim_{M\to\infty,\beta\to0}\frac{1}{2}\int_0^t\int_U \left| \Psi_{\sigma,M,\beta}(\rho_1)- \Psi_{\sigma,M,\beta}(\rho_2)\right|\nabla \cdot\left(F_1\nabla w(y)\right)
    \,dy\,ds\\
    =\frac{1}{2} \int_0^t\int_U \left(\left| \Psi_{\sigma}(\rho_1)- \Psi_{\sigma}(\rho_2)\right|-\left|\Sigma_{\sigma}(\rho_1) -\Sigma_{\sigma}(\rho_2)\right|\right)F_2\cdot\nabla w(y)  \,dy\,ds\\
    +\frac{1}{2}\int_0^t\int_U \left| \Psi_{\sigma}(\rho_1)- \Psi_{\sigma}(\rho_2)\right|F_1\Delta w(y)
    \,dy\,ds\\
    \leq \frac{1}{2}(c+1)\int_0^t\int_U \left| \Psi_{\sigma}(\rho_1)- \Psi_{\sigma}(\rho_2)\right||F_2\cdot\nabla w(y)|
    \,dy\,ds+\frac{1}{2}\int_0^t\int_U \left| \Psi_{\sigma}(\rho_1)- \Psi_{\sigma}(\rho_2)\right|F_1\Delta w(y)
    \,dy\,ds.
\end{multline*}
We used the fact that $\Psi_\sigma$ is well defined without the cutoff functions in the porous medium case.
With the help of the construction of the weight function, we have that for every $M>0$ and $\beta\in(0,1)$, 
\begin{align}\label{weight}
F_1\Delta w+\frac{1}{2}(c+1)|F_2\cdot\nabla w|\leq0. 
\end{align}
Therefore we have that
\begin{multline}\label{correction-contribution}
-\lim_{M\to\infty,\beta\to0}\frac{1}{2} \int_0^t\int_U \left|\Sigma_{\sigma,M,\beta}(\rho_1) -\Sigma_{\sigma,M,\beta}(\rho_2)\right|F_2\cdot\nabla w(y)  \,dy\,ds\\
+\lim_{M\to\infty,\beta\to0}\frac{1}{2}\int_0^t\int_U \left| \Psi_{\sigma,M,\beta}(\rho_1)- \Psi_{\sigma,M,\beta}(\rho_2)\right|\nabla \cdot\left(F_1\nabla w(y)\right)
    \,dy\,ds\leq0.
\end{multline}
That is, we showed that the contribution coming from the second and third weight terms is non-negative.
Since we want to prove the theorem for both the porous medium and classical cases, we drop the contribution of this term from the estimate.

Putting everything together, we showed that 
\begin{equation*}
    \lim_{M\to\infty,\beta\to0}\lim_{\gamma\to0}\lim_{\gamma'\to0}\lim_{\epsilon,\delta\to0}I_t^{wgt}\leq\int_0^t\int_U  \left|\Phi(\rho_1) -\Phi(\rho_2)\right|\Delta w(y) \,dy\,ds.
\end{equation*}

\textbf{Flux term.}

The flux term can be handled a similar way to the weight term.
After again taking the $\epsilon,\delta\to 0$ limit followed by taking the $\gamma'\to0$ and $\gamma\to0$ limits, we get

\begin{multline}
   \lim_{\gamma\to0} \lim_{\gamma'\to0}\lim_{\epsilon,\delta\to0}I_t^{flux}
    \\ =\int_0^t\int_U     \left(\nabla\cdot\nu(\rho_1)\zeta_M(\rho_1)\phi_\beta(\rho_1)-\nabla\cdot\nu(\rho_2)\zeta_M(\rho_2)\phi_\beta(\rho_2)\right)w(x) \sgn(\rho_2-\rho_1)\,dy\,ds.
\end{multline}
Again repeating the computation of the weight term, but first taking the limits of the cutoff functions,
and consequently using the first point of Assumption \ref{asm: Assumptions on nu} that $\nu=(\nu_1,\hdots,\nu_d)$ is a non-decreasing function in each component, and integration by parts after smoothing the sign function analogous to the computation \eqref{Psisigma} gives
\begin{multline*}
\lim_{M\to\infty,\beta\to0}\int_0^t\int_U     \left(\nabla\cdot\nu(\rho_1)\zeta_M(\rho_1)\phi_\beta(\rho_1)-\nabla\cdot\nu(\rho_2)\zeta_M(\rho_2)\phi_\beta(\rho_2)\right)w(x) \sgn(\rho_2-\rho_1)\,dy\,ds\\
= \lim_{\delta\to0}\int_0^t \int_{U}\left(\nabla\cdot\nu(\rho_1)-\nabla\cdot\nu(\rho_2)\right)w(y) \sgn_\delta(\rho_2-\rho_1)\,dy\,ds\\
=-\sum_{i=1}^d\int_0^t \int_{U}\partial_{x_i}|\nu_i(\rho_1)-\nu_i(\rho_2)|w(y)\,dy\,ds  =\sum_{i=1}^d\int_0^t \int_{U}\partial_{x_i}w(y)|\nu_i(\rho_1)-\nu_i(\rho_2)|\,dy\,ds.
\end{multline*}
We showed that
\begin{equation*}
    \lim_{M\to\infty,\beta\to0}\lim_{\gamma\to0}\lim_{\gamma'\to0}\lim_{\epsilon,\delta\to0}I_t^{flux}=\sum_{i=1}^d\int_0^t \int_{U}\partial_{x_i}w(y)|\nu_i(\rho_1)-\nu_i(\rho_2)|\,dy\,ds.
\end{equation*}

\textbf{Cutoff term.}

The cutoff term consists of the terms involving derivatives of the cutoff functions.
It is
\begin{align}\label{eq: cutoff term in super contraction proof}
    &I_t^{cut}= \int_{\mathbb{R}^2}\int_0^t\int_{U^2}\kappa^{\epsilon,\delta}(y,x,\eta,\xi)\partial_\eta(\zeta_M(\eta)\phi_\beta(\eta)) \iota_{\gamma,\gamma'}(y)w(y) (-1+2\chi^{\epsilon,\delta}_{s,2})\,dq_1(x, \xi, s)\,dy\,d\eta\nonumber\\
    &+\frac{1}{2}\int_{\mathbb{R}}\int_0^t \int_{U^2}\left(
    \sigma'(\rho_1)\sigma(\rho_1)\nabla\rho_1\cdot F_2 + \sigma(\rho_1)^2F_3 \right)
    \bar{k}^{\epsilon,\delta}_{s,1}(1-2\chi^{\epsilon,\delta}_{s,2})\nonumber\\
    &\hspace{150pt}\times\partial_\eta(\zeta_M(\eta)\phi_\beta(\eta)) \iota_{\gamma,\gamma'}(y)w(y)\,dy\,dx\,ds\,d\eta\nonumber\\
    &+\int_{\mathbb{R}^2}\int_0^t\int_{U^2}\kappa^{\epsilon,\delta}(y,x',\eta,\xi)\partial_\eta(\zeta_M(\eta)\phi_\beta(\eta)) \iota_{\gamma,\gamma'}(y)w(y)(-1+2\chi^{\epsilon,\delta}_{s,1})\,dq_2(x', \xi, s)\,dy\,d\eta\nonumber\\
    &+\frac{1}{2}\int_{\mathbb{R}}\int_0^t \int_{U^2}\left( \sigma'(\rho_2)\sigma(\rho_2)\nabla\rho_2\cdot F_2 + \sigma(\rho_2)^2F_3 \right)\bar{k}^{\epsilon,\delta}_{s,2}(1-2\chi^{\epsilon,\delta}_{s,1})\nonumber\\
    &\hspace{150pt}\times\partial_\eta(\zeta_M(\eta)\phi_\beta(\eta)) \iota_{\gamma,\gamma'}(y)w(y)\,dy\,dx\,ds\,d\eta\nonumber\\
    &+\int_{\mathbb{R}}\int_0^t \int_{U^2}\left(\Phi'(\rho_1)\nabla\rho_1+\frac{1}{2}F_1[\sigma'(\rho_1)]^2\nabla\rho_1 +\frac{1}{2} \sigma'(\rho_1)\sigma(\rho_1)F_2\right)\cdot\nabla\iota_{\gamma,\gamma'}(y)\nonumber\\
    &\hspace{150pt}\times \bar{k}^{\epsilon,\delta}_{s,1}\zeta_M(\eta)\phi_\beta(\eta)w(y) (-1+2\chi^{\epsilon,\delta}_{s,2})\,dy\,dx\,ds\,d\eta\nonumber\\
    &+\int_{\mathbb{R}}\int_0^t \int_{U^2}\left(\Phi'(\rho_2)\nabla\rho_2 +\frac{1}{2}F_1[\sigma'(\rho_2)]^2\nabla\rho_2+\frac{1}{2} \sigma'(\rho_2)\sigma(\rho_2)F_2\right)\cdot\nabla\iota_{\gamma,\gamma'}(y)\nonumber\\
    & \hspace{150pt}\times\bar{k}^{\epsilon,\delta}_{s,2}\zeta_M(\eta)\phi_\beta(\eta)  w(y)(-1+2\chi^{\epsilon,\delta}_{s,1})\,dy\,dx'\,ds\,d\eta.    
\end{align}
We emphasise that this term is handled in a similar way to the corresponding cutoff term in the uniqueness proof stated in Theorem \ref{thm: standard well posedness thm from Popat25}, where we just use that the additional weight function that is present in the integrands is bounded. 
Here we will make explicit the dependence of the estimate on the compact support parameter $\gamma'$.
We provide brief intuition illustrating why each of the terms is bounded, but refer to \cite{Shyam25} for the full details.

For the integrals in the first and third lines of \eqref{eq: cutoff term in super contraction proof} involving the kinetic measures, taking the derivative of the cutoff functions and then taking the limits in $\epsilon,\delta$, we have that there exists a constant $c\in(0,\infty)$ such that
\begin{multline*}
   \limsup_{\epsilon,\delta\to0}\left|\int_{\mathbb{R}^2}\int_0^t\int_{U^2}\kappa^{\epsilon,\delta}(y,x,\eta,\xi)\partial_\eta(\zeta_M(\eta)\phi_\beta(\eta)) \iota_{\gamma,\gamma'}(y)w(y) (-1+2\chi^{\epsilon,\delta}_{s,2})\,dq_1(x, \xi, s)\,dy\,d\eta\right|\\
   \leq c\left(\beta^{-1}q_1(U\times[\beta/2,\beta]\times[0,T])+q_1(U\times[M,M+1]\times[0,T])\right).
\end{multline*}
In the above we used the pointwise boundedness of the weight and cutoff functions.
The bound is then clearly independent of $\gamma'$ and $\gamma$ so that the $\gamma,\gamma'\to0$ limits can be taken.
An analogous inequality holds for the term involving the kinetic measure $q_2$.
The decay of the kinetic measures at infinity is a consequence of \eqref{eq: decay of kinetic measure at infinity} in the definition of kinetic solutions, and the decay at zero is precisely proved in Proposition 4.29 of the work of the first author \cite{Shyam25}.
By Fatou's lemma, it then follows that there almost surely exists subsequences $\beta\to 0$ and $M\to\infty$ such that
\begin{equation*}
    \lim_{M\to\infty}\lim_{\beta\to0}\left(\beta^{-1}q_1(U\times[\beta/2,\beta]\times[0,T])+q_1(U\times[M,M+1]\times[0,T])\right)=0.
\end{equation*}

We next deal with the terms in the second and fourth lines of \eqref{eq: cutoff term in super contraction proof}.
Firstly, to bound the terms involving $F_2$, analogously to weight and flux terms above, in the $\epsilon,\delta\to0$ limit we use the fact that $\sgn$ is an odd function and the  product rule to evaluate the derivative of the cutoffs to get

\begin{multline}\label{eq: long expression for old cutoff terms}
      \lim_{\epsilon,\delta\to0}\left(\frac{1}{2}\int_{\mathbb{R}}\int_0^t \int_{U^2}
    \sigma'(\rho_1)\sigma(\rho_1)\nabla\rho_1\cdot F_2
    \bar{k}^{\epsilon,\delta}_{s,1}(1-2\chi^{\epsilon,\delta}_{s,2})\partial_\eta(\zeta_M(\eta)\phi_\beta(\eta)) \,\iota_{\gamma,\gamma'}(y)w(y)\,dy\,dx\,ds\,d\eta\right.\\
    \left.+\frac{1}{2}\int_{\mathbb{R}}\int_0^t \int_{U^2} \sigma'(\rho_2)\sigma(\rho_2)\nabla\rho_2\cdot F_2 \bar{k}^{\epsilon,\delta}_{s,2}(1-2\chi^{\epsilon,\delta}_{s,1})\partial_\eta(\zeta_M(\eta)\phi_\beta(\eta)) \,\iota_{\gamma,\gamma'}(y)w(y)\,dy\,dx\,ds\,d\eta\right)\\
    =\frac{1}{4}\int_0^t\int_U\left(\nabla\sigma^2(\rho_2)-\nabla\sigma^2(\rho_1)\right)\cdot F_2 \left(\mathbbm{1}_{M<\rho<M+1}+\beta^{-1}\mathbbm{1}_{\beta/2<\rho<\beta}\right) \sgn(\rho_2-\rho_1)\,\iota_{\gamma,\gamma'}(y)w(y)\,dy\,ds.
    \end{multline}
The term involving the indicator in $M$ in the final integral of \eqref{eq: long expression for old cutoff terms} is handled in a similar way to the term involving $\beta$, so we just illustrate the computation for the $\beta$ term.
For convenience introduce the shorthand notation for $i=1,2$, $(x,t)\in U\times[0,T]$,
\begin{equation*}
    \rho_{i,\beta}(x,t):=\left(\rho_i(x,t)\vee\beta\right)\wedge\beta/2.
\end{equation*} 
The indicator functions allows us to re-write the $\beta$-term in the final line as
    \begin{align}\label{eq: explicit terms expanding the indicator functions}
    \frac{1}{4\beta}\int_0^t\int_U\nabla\left(\left(\sigma^2(\rho_{2,\beta})-\sigma^2(\beta/2)\right)-\left(\sigma^2(\rho_{1,\beta})-\sigma^2(\beta/2)\right)\right)
   \cdot F_2  \sgn(\rho_2-\rho_1)\iota_{\gamma,\gamma'}(y)w(y)\,dy\,ds.
\end{align}
Note that $\sigma^2$ is not necessarily increasing, so we cannot use that $\sgn(\rho_2-\rho_1)=\sgn(\sigma^2(\rho_2)-\sigma^2(\rho_1))$ as in \eqref{eq: computation for fx-fy times sgn x-y}.
Instead we smooth out the sign function as we did above and integrate by parts, which gives for the term involving $\rho_2$ (the term involving $\rho_1$ is handled analogously) in the final line of \eqref{eq: explicit terms expanding the indicator functions} that
\begin{align}\label{eq: decomposition of sigma 2 when integrating by parts}
    &\frac{1}{4\beta}\int_0^t\int_U\nabla\left(\sigma^2(\rho_{2,\beta})-\sigma^2(\beta/2)\right)\cdot F_2  \sgn(\rho_2-\rho_1)\iota_{\gamma,\gamma'}(y) w(y)\,dy\,ds\nonumber\\
    &=-\lim_{\delta\to0}\frac{1}{4\beta}\int_0^t\int_U\left(\sigma^2(\rho_{2,\beta})-\sigma^2(\beta/2)\right)\nabla\cdot F_2  \sgn_\delta(\rho_2-\rho_1)\iota_{\gamma,\gamma'}(y) w(y)\,dy\,ds\nonumber\\
    &\hspace{20pt}-\lim_{\delta\to0}\frac{1}{4\beta}\int_0^t\int_U\left(\sigma^2(\rho_{2,\beta})-\sigma^2(\beta/2)\right) \nabla(\rho_2-\rho_1)\cdot F_2  (\sgn_\delta)'(\rho_2-\rho_1)\iota_{\gamma,\gamma'}(y) w(y)\,dy\,ds\nonumber\\
    &\hspace{20pt}-\lim_{\delta\to0}\frac{1}{4\beta}\int_0^t\int_U\left(\sigma^2(\rho_{2,\beta})-\sigma^2(\beta/2)\right)\nabla(\iota_{\gamma,\gamma'}(y)w(y))\cdot F_2\, \sgn_\delta(\rho_2-\rho_1)\,dy\,ds.
\end{align}
For the first two terms we can directly take the $\gamma'\to0$ followed by  $\gamma\to0$ limits.
The first term is shown to converge to zero using seventh point in Assumption \ref{asm: assumptions on non-linear coeffs for contraction estime}, which tells us that either $\nabla\cdot F_2=0$ and so the integral vanishes immediately, or we can use the fundamental theorem of calculus and the assumption that $\sigma\sigma'(0)=0$ to get
\begin{multline}\label{eq: how to handle sigma2 term}
     -\lim_{\beta\to0}\lim_{\gamma\to0}\lim_{\gamma'\to0}\lim_{\delta\to0}\frac{1}{4\beta}\int_0^t\int_U\left(\sigma^2(\rho_{2,\beta})-\sigma^2(\beta/2)\right)\nabla\cdot F_2  \sgn_\delta(\rho_2-\rho_1)\iota_{\gamma,\gamma'}(y) w(y)\,dy\,ds\\
     =-\frac{1}{2}\int_0^t\int_U(\sigma\sigma')(0)\mathbbm{1}_{\rho>0}\nabla\cdot F_2  \sgn(\rho_2-\rho_1)=0.
\end{multline}
The corresponding terms involving $M$ in the final line of \eqref{eq: long expression for old cutoff terms} are treated similarly, but we use the assumption on the oscillations of $\sigma^2$ at infinity \eqref{eq: oscillations of sigma2 at infinity}.\\
The term in the middle line also vanishes by using the fact that
\begin{equation*}
    \left(\sigma^2(\rho_{2,\beta})-\sigma^2(\beta/2)\right) (\sgn_\delta)'(\rho_2-\rho_1)\leq c\mathbbm{1}_{\{0\leq |\rho_1-\rho_2|\leq c\delta \,\,\text{and}\,\, \beta/2\leq\rho_2\leq\beta\}}.
\end{equation*}
The only fundamentally new term to handle is the final term of equation \eqref{eq: decomposition of sigma 2 when integrating by parts} involving the gradient of spatial cutoff and weight function.
When the gradient hits the spatial cutoff, this term is treated by grouping the $\rho_2$ and $\rho_1$ terms and following the same computation from equation \eqref{eq: first equation for bounding cutoff term that does not have a gradient} to \eqref{eq: final equation for bounding cutoff term that does not have a gradient} below.
We therefore reserve the analysis for then and will not repeat it here.
The new term arising when the gradient hits the weight function is
\begin{equation*}
    -\frac{1}{4\beta}\int_0^t\int_U\left(\sigma^2(\rho_{2,\beta})-\sigma^2(\beta/2)\right)\iota_{\gamma,\gamma'}(y)\nabla w(y)\cdot F_2\, \sgn(\rho_2-\rho_1)\,dy\,ds.
\end{equation*}
This term is treated in the same way as equation as \eqref{eq: how to handle sigma2 term} above, and vanishes in the $\beta\to0$ limit.
We further mention that the terms involving $F_3$ in the second and fourth lines can be bounded in a similar way as described above, using the $L^2_x$-integrability of $\sigma(\rho_i)$ and the boundedness of $F_3$.

Let us conclude by bounding the final two integrals of the cutoff term \eqref{eq: cutoff term in super contraction proof} above, comprising of the terms involving gradients of the spatial cutoff. 
After taking $\epsilon,\delta\to 0$ limits, we are left to handle
\begin{align}\label{new terms in uniqueness}
    &\int_0^t \int_{U}\left(\Phi'(\rho_1)\nabla\rho_1+\frac{1}{2}F_1[\sigma'(\rho_1)]^2\nabla\rho_1 +\frac{1}{2} \sigma'(\rho_1)\sigma(\rho_1)F_2\right)\zeta_M(\rho_1)\phi_\beta(\rho_1)w \cdot\nabla\iota_{\gamma,\gamma'}\sgn(\rho_2-\rho_1)dyds\nonumber\\
    &+\int_0^t \int_{U}\left(\Phi'(\rho_2)\nabla\rho_2 +\frac{1}{2}F_1[\sigma'(\rho_2)]^2\nabla\rho_2+\frac{1}{2} \sigma'(\rho_2)\sigma(\rho_2)F_2\right)\zeta_M(\rho_2)\phi_\beta(\rho_2)w \cdot\nabla\iota_{\gamma,\gamma'}\sgn(\rho_1-\rho_2)dyds.
\end{align}
We combine the terms in the two lines using the fact that $\sgn$ is an odd function.

We will deal with the terms that have a factor of $\nabla\rho_i$ and the final term which does not separately.
For the first terms of \eqref{new terms in uniqueness}, using the definition $\Phi_{M,\beta}$ from \eqref{eq: definition of Phi m beta} with Lemma \ref{remark derivative of spatial cutoff} to define the spatial derivative of the cutoff, the notation $v_{y,\gamma'}:=\frac{y-y^*_{\gamma'}}{|y-y^*_{\gamma'}|}$ for the inward pointing unit normal at the boundary $\partial U_{\gamma'}$ and the fundamental theorem of calculus, we show that the contribution coming from the difference of the first terms of \eqref{new terms in uniqueness} is non-negative

\begin{align}\label{equation bringing sgn into derivative for increasing function}
     &-\int_0^t \int_{U}\left(\nabla\Phi_{M,\beta}(\rho_2)-\nabla\Phi_{M,\beta}(\rho_1)\right) \cdot\nabla\iota_{\gamma,\gamma'}(y)\,w(y) \sgn(\rho_2-\rho_1)\,dy\,ds\nonumber\\
     &=-(\gamma-\gamma')^{-1}\int_0^t \int_{U_{\gamma'}\setminus U_\gamma}\nabla|\Phi_{M,\beta}(\rho_2)-\Phi_{M,\beta}(\rho_1)| \cdot v_{y,\gamma'} w(y)\,dy\,ds\nonumber\\
     &=-(\gamma-\gamma')^{-1}\int_0^t \int_{\gamma'}^\gamma \int_{\partial U_{z}}\nabla|\Phi_{M,\beta}(\rho_2(y^*+zv_{y,\gamma'},s))-\Phi_{M,\beta}(\rho_1(y^*+zv_{y,\gamma'},s))| \cdot v_{y,\gamma'} w(y)\,dS(y)\,dz\,ds\nonumber\\
     &=-(\gamma-\gamma')^{-1}\int_0^t \int_{\gamma'}^\gamma \int_{\partial U_{z}}\frac{\partial}{\partial z}|\Phi_{M,\beta}(\rho_2(y^*+zv_{y,\gamma'},s))-\Phi_{M,\beta}(\rho_1(y^*+zv_{y,\gamma'},s))|w(y)\,dS(y)\,dz\,ds\nonumber\\
     &=(\gamma-\gamma')^{-1}\int_0^t \int_{\partial U_{\gamma'}}|\Phi_{M,\beta}(\rho_2)-\Phi_{M,\beta}(\rho_1)|w(y)\,dS(y)\,ds\nonumber\\
     &\hspace{100pt}- (\gamma-\gamma')^{-1}\int_0^t \int_{\partial U_\gamma}|\Phi_{M,\beta}(\rho_2)-\Phi_{M,\beta}(\rho_1)|w(y)\,dS(y)\,ds.
     \end{align}
Sending $\gamma'\to0$, it follows from the local regularity of solutions \eqref{eq: local regularity property of stochastic kinetic solution} and the trace theorem, see Chapter 5.5 of Evans \cite{evans2022partial}, the fact that trace zero $W^{1,1}(U)$-functions can be approximated by smooth compactly supported functions and the fact that solutions coincide on the boundary, that 
$$
\lim_{\gamma'\to0}(\gamma-\gamma')^{-1}\int_0^t \int_{\partial U_{\gamma'}}|\Phi_{M,\beta}(\rho_2)-\Phi_{M,\beta}(\rho_1)|w(y)\,dS(y)\,ds=0. 
$$
The final term in \eqref{equation bringing sgn into derivative for increasing function} is signed for every $\gamma\in(0,\gamma')$ so can be removed from the estimate.

By repeating the same arguments, noting that $\frac{1}{2}F_1(\sigma'(\rho_2))^2\geq 0$, one can conclude that the combination of second terms of \eqref{new terms in uniqueness} are non-positive for every $\gamma>0$. 
Hence the contribution of these terms can be neglected from the estimate.

For the final term of \eqref{new terms in uniqueness}, we have by Lemma \ref{remark derivative of spatial cutoff} as well as by the boundedness of $F_2$, $\sgn$ and the weight function $w$,
\begin{multline}\label{eq: first equation for bounding cutoff term that does not have a gradient}
    \frac{1}{2} \int_0^t \int_{U} \left(\sigma'(\rho_2)\sigma(\rho_2)\zeta_M(\rho_2)\phi_\beta(\rho_2) - \sigma'(\rho_1)\sigma(\rho_1)\zeta_M(\rho_1)\phi_\beta(\rho_1) \right) \sgn(\rho_2-\rho_1)w(y)F_2\cdot\nabla\iota_{\gamma,\gamma'}(y)\,dy\,ds\\
    \leq c (\gamma-\gamma')^{-1} \int_0^t \int_{U} \left|\sigma'(\rho_2)\sigma(\rho_2)\zeta_M(\rho_2)\phi_\beta(\rho_2) - \sigma'(\rho_1)\sigma(\rho_1)\zeta_M(\rho_1)\phi_\beta(\rho_1) \right|\mathbbm{1}_{U_{\gamma'} \setminus U_{\gamma}}(y)\,dy\,ds.
\end{multline}

Here we can directly take the limit $\gamma'\to0$ to obtain
\begin{equation}\label{eq: taking gamma' limit in final cutoff term}
    c \gamma^{-1} \int_0^t \int_{U} \left|\sigma'(\rho_2)\sigma(\rho_2)\zeta_M(\rho_2)\phi_\beta(\rho_2) - \sigma'(\rho_1)\sigma(\rho_1)\zeta_M(\rho_1)\phi_\beta(\rho_1) \right|\mathbbm{1}_{U\setminus U_{\gamma}}(y)\,dy\,ds
\end{equation}

For every $y\in U \setminus U_\gamma$, let $y^*=y^*(y)$ denote the unique closest point on the boundary $\partial U$ to $y$. 
For $i=1,2$ and $y^*\in \partial U$, introduce the notation 
\begin{equation*}
    \rho_{i}(y^*,s):=(\Phi^{-1}(\rho_b(y^*))
\end{equation*}
and
\begin{equation*}
    \rho_{i,M,\beta}(y^*,s):=(\rho_i(y^*,s)\vee\beta/2)\wedge (M+1)=(\Phi^{-1}(\rho_b(y^*))\vee\beta/2)\wedge (M+1).
\end{equation*}

Using the fact that solutions coincide on the boundary, subtracting the term\\ $\sigma'(\rho_2(y^*,s))\sigma(\rho_2(y^*,s))\zeta_M(\rho_2(y^*,s))\phi_\beta(\rho_2(y^*,s))$ and adding the same function but evaluated at $\rho_1$, using the triangle inequality and Lipschitz property of $\sigma'\sigma\zeta_M\phi_\beta$ which holds due to the cutoff functions $\zeta_M\phi_\beta$, we have that \eqref{eq: taking gamma' limit in final cutoff term} is bounded by
\begin{align}\label{final cutoff term in uniqueness}
    c \gamma^{-1}
    \int_0^t \int_{U\setminus U_{\gamma}} (|\rho_{2,M,\beta}(y,s) - \rho_{2,M,\beta}(y^*,s)|+|\rho_{1,M,\beta}(y^*,s)-\rho_{1,M,\beta}(y,s)|)\,dy\,ds.
\end{align}

Both of the above terms are controlled in the same way.
For $i=1,2$, fixed distance from the boundary $\tilde{\gamma}\in(0,\gamma)$ and fixed time $s\in[0,t]$, it follows by the fundamental theorem of calculus and the Cauchy--Schwartz inequality that
\begin{multline}
    \int_{\partial U_{\tilde\gamma}}|\rho_{i,M,\beta}(y,s)-\rho_{i,M,\beta}(y^*,s)|\,dS(y)=\int_{\partial U_{\tilde\gamma}}\left|\int_0^{1}\nabla\rho_{i,M,\beta}\left(y^*+zv_y,s\right)\cdot v_y\,dz \right| \,dS(y)\\
    \leq\int_{{U}\setminus U_{\tilde\gamma}}|\nabla\rho_{i,M,\beta}(y,s)|\,dy
    \leq |U\setminus U_{\tilde\gamma}|^{1/2}\,\|\nabla\rho_{i,M,\beta}\|_{L^2(U\setminus U_{\tilde\gamma})}\leq \tilde\gamma^{1/2} \,\|\nabla\rho_{i,M,\beta}\|_{L^2_x},
\end{multline}
where in the final inequality we made the norm of $\nabla\rho_{i,M,\beta}$ independent of $\tilde\gamma$.
We therefore bound the terms of \eqref{final cutoff term in uniqueness} by
\begin{multline}\label{eq: final equation for bounding cutoff term that does not have a gradient}
         \gamma^{-1}  \int_0^t \int_{U\setminus U_{\gamma}} |\rho_{i,M,\beta}(y,s) - \rho_{i,M,\beta}(y^*)|\,dy\,ds\\
        = \gamma^{-1}  \int_0^t \int_{0}^\gamma \int_{\partial U_{\tilde\gamma}} |\rho_{i,M,\beta}(y,s) - \rho_{i,M,\beta}(y^*)|\,dS(y)\,d\tilde{\gamma}\,ds\\
        \leq \gamma^{-1}  \int_0^t \|\nabla\rho_{i,M,\beta}\|_{L^2_x}\left(\int_0^\gamma  \tilde{\gamma}^{1/2}\,d\tilde\gamma\right)\,ds\\
        \leq  \gamma^{1/2}\|\nabla\rho_{i,M,\beta}\|_{L^1([0,t];L^2_x)},
\end{multline}
which converges to $0$ in the $\gamma\to0$ limit for fixed $M,\beta$.
Therefore conclude that final two lines of the cutoff \eqref{eq: cutoff term in super contraction proof} consisting of the new terms are non-positive in the $\gamma\to0$ limit.

Putting \eqref{eq: cutoff term in super contraction proof} and subsequent computations together, we conclude
\begin{equation*}
    \lim_{M\to\infty,\beta\to0}\lim_{\gamma\to0}\lim_{\gamma'\to0}\lim_{\epsilon,\delta\to0}I_t^{cut}\leq0.
\end{equation*}

\textbf{Martingale term.}

By analogous computations to the weight and flux terms, we have as $\epsilon,\delta\to0$ that
\begin{multline*}
    \lim_{\epsilon,\delta\to0}I_t^{mart}\\
    =\int_0^t\int_U \sgn(\rho_2-\rho_1) \iota_{\gamma,\gamma'}(y)w(y)\left(\zeta_M(\rho_1)\phi_\beta(\rho_1)\nabla\cdot\left(\sigma(\rho_1)\,d\xi^F\right)-\zeta_M(\rho_2)\phi_\beta(\rho_2)\nabla\cdot\left(\sigma(\rho_2)\,d\xi^F\right)\right)\,dy.
\end{multline*}
Just as in the formal proof, we realise that we are only aiming to prove the contraction in expectation.
Since the martingale term is a true martingale in the sense that the integrand at time $s$ is $\mathcal{F}_s$-measurable, it vanishes in expectation
\begin{equation*}
    \mathbb{E}I_t^{mart}=0,
\end{equation*}
and no further analysis is necessary for this term.

\textbf{Conclusion.}

Putting everything together, from equation \eqref{equation for difference of two solutions in uniqueness proof} we illustrated that the expected difference of two stochastic kinetic solutions $\rho_1$ and $\rho_2$ in the weighted $L^1_{w;x}$-norm can be upper bounded
\begin{multline*}
    \mathbb{E}\left.\int_\mathbb{R}\int_{U}|\chi^1_s-\chi^2_s|^2w\right|_{s=0}^t=\lim_{\beta\to0,M\to\infty}\lim_{\gamma\to0}\lim_{\gamma'\to0}\lim_{\epsilon,\delta\to0}\mathbb{E}\left.\int_\mathbb{R}\int_{U}|\chi^{\epsilon,\delta}_{s,1}-\chi^{\epsilon,\delta}_{s,2}|^2w\phi_\beta\zeta_M\iota_{\gamma,\gamma'}\right|_{s=0}^t\\
    =\lim_{\beta\to0,M\to\infty}\lim_{\gamma\to0}\lim_{\gamma'\to0}\lim_{\epsilon,\delta\to0}\mathbb{E}\left( I_t^{err} + I_t^{meas} + I_t^{mart} + I_t^{cut}+ I_t^{flux}+I_t^{bound}\right)\\
    \leq \mathbb{E}\int_0^t\int_U  \left|\Phi(\rho_1) -\Phi(\rho_2)\right|\Delta w(y) \,dy\,ds +\mathbb{E}\sum_{i=1}^d\int_0^t \int_{U}\partial_{x_i}w(y)|\nu_i(\rho_1)-\nu_i(\rho_2)|\,dy\,ds.
\end{multline*}
Compared to \eqref{eq: requirements of weight function in the formal proof}, here the weight function instead needs to satisfy
\begin{equation*}
    \begin{cases}
        \Delta w <0 \\
        
        F_1\Delta w+\frac{1}{2}(c+1)|F_2\cdot\nabla w|<0,\\
        \partial_{x_i}w(x)<0\quad \text{for every } i=1,\dots,d.
    \end{cases}
\end{equation*}
We realise that similarly to Proposition \ref{prop: choice of weight function in informal proof}, we can choose the weight function satisfying
\begin{align*}
w(x)=-\exp(\alpha x\cdot e)+C
\end{align*}
where the constant $C$ is again chosen large enough so that the weight is non-negative, and $\alpha$ is modified to be sufficiently large, depending on the constant $c$.

The result then follows by repeating the computation in the formal proof, from \eqref{eq: simplifying the super contraction estimate by using def of weight} to \eqref{eq: choice of constant}.
\end{proof}

The above theorem suggests that the super contraction stems from the contribution of the non-linear functions $\Phi$ and $\nu$.
In the case that $\sigma$ is sufficiently regular, for example the porous medium case $\sigma(\xi)=\xi^m, m>1$, we show that we get a contribution from the Stratonovich-to-It\^o terms too.
To see this, we recall that the limiting function $\Psi_{\sigma}:\mathbb{R}\to\mathbb{R}$ is defined in \eqref{eq: def of Psi sigma} by
\begin{equation}
    \Psi_{\sigma}(\xi)=\int^{\xi}_0[\sigma'(\xi')]^2d\xi',\quad \xi\geq 0.
\end{equation}
\begin{corollary}\label{corrolary: improved super contraction in porous medium case}
    Suppose that the assumptions from Theorem \ref{thm: L1 omega super contraction} are satisfied.
    Suppose additionally that $\Psi_{\sigma}$  as in \eqref{eq: def of Psi sigma} is well-defined and satisfies \begin{equation}\label{eq: Psi sigma in L1 condition}
        \Psi_\sigma(\rho_i)\in L^1(U\times[0,T])\quad \text{for i=1,2}.
    \end{equation}
Then the super contraction from Theorem \ref{thm: L1 omega super contraction} can be strengthened to 
\begin{equation}\label{enhanced-super-contraction}
    \mathbb{E}\|\rho_1(\cdot,t)-\rho_2(\cdot,t)\|_{L^1_{w;x}}\leq \mathbb{E}\|\rho_{1,0}-\rho_{2,0}\|_{L^1_{w;x}}-c\mathbb{E}\int_0^t\int_U\tilde{\mathcal{A}}(\rho_1(y,s),\rho_2(y,s))\,dy\,ds
\end{equation}
with
\begin{equation}\label{eq: equation for tilde A}
    \tilde{\mathcal{A}}(\rho_1,\rho_2):=\mathcal{A}(\rho_1,\rho_2)
    +\frac{1}{2} \left| \Psi_{\sigma}(\rho_1)- \Psi_{\sigma}(\rho_2)\right|\left(F_1\Delta w+(c+1)|F_2\cdot\nabla w|\right).
\end{equation}

\end{corollary}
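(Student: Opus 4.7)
[Proof proposal of Corollary \ref{corrolary: improved super contraction in porous medium case}]
The plan is to re-run the proof of Theorem \ref{thm: L1 omega super contraction} verbatim through every step except the analysis of the weight term, where we will no longer discard the non-positive dissipative contribution coming from the Stratonovich-to-It\^o correction, but instead retain it as part of a strengthened negative bound. Specifically, after arriving at the expression
\begin{equation*}
\frac{1}{2}\!\int_0^t\!\!\int_U \left|\Psi_{\sigma}(\rho_1)-\Psi_{\sigma}(\rho_2)\right|\!\left(F_1\Delta w + (c+1)|F_2\cdot\nabla w|\right)\,dy\,ds
\end{equation*}
as an upper bound for the combination of the $\Sigma_{\sigma,M,\beta}$-term and the $\Psi_{\sigma,M,\beta}$-term in the weight contribution, the weight condition \eqref{weight} ensures that the integrand is pointwise non-positive. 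Under the additional assumption \eqref{eq: Psi sigma in L1 condition}, this quantity is finite, so it can be added explicitly to the right-hand side of the super-contraction estimate rather than discarded.

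The first key step is to justify the passage to the limit $M\to\infty$, $\beta\to 0$ in the term $\int_0^t\!\int_U |\Psi_{\sigma,M,\beta}(\rho_1)-\Psi_{\sigma,M,\beta}(\rho_2)||\nabla\cdot(F_1\nabla w)|\,dy\,ds$. By construction $\Psi_{\sigma,M,\beta}'=(\sigma')^2\phi_\beta\zeta_M$, so $\Psi_{\sigma,M,\beta}\to\Psi_\sigma$ pointwise and $|\Psi_{\sigma,M,\beta}(\xi)|\leq \Psi_\sigma(\xi)$ uniformly in $M,\beta$. Combined with the boundedness of $\nabla\cdot(F_1\nabla w)$ on $U$ and the hypothesis \eqref{eq: Psi sigma in L1 condition}, the dominated convergence theorem gives the desired limit. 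The second step is to perform the analogous passage for the $\Sigma_{\sigma,M,\beta}$-term using the Lipschitz-type bound \eqref{eq: local bound for sigma sigma'} from point (8) of Assumption \ref{asm: assumptions on non-linear coeffs for contraction estime}, which yields
\begin{equation*}
|\Sigma_{\sigma,M,\beta}(\rho_1)-\Sigma_{\sigma,M,\beta}(\rho_2)|\leq c\,|\Psi_\sigma(\rho_1)-\Psi_\sigma(\rho_2)| + R_{M,\beta}(\rho_1,\rho_2),
\end{equation*}
where $R_{M,\beta}$ is a remainder supported on $\{\rho_i\in[\beta/2,\beta]\cup[M,M+1]\}$ which vanishes in the limit by the decay of the kinetic measure at zero and infinity, together with the oscillation bound \eqref{eq: oscillations of sigma2 at infinity}. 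This furnishes the factor $(c+1)$ multiplying $|F_2\cdot\nabla w|$ in the final expression.

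The final step is to combine the retained negative contribution with the already-established bound from Theorem \ref{thm: L1 omega super contraction}. Collecting the weight, flux and newly retained Stratonovich-to-It\^o terms on the right-hand side produces
\begin{equation*}
\int_0^t\!\!\int_U\!\!\left(|\Phi(\rho_1)-\Phi(\rho_2)|\Delta w + \sum_{i=1}^d|\nu_i(\rho_1)-\nu_i(\rho_2)|\partial_{x_i}w + \tfrac{1}{2}|\Psi_\sigma(\rho_1)-\Psi_\sigma(\rho_2)|(F_1\Delta w + (c+1)|F_2\cdot\nabla w|)\right)dy\,ds,
\end{equation*}
which, by the exponential form of $w$ from Proposition \ref{prop: choice of weight function in informal proof} (with $\alpha$ taken large enough so that the bound \eqref{weight} holds with strict inequality), is bounded above by $-c\int_0^t\!\int_U \tilde{\mathcal{A}}(\rho_1,\rho_2)\,dy\,ds$ for $\tilde{\mathcal{A}}$ defined in \eqref{eq: equation for tilde A} and some constant $c\in(0,\infty)$ depending only on $d,U,\alpha$ and the noise coefficients.

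The main obstacle will be the second step: justifying uniformly in $(M,\beta)$ the pointwise bound on $|\Sigma_{\sigma,M,\beta}(\rho_1)-\Sigma_{\sigma,M,\beta}(\rho_2)|$ through the combined effect of the cutoffs, since $\Sigma_{\sigma,M,\beta}$ is not simply $(\sigma\sigma')\phi_\beta\zeta_M$ differentiated in a compatible way with $\Psi_{\sigma,M,\beta}$. This will require carefully tracking the derivatives of $\zeta_M\phi_\beta$ at the endpoints $\{\beta/2,\beta,M,M+1\}$ and using the oscillation control \eqref{eq: oscillations of sigma2 at infinity} together with the integrability of the kinetic measure $q_i$ in a neighbourhood of these levels to absorb the remainder $R_{M,\beta}$ into terms that vanish in the cutoff limits.
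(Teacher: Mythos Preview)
Your approach is correct and matches the paper's proof: re-enter the argument of Theorem \ref{thm: L1 omega super contraction}, and at the point where the combined $\Psi_{\sigma,M,\beta}$/$\Sigma_{\sigma,M,\beta}$ contribution \eqref{correction-contribution} was discarded, instead pass to the limit via dominated convergence using the hypothesis \eqref{eq: Psi sigma in L1 condition} and retain the resulting non-positive term.

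One simplification: your ``main obstacle'' in the second step is not a real obstacle, and the remainder $R_{M,\beta}$ and appeal to kinetic-measure decay are unnecessary. A crude uniform bound suffices for DCT: since $|\zeta_M\phi_\beta|\le 1$,
\[
|\Sigma_{\sigma,M,\beta}(\rho_1)-\Sigma_{\sigma,M,\beta}(\rho_2)|
\le |\sigma\sigma'(\rho_1)|+|\sigma\sigma'(\rho_2)|,
\]
and by point (8) of Assumption \ref{asm: assumptions on non-linear coeffs for contraction estime} applied with $\xi_2=0$ (together with point (7), which gives either $(\sigma\sigma')(0)=0$ or an irrelevant constant on a bounded domain), one has $|\sigma\sigma'(\rho_i)|\le c\,(1+\Psi_\sigma(\rho_i))\in L^1(U\times[0,T])$ by \eqref{eq: Psi sigma in L1 condition}. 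This dominates uniformly in $M,\beta$, so DCT applies directly to both the $\Psi_{\sigma,M,\beta}$- and $\Sigma_{\sigma,M,\beta}$-integrals, and the limiting inequality \eqref{eq: local bound for sigma sigma'} then yields the factor $(c+1)$ exactly as in the proof of Theorem \ref{thm: L1 omega super contraction}. The paper's proof is accordingly just a few lines.
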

\begin{proof}
We now return to the proof of Theorem~\ref{thm: L1 omega super contraction}. Due to a potential lack of regularity of $\Psi_\sigma$ in the classical Dean--Kawasaki case (where $\sigma(\xi)=\sqrt{\xi}$), the negative contribution of \eqref{correction-contribution} must be discarded before the truncations are removed. However, when the diffusion coefficient $\sigma$ is sufficiently regular so that condition \eqref{eq: Psi sigma in L1 condition} holds, we may apply the dominated convergence theorem to \eqref{correction-contribution} to obtain
\begin{align*}
	\limsup_{M\to\infty,\ \beta\to0}&\Bigg[-\frac{1}{2} \int_0^t\!\!\int_U 
	\left|\Sigma_{\sigma,M,\beta}(\rho_1) - \Sigma_{\sigma,M,\beta}(\rho_2)\right|
	F_2\cdot\nabla w(y)\,dy\,ds \notag\\
	&\quad + \frac{1}{2}\int_0^t\!\!\int_U 
	\left| \Psi_{\sigma,M,\beta}(\rho_1) - \Psi_{\sigma,M,\beta}(\rho_2)\right|
	\nabla\cdot\!\left(F_1\nabla w(y)\right)\,dy\,ds \Bigg] \\
	&\leq c\int_0^t\!\!\int_U 
	\frac{1}{2}\left| \Psi_{\sigma}(\rho_1) - \Psi_{\sigma}(\rho_2)\right|
	\left(F_1\Delta w + (c+1)\lvert F_2\cdot\nabla w\rvert\right)\,dy\,ds.
\end{align*}
Therefore, the non-negative contribution in \eqref{correction-contribution} can be retained in the estimate. 
Following the remaining steps in the proof of Theorem~\ref{thm: L1 omega super contraction}, we complete the argument.
\end{proof}

To conclude the section on super contraction estimates, we prove one final estimate. We show that by imposing an analogue of linear structure for $\Psi_\sigma$, we can obtain an improved rate of convergence.
The result follows directly from Corollary \ref{corrolary: improved super contraction in porous medium case}.

\begin{corollary}\label{coro: L1 omega super contraction}
Under the same assumptions as Theorem \ref{thm: L1 omega super contraction}, suppose additionally that $\Psi_{\sigma}$ defined in \eqref{eq: def of Psi sigma} is well-defined and satisfies the lower Lipschitz bound
\begin{align}\label{Lip-Psisigma}
c|\xi_1-\xi_2|\leq|\Psi_{\sigma}(\xi_1)-\Psi_{\sigma}(\xi_2)|, 
\end{align}
for some $c>0$, uniformly for every $\xi_1,\xi_2\geq0$.
There exists a constant $c\in(0,\infty)$ such that we have for every $t\geq0$
\begin{multline*}
    \mathbb{E}\|\rho_1(\cdot,t)-\rho_2(\cdot,t)\|_{L^1_{w;x}}\leq \mathbb{E}\|\rho_{1,0}-\rho_{2,0}\|_{L^1_{w;x}}\\
    -c\mathbb{E}\int_0^t\int_U\mathcal{A}(\rho_1(y,s),\rho_2(y,s))\,dy\,ds-c\mathbb{E}\int^t_0\int_U|\rho_1(y,s)-\rho_2(y,s)|\,dy\,ds.
\end{multline*}
\end{corollary}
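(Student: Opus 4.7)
The plan is to deduce the claim directly from Corollary~\ref{corrolary: improved super contraction in porous medium case} once the additional integrability hypothesis~\eqref{eq: Psi sigma in L1 condition} has been verified and the lower Lipschitz bound~\eqref{Lip-Psisigma} has been inserted into the resulting estimate.

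First I would check the integrability $\Psi_{\sigma}(\rho_i)\in L^1(U\times[0,T])$ for $i=1,2$. The identity $\Psi_\sigma(\xi)=\int_0^\xi(\sigma'(\xi'))^2\,d\xi'$ combined with the bound~\eqref{eq: bound on nu squared and sigma sigma' squared} in Assumption~\ref{asm: assumptions on non-linear coeffs for contraction estime} yields a polynomial growth estimate of the form $\Psi_{\sigma}(\xi)\leq c(1+\xi+\Phi(\xi))$ on $[0,\infty)$. Paired with the a priori $L^1$-estimates on kinetic solutions from Theorem~\ref{thm: standard well posedness thm from Popat25}, this delivers the required integrability. Corollary~\ref{corrolary: improved super contraction in porous medium case} then applies and gives the enhanced super-contraction~\eqref{enhanced-super-contraction} with $\tilde{\mathcal{A}}$ as in~\eqref{eq: equation for tilde A}.

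Next I would exploit the structural properties of the weight function $w$ of Proposition~\ref{prop: choice of weight function in informal proof}. By taking the parameter $\alpha$ sufficiently large, and using that $\bar U$ is compact and the relevant inequality for $w$ is strict, one obtains a uniform lower bound
\begin{equation*}
\inf_{y\in U}\bigl(-F_1(y)\Delta w(y)-(c+1)\lvert F_2(y)\cdot\nabla w(y)\rvert\bigr)\geq c_0>0.
\end{equation*}
The noise-correction piece of $\tilde{\mathcal{A}}$ is therefore pointwise bounded (in absolute value) below by $\tfrac{c_0}{2}\lvert\Psi_{\sigma}(\rho_1)-\Psi_{\sigma}(\rho_2)\rvert$, and inserting the lower Lipschitz bound~\eqref{Lip-Psisigma} produces an additional strictly negative contribution proportional to $\lvert\rho_1-\rho_2\rvert$. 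Substituting into~\eqref{enhanced-super-contraction} and collecting constants yields the claimed inequality.

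The main obstacle is the verification of the integrability condition~\eqref{eq: Psi sigma in L1 condition}: the hypothesis~\eqref{Lip-Psisigma} is only a one-sided bound on $\Psi_{\sigma}$ and so does not by itself control its size from above. This control must be extracted from the polynomial growth conditions in Assumption~\ref{asm: assumptions on non-linear coeffs for contraction estime} together with the well-posedness estimates of Theorem~\ref{thm: standard well posedness thm from Popat25}. Once this point is settled, the remainder of the proof amounts to inserting the Lipschitz lower bound into an inequality that has already been established in Corollary~\ref{corrolary: improved super contraction in porous medium case}.
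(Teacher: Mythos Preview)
Your approach is essentially the same as the paper's: verify the integrability hypothesis~\eqref{eq: Psi sigma in L1 condition} so that Corollary~\ref{corrolary: improved super contraction in porous medium case} applies, and then combine the lower Lipschitz bound~\eqref{Lip-Psisigma} with the uniform strict negativity of $F_1\Delta w+(c+1)|F_2\cdot\nabla w|$ (cf.~\eqref{weight}) to extract the extra $-c\int|\rho_1-\rho_2|$ contribution. One small correction on the integrability step: the bound~\eqref{eq: bound on nu squared and sigma sigma' squared} controls $(\sigma\sigma')^2$, not $(\sigma')^2=\Psi_\sigma'$, so the growth estimate you claim does not follow from it alone; the second display in Assumption~\ref{asm: assumptions on non-linear coeffs for contraction estime}(4), which bounds $(\sigma')^4/\Phi'$, together with $\Phi'(\xi)\leq c(1+\Phi(\xi))$, is what actually yields the polynomial growth of $\Psi_\sigma$ away from zero.
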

\begin{proof}
	Under the additional upper bound assumption in \eqref{Lip-Psisigma}, and using the $L^1_x$-boundedness of both solutions $\rho_1$ and $\rho_2$, we obtain
$$
\Psi_\sigma(\rho_i) \in L^1(U \times [0,T]) \quad \text{for } i=1,2.
$$
Consequently, the estimate \eqref{enhanced-super-contraction} in Corollary~\ref{corrolary: improved super contraction in porous medium case} applies. Moreover, by invoking the lower bound in \eqref{Lip-Psisigma} together with the fact that $F_1 \Delta w + (c+1)\lvert F_2 \cdot \nabla w \rvert$ admits a positive lower bound, we conclude the proof.

\end{proof}

\section{Ergodicity}\label{sec: ergodicity}
For the ergodicity results we need the below technical result, Lemma B.2 from Dareiotis, Gess and Tsatsoulis \cite{DGT20}. \begin{lemma}\label{lemma: lemma B.2 from DGT}
    Let $f,h:\mathbb{R}\to\mathbb{R}$ be continuous functions.
    Suppose that there exists constants $c\in(0,\infty)$ such that for every $0<s\leq t$ and $m\geq1$,
    \begin{equation*}
        \begin{cases}
            f(t)-f(s)\leq -c\int_s^t|f(r)|^{m-1}f(r)\,dr\\
            h(t)-h(s)=-c\int_s^t|h(r)|^{m-1}h(r)\,dr
        \end{cases}
    \end{equation*}
    If $f$ is upper bounded by $h$ at the initial time $f(0)\leq h(0)$, it follows $f$ remains upper bounded by $h$, $f(t)\leq h(t)$, for every $t\geq 0$.
\end{lemma}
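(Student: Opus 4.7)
The strategy is the standard comparison-principle approach: define the difference $g(t) := f(t) - h(t)$ and show by contradiction that the set $\{t \geq 0 : g(t) > 0\}$ is empty, using that the nonlinearity $\psi(\xi) := |\xi|^{m-1}\xi$ is monotone nondecreasing on $\mathbb{R}$ (strictly increasing for $m>1$, the identity for $m=1$).

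Concretely, I would proceed as follows. Suppose for contradiction that there exists $t_0 > 0$ with $f(t_0) > h(t_0)$. Since $f$ and $h$ are continuous and $f(0) \leq h(0)$, the closed set
\[
A := \{ s \in [0,t_0] : f(s) \leq h(s) \}
\]
contains $0$ and is therefore nonempty; moreover it does not contain $t_0$. Let $s_0 := \sup A \in [0, t_0)$. By continuity, $f(s_0) = h(s_0)$, and by definition of $s_0$ together with continuity, $f(r) > h(r)$ for every $r \in (s_0, t_0]$. Next, I would subtract the two hypotheses of the lemma applied on the interval $[s_0, t_0]$ (using $s_0 > 0$ directly, or passing to the limit $s \downarrow 0$ via continuity if $s_0 = 0$) to obtain
\[
f(t_0) - h(t_0) = \bigl(f(t_0) - f(s_0)\bigr) - \bigl(h(t_0) - h(s_0)\bigr) \leq -c \int_{s_0}^{t_0} \bigl( \psi(f(r)) - \psi(h(r)) \bigr) \, dr.
\]
On $(s_0, t_0]$ we have $f(r) > h(r)$, and since $\psi$ is nondecreasing, $\psi(f(r)) - \psi(h(r)) \geq 0$; by continuity this integrand is strictly positive on a set of positive measure near $t_0$. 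Hence the right-hand side is $\leq 0$, yielding $f(t_0) \leq h(t_0)$, a contradiction. Therefore $f(t) \leq h(t)$ for all $t \geq 0$.

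\textbf{Main obstacle.} There is essentially no deep obstruction here; the argument is a classical one-sided Gronwall/comparison argument. The only mildly delicate point is ensuring that $s_0$ is well defined and that one may take $s = s_0$ in the integral inequality: this is handled by continuity of $f, h$ and a limiting argument when $s_0 = 0$. One should also note that in the case $m=1$ the monotonicity of $\psi$ is trivial, and for $m>1$ the strict monotonicity is what actually forces the strict inequality $\psi(f(r)) > \psi(h(r))$ on $(s_0, t_0]$; in either case the sign of the integrand is what matters, and the proof goes through uniformly in $m \geq 1$.
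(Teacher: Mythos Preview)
Your argument is correct and is the standard comparison-principle proof one would expect for such a statement. Note, however, that the paper does not actually prove this lemma: it is quoted verbatim as Lemma~B.2 from Dareiotis, Gess and Tsatsoulis \cite{DGT20} and invoked as a black box. So there is no ``paper's own proof'' to compare against; your argument simply supplies the missing details, and it does so cleanly. The aside about strict positivity of the integrand near $t_0$ is unnecessary (nonnegativity already yields $f(t_0)-h(t_0)\le 0$, which is the contradiction), but it does no harm.
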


We can now prove a polynomial in time decay of the difference of two kinetic solutions of \eqref{SPDE-0} with different initial conditions.
The result follows the same ideas as Proposition 6.1 of Dong, Zhang and Zhang \cite{DZZ23}, see also Theorem 3.8 of Dareiotis, Gess, and Tsatsoulis \cite{DGT20}.

\begin{proposition}[Convergence rates for the difference of two solutions of \eqref{SPDE-0}]\label{prop: polynomial decay of solutions}
   Suppose that $(\Phi,\sigma)$ satisfy either Assumption \ref{asm: classic-DK} or \ref{asm: assumptions on non-linear coeffs for contraction estime}, and that $(\Phi,\nu)$ further satisfy Assumptions \ref{asm: Assumptions on nu} and \ref{asm: assumption on curly A}.

    Let $\rho(t;\rho_{1,0})$ and $\rho(t;\rho_{2,0})$ denote stochastic kinetic solutions of equation \eqref{SPDE-0} with initial data $\rho_{1,0}, \rho_{2,0}\in L^2_\omega L^2_{x}$ respectively and the same boundary data.
    There exists a constant $c\in(0,\infty)$ depending only on $\alpha,d,U$ and $q_0$ (but notably independent of $t$) such that for every $t\geq0$,
    \begin{equation}\label{eq: polynomial rate of decay for two solutions of DK}
        \sup_{\rho_{1,0},\rho_{2,0}\in L^2_\omega L^2_{x}}\mathbb{E}\|\rho(t;\rho_{1,0})-\rho(t;\rho_{2,0})\|_{L^1_{w;x}}\leq c\|w\|_{L^{q^*}_x}^{q^*}t^{-1/q_0},
    \end{equation}
    where $q^*:=\frac{q_0+1}{q_0}$.
    
     In the case that either $\Phi$ or $\nu$ additionally satisfy a lower Lipschitz bound\footnote{A lower Lipschitz bound for function $f$ is a bound of the form $c|\xi_1-\xi_2|\leq |f(\xi_1)-f(\xi_2)|$ for some constant $c\in(0,\infty)$ uniformly for every $\xi_1,\xi_2$ in the domain of $f$.}, which in particular captures the classical Dean--Kawasaki case \eqref{SPDE-1}, we can upgrade the polynomial rate of decay \eqref{eq: polynomial rate of decay for two solutions of DK} to the exponential rate
    \begin{align}\label{eq: exponential contraction for classical case of lipschitz nu}
\mathbb{E}\|\rho(t;\rho_{1,0})-\rho(t,\rho_{2,0})\|_{L^1_{w;x}}\leq&\mathbb{E}\|\rho_{1,0}-\rho_{2,0}\|_{L^1_{w;x}}\exp(-c\|w\|_{L^{\infty}_x}^{-1}t). 
\end{align}
In particular this applies to the classical Dean--Kawasaki equation, equation \eqref{SPDE-1}.
\end{proposition}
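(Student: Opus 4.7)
The plan is to combine the super-contraction estimate of Theorem \ref{thm: L1 omega super contraction} with the growth bound of Assumption \ref{asm: assumption on curly A} and the ODE comparison principle of Lemma \ref{lemma: lemma B.2 from DGT}. Writing $f(t):=\mathbb{E}\|\rho(t;\rho_{1,0})-\rho(t;\rho_{2,0})\|_{L^1_{w;x}}$, I first apply Theorem \ref{thm: L1 omega super contraction} on an arbitrary subinterval $[s,t]$ (via the semigroup property, which is licit thanks to the $C([0,T];L^1_x)$-continuity of Theorem \ref{thm: standard well posedness thm from Popat25}), and insert the lower bound $\mathcal{A}(\xi_1,\xi_2)\ge c_{q_0}|\xi_1-\xi_2|^{q_0+1}$ to obtain
\begin{equation*}
f(t)-f(s)\le -c\,\mathbb{E}\int_s^t\!\!\int_U|\rho(r;\rho_{1,0})-\rho(r;\rho_{2,0})|^{q_0+1}\,dy\,dr.
\end{equation*}

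Next I convert the $L^{q_0+1}_x$-integrand into $\|\cdot\|_{L^1_{w;x}}^{q_0+1}$. Hölder's inequality with conjugate exponents $q_0+1$ and $q^*=(q_0+1)/q_0$ gives $\|\rho_1-\rho_2\|_{L^1_{w;x}}\le\|\rho_1-\rho_2\|_{L^{q_0+1}_x}\|w\|_{L^{q^*}_x}$, hence
\begin{equation*}
\int_U|\rho_1-\rho_2|^{q_0+1}\,dy\ge \|w\|_{L^{q^*}_x}^{-(q_0+1)}\|\rho_1-\rho_2\|_{L^1_{w;x}}^{q_0+1}.
\end{equation*}
Combined with Jensen's inequality $\mathbb{E}[X^{q_0+1}]\ge(\mathbb{E}X)^{q_0+1}$ applied to the non-negative random variable $X=\|\rho_1-\rho_2\|_{L^1_{w;x}}$, this yields the closed integral inequality
\begin{equation*}
f(t)-f(s)\le -c'\int_s^t f(r)^{q_0+1}\,dr,\qquad c':=c\,\|w\|_{L^{q^*}_x}^{-(q_0+1)}.
\end{equation*}
Lemma \ref{lemma: lemma B.2 from DGT} with $m=q_0+1$ then compares $f$ with the family of ODE solutions $h_{f_0}(t)=(f_0^{-q_0}+c'q_0 t)^{-1/q_0}$; letting $f_0\to\infty$ yields the limiting dominator $h_\infty(t)=(c'q_0 t)^{-1/q_0}$, which is independent of initial data and, since $(q_0+1)/q_0=q^*$, produces the polynomial bound $f(t)\le c\,\|w\|_{L^{q^*}_x}^{q^*}\,t^{-1/q_0}$.

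For the exponential refinement, a lower Lipschitz bound on $\Phi$ or on some $\nu_i$ strengthens Assumption \ref{asm: assumption on curly A} to $\mathcal{A}(\xi_1,\xi_2)\ge c|\xi_1-\xi_2|$ (effectively the $q_0=0$ case). Since $w\le\|w\|_{L^\infty_x}$ implies $\int_U|\rho_1-\rho_2|\,dy\ge\|w\|_{L^\infty_x}^{-1}\|\rho_1-\rho_2\|_{L^1_{w;x}}$, the linear integral inequality $f(t)-f(s)\le -c\|w\|_{L^\infty_x}^{-1}\int_s^t f(r)\,dr$ follows directly (no Jensen step needed in the linear case), and Grönwall's lemma delivers $f(t)\le f(0)\exp(-c\|w\|_{L^\infty_x}^{-1}t)$. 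The main technical point I anticipate is justifying the passage $f_0\to\infty$ in the comparison step; this requires the continuity of $f$ in $t$, which is inherited from the almost-sure path-continuity of kinetic solutions combined with the standard $L^1_x$-contraction \eqref{eq: standard l1 contraction} and dominated convergence. Everything else is bookkeeping with inequalities already at hand.
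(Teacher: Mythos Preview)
Your proposal is correct and follows essentially the same route as the paper: super-contraction (Theorem \ref{thm: L1 omega super contraction}) combined with Assumption \ref{asm: assumption on curly A}, then H\"older plus Jensen to close the integral inequality in $f(t)=\mathbb{E}\|\rho(t;\rho_{1,0})-\rho(t;\rho_{2,0})\|_{L^1_{w;x}}$, followed by the ODE comparison of Lemma \ref{lemma: lemma B.2 from DGT}, with Gr\"onwall replacing the comparison in the lower-Lipschitz case. The only superfluous wrinkle is your ``$f_0\to\infty$'' step: since $h(0)^{-q_0}\ge 0$, the explicit solution $h(t)=(h(0)^{-q_0}+c'q_0 t)^{-1/q_0}$ already satisfies $h(t)\le (c'q_0 t)^{-1/q_0}$ for every $h(0)=f(0)\ge 0$, so no limiting argument (and hence no continuity justification for that purpose) is needed.
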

\begin{proof}
    Theorem \ref{thm: L1 omega super contraction} alongside Assumption \ref{asm: assumption on curly A} and the time continuity of solutions (Theorem \ref{thm: standard well posedness thm from Popat25} above) give for every $0\leq s\leq t$ and a running constant $c\in(0,\infty)$
    \begin{multline}\label{eq: combining the super contraction and assumption on mathcal A}
         \mathbb{E}\|\rho(t;\rho_{1,0})-\rho(t;\rho_{2,0})\|_{L^1_{w;x}}\\
         \leq \mathbb{E}\|\rho(s;\rho_{1,0})-\rho(s;\rho_{2,0})\|_{L^1_{w;x}}-c \mathbb{E}\int_s^t\int_U \mathcal{A}\left(\rho(r;\rho_{1,0}),\rho(r;\rho_{2,0})\right)\,dy\,dr\\
         \leq \mathbb{E}\|\rho(s;\rho_{1,0})-\rho(s;\rho_{2,0})\|_{L^1_{w;x}}-c \mathbb{E}\int_s^t\int_U |\rho(r;\rho_{1,0})-\rho(r;\rho_{2,0})|^{q_0+1}\,dy\,dr\\
         =\mathbb{E}\|\rho(s;\rho_{1,0})-\rho(s;\rho_{2,0})\|_{L^1_{w;x}}-c \mathbb{E}\int_s^t\|\rho(r;\rho_{1,0})-\rho(r;\rho_{2,0})\|^{q_0+1}_{L^{q_0+1}_x}\,dr.
    \end{multline}
    Furthermore, by using H\"older's inequality, we have that
    \begin{equation*}
        \|\rho(r;\rho_{1,0})-\rho(r;\rho_{2,0})\|_{L^1_{w;x}}\leq \|\rho(r;\rho_{1,0})-\rho(r;\rho_{2,0})\|_{L^{q_0+1}_x}\|w\|_{L^{q^*}_x},
    \end{equation*}
    with $q^*:=\frac{q_0+1}{q_0}$ the H\"older conjugate of $q_0+1$.
    Taking expectation and applying Jensen's inequality $\mathbb{E}[X]\leq \mathbb{E}[X^{q_0+1}]^{\frac{1}{q_0+1}}$, for any suitable random variables $X$, gives after re-arranging
    \begin{align*}
         \|w\|_{L^{q^*}_x}^{-(q_0+1)}\left(\mathbb{E}\|\rho(r;\rho_{1,0})-\rho(r;\rho_{2,0})\|_{L^1_{w;x}}\right)^{q_0+1}\leq \mathbb{E}\|\rho(r;\rho_{1,0})-\rho(r;\rho_{2,0})\|^{q_0+1}_{L^{q_0+1}_x}.
    \end{align*}
Substituting this into the final term of \eqref{eq: combining the super contraction and assumption on mathcal A} gives
\begin{multline}\label{eq: equation to set up use of technical lemma}
         \mathbb{E}\|\rho(t;\rho_{1,0})-\rho(t;\rho_{2,0})\|_{L^1_{w;x}}\leq\mathbb{E}\|\rho(s;\rho_{1,0})-\rho(s;\rho_{2,0})\|_{L^1_{w;x}}\\
         -c \|w\|_{L^{q^*}_x}^{-(q_0+1)}\int_s^t\left(\mathbb{E}\|\rho(r;\rho_{1,0})-\rho(r;\rho_{2,0})\|_{L^1_{w;x}}\right)^{q_0+1}\,dr.
\end{multline}
We now wish to apply Lemma \ref{lemma: lemma B.2 from DGT}.
To this end, define $f(t):=\mathbb{E}\|\rho(t;\rho_{1,0})-\rho(t;\rho_{2,0})\|_{L^1_{w;x}}$.
Equation \eqref{eq: equation to set up use of technical lemma} can be written for $s\leq t$ as the integral inequality
\begin{equation*}
    f(t)-f(s)\leq -c \|w\|_{L^{q^*}_x}^{-(q_0+1)}\int_s^t f(r)^{q_0+1}\,dr,
\end{equation*}
with initial condition $f(0)=\mathbb{E}\|\rho_{1,0}-\rho_{2,0}\|_{L^1_{w;x}}$.
By Lemma \ref{lemma: lemma B.2 from DGT}, $f(t)\leq h(t)$ for every $t>0$, where $h$ satisfies
\begin{equation}
    \begin{cases}\label{eq: PDE h}
        h'(t)=-c \|w\|_{L^{q^*}_x}^{-(q_0+1)}h(t)^{q_0+1},\\
        h(0)=f(0).
    \end{cases}
\end{equation}
By applying separation of variables, the solution of \eqref{eq: PDE h} is given by 
\begin{equation*}
    h(t)=\left(\frac{1}{h(0)^{-q_0}+c \|w\|_{L^{q^*}_x}^{-(q_0+1)}q_0t}\right)^{1/q_0}.
\end{equation*}
Consequently for every $t>0$ we have
\begin{multline*}
    \mathbb{E}\|\rho(t;\rho_{1,0})-\rho(t;\rho_{2,0})\|_{L^1_{w;x}}\leq \left(\frac{1}{h(0)^{-q_0}+c \|w\|_{L^{q^*}_x}^{-(q_0+1)}q_0t}\right)^{1/q_0}\\
    \leq\left(\frac{1}{c \|w\|_{L^{q^*}_x}^{-(q_0+1)}q_0t}\right)^{1/q_0}=c\|w\|_{L^{q^*}_x}^{q^*}t^{-1/q_0}.
\end{multline*}

In the following, suppose that $(\Phi,\sigma)$ satisfy Assumptions~\ref{asm: classic-DK} or that $\nu$ is Lipschitz. In either of these cases, following the computation of Theorem \ref{thm: L1 omega super contraction} gives for every $0\leq s\leq t$,
    \begin{multline*}
\mathbb{E}\|\rho(t;\rho_{1,0})-\rho(t,\rho_{2,0})\|_{L^1_{w;x}}\\
\leq\mathbb{E}\|\rho(s;\rho_{1,0})-\rho(s,\rho_{2,0})\|_{L^1_{w;x}}-c\mathbb{E}\int_s^t\int_U|\rho(r;\rho_{1,0})-\rho(r,\rho_{2,0})|\,dx\,dr .
    \end{multline*}
We removed the negative contribution of the higher-order power term on the right hand side.
Here we directly use the boundedness of $w$ to derive
\begin{align*}
\|w\|_{L^{\infty}_x}^{-1}\|\rho(r;\rho_{1,0})-\rho(r,\rho_{2,0})\|_{L^1_{w;x}}\leq \|\rho(r;\rho_{1,0})-\rho(r,\rho_{2,0})\|_{L^1_x},
\end{align*}
This gives
 \begin{align*}
\mathbb{E}\|\rho(t;\rho_{1,0})-\rho(t,\rho_{2,0})\|_{L^1_{w;x}}\leq&\mathbb{E}\|\rho(s;\rho_{1,0})-\rho(s,\rho_{2,0})\|_{L^1_{w;x}}\\
&-c\|w\|_{L^{\infty}_x}^{-1}\int^t_s\mathbb{E}\|\rho(r;\rho_{1,0})-\rho(r,\rho_{2,0})\|_{L^1_{w;x}}\,dr.
\end{align*}
Applying Gronwall's inequality gives the result of the exponential decay rate.

\end{proof}

Using an essentially identical argument, together with Corollary~\ref{coro: L1 omega super contraction}, if the contribution coming from the Stratonovich-to-It\^o terms satisfies the lower Lipschitz property, we can upgrade the polynomial convergence rate to an exponential one.
This in particular applies to the porous medium case for $m=2$.
\begin{corollary}[regularization by noise]\label{prp-improvedrate}
Suppose that $(\Phi,\sigma,\nu)$ satisfy Assumptions~\ref{asm: assumptions on non-linear coeffs for contraction estime}, \ref{asm: Assumptions on nu}, and~\ref{asm: assumption on curly A}.
Suppose further that $\Psi_\sigma$ as defined in \eqref{eq: def of Psi sigma} is well-defined and satisfies the lower Lipschitz property \eqref{Lip-Psisigma}.

With the same notation as Proposition \ref{prop: polynomial decay of solutions}, we have
\begin{align*}
\mathbb{E}\|\rho(t;\rho_{1,0})-\rho(t,\rho_{2,0})\|_{L^1_{w;x}}\leq&\mathbb{E}\|\rho_{1,0}-\rho_{2,0}\|_{L^1_{w;x}}\exp(-C\|w\|_{L^{\infty}_x}^{-1}t). 
\end{align*}
\end{corollary}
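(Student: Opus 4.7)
The plan is to mimic the exponential-decay half of Proposition~\ref{prop: polynomial decay of solutions}, but to use the strengthened super-contraction of Corollary~\ref{coro: L1 omega super contraction} instead of Theorem~\ref{thm: L1 omega super contraction}. The key observation is that the lower Lipschitz bound \eqref{Lip-Psisigma} on $\Psi_\sigma$ already did the hard work at the level of the super-contraction estimate: it produced an additional dissipative term of the form
\begin{equation*}
    -c\,\mathbb{E}\int_0^t\!\int_U |\rho(r;\rho_{1,0})-\rho(r;\rho_{2,0})|\,dy\,dr,
\end{equation*}
which is linear in the $L^1_x$-distance between the two solutions. So the only remaining step is a Gronwall-type argument.

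Concretely, I would first apply Corollary~\ref{coro: L1 omega super contraction} to the pair of solutions with initial data $\rho_{1,0},\rho_{2,0}$ to obtain, for every $0\le s\le t$,
\begin{align*}
\mathbb{E}\|\rho(t;\rho_{1,0})-\rho(t;\rho_{2,0})\|_{L^1_{w;x}}
\le\;& \mathbb{E}\|\rho(s;\rho_{1,0})-\rho(s;\rho_{2,0})\|_{L^1_{w;x}}\\
&-c\,\mathbb{E}\int_s^t\!\int_U |\rho(r;\rho_{1,0})-\rho(r;\rho_{2,0})|\,dy\,dr,
\end{align*}
where I discard the non-positive contribution from $\mathcal{A}$ (which is not needed here). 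Next, since $w$ is bounded from above, a pointwise estimate gives
\begin{equation*}
    \|w\|_{L^\infty_x}^{-1}\,\|f\|_{L^1_{w;x}} \le \|f\|_{L^1_x}\quad\text{for every }f\in L^1_x,
\end{equation*}
which converts the additional $L^1_x$-dissipation into a lower bound proportional to the weighted norm. Substituting this into the integral inequality above gives a Gronwall-type inequality of the form
\begin{equation*}
    f(t) \le f(s) - c\,\|w\|_{L^\infty_x}^{-1}\int_s^t f(r)\,dr,
\end{equation*}
with $f(t):=\mathbb{E}\|\rho(t;\rho_{1,0})-\rho(t;\rho_{2,0})\|_{L^1_{w;x}}$. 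A standard application of Gronwall's lemma then yields the exponential decay
\begin{equation*}
    f(t) \le f(0)\,\exp\bigl(-c\,\|w\|_{L^\infty_x}^{-1}\,t\bigr),
\end{equation*}
which is exactly the asserted bound.

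There is no real obstacle; the entire content of the corollary is already encoded in the enhanced super-contraction of Corollary~\ref{coro: L1 omega super contraction}, and the remaining argument is essentially identical to the Lipschitz-$\nu$ case treated at the end of the proof of Proposition~\ref{prop: polynomial decay of solutions}. The only subtlety worth flagging is that one must verify the boundedness $\|w\|_{L^\infty_x}<\infty$, which is immediate from the explicit form $w(x)=-\exp(\alpha x\cdot e)+C$ on the bounded domain $U$ constructed in Proposition~\ref{prop: choice of weight function in informal proof}.
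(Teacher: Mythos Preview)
Your proposal is correct and follows essentially the same approach as the paper, which explicitly states that the corollary follows by ``an essentially identical argument'' to the exponential case of Proposition~\ref{prop: polynomial decay of solutions}, replacing Theorem~\ref{thm: L1 omega super contraction} by Corollary~\ref{coro: L1 omega super contraction}. Your write-up in fact supplies more detail than the paper itself does for this step.
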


Now that we have established polynomial and exponential decays of norms of solutions of \eqref{SPDE-0} with different initial data, we would like to establish rates of convergence of the laws of our solutions towards the invariant measure.

For stochastic kinetic solutions $\rho(t;\rho_0)$ of equation \eqref{SPDE-0} we begin by defining the associated Markovian semigroup.
\begin{definition}[Semigroup]\label{def: definition of semigroup}
    For any $t\geq 0$ and $B_b(L^1_{w;x})$ defined in Section \ref{subsec: notation}, define $P_t:B_b(L^1_{w;x})\to B_b(L^1_{w;x})$ by
    \begin{equation}\label{eq: equation for semigroup}
        P_t F(\rho_0):=\mathbb{E} \left[F(\rho(t;\rho_0))\right],\quad F\in B_b(L^1_{w;x}), \,\rho_0\in L^1_{w;x}.
    \end{equation}
\end{definition}

In order to identify the invariant measure in Theorem \ref{thm: existence and uniqueness of invariant measure} we need to extend the time horizon of our SPDE \eqref{SPDE-0} to $-\infty$.
Informally, we will see that the invariant measure is the law of the process at time zero when the process is started at time $-\infty$.

For $-\infty<s\leq T$ we introduce the notation $\rho_s(t;\rho_0)$ to denote the stochastic kinetic solution of equation \eqref{SPDE-0} at time $t$ with initial condition $\rho_0$ at time $s$.
That is, the solution of
\begin{align}\label{eq: SPDE extended to negative times}
\begin{cases}
    \partial_t\rho_s(t;\rho_0)=\Delta\Phi(\rho_s(t;\rho_0))-\nabla\cdot(\sigma(\rho_s(t;\rho_0))\circ\dot{\xi}^F+\nu(\rho_s(t;\rho_0))),& (x,t)\in U\times(s,T],\\
	\rho_s(s;\rho_0)=\rho_0,& (x,t)\in U\times\{t=s\},\\
	\Phi(\rho_s(t;\rho_0))=\rho_b,& (x,t)\in\partial U\times(s,T]. 
\end{cases}
\end{align}
We extend the Brownian motions in the definition of $\xi^F$ to negative times by gluing at $t=0$ an independent Brownian motion evolving backwards in time.
For consistency we also have $\rho_0(t;\rho_0)=\rho(t;\rho_0)$.
The global well-posedness of \eqref{eq: SPDE extended to negative times} follows precisely the same arguments as the case $s=0$ from Theorem \ref{thm: standard well posedness thm from Popat25}. We state this as a corollary without proof.
\begin{corollary}[Well-posedness and contraction estimate for equation \eqref{eq: SPDE extended to negative times}]\label{corrolary: well posedness and contraction for DK on arbitrary time interval}
    For every $-\infty<s<T$ we can define stochastic kinetic solutions $\rho_s(\cdot;\rho_0)$ of the Dean--Kawasaki equation on the interval $[s,T]$.
    The kinetic measure $q_s$ can be defined on $U\times[0,\infty]\times[s,T]$, and the kinetic equation \eqref{eq: decay of kinetic measure at infinity} is amended so that the initial condition is at time $s$, and time integrals on the right hand side range from $[s,T]$ rather than $[0,T]$.

    Well-posedness of stochastic kinetic solutions of $\rho_s$ follows precisely the same arguments as Theorem \ref{thm: standard well posedness thm from Popat25}, see the well-posedness results in the work of the first author \cite{Shyam25}.
    Furthermore, for two stochastic kinetic solutions $\rho_{s,1},\rho_{s,2}$ with initial conditions $\rho_{s,1,0}, \rho_{s,2,0}$ at time $s$ respectively, we have the almost-sure contraction estimate
    \begin{equation*}
    \sup_{t\in[s,T]}\|\rho_{s,1}(t;\rho_{s,1,0})-\rho_{s,2}(t;\rho_{s,2,0})\|_{L^1_x}\leq \|\rho_{s,1,0}-\rho_{s,2,0}\|_{L^1_x}.
\end{equation*}
\end{corollary}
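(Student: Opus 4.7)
The plan is to reduce the statement to the already--proven Theorem~\ref{thm: standard well posedness thm from Popat25} by a time--translation argument, exploiting the fact that the construction of $\xi^F$ has been extended to negative times via gluing an independent backward Brownian motion at $t=0$. More precisely, for fixed $-\infty<s<T$ and an $\mathcal{F}_s$-measurable initial datum $\rho_0$, I would define the translated Brownian motions
\begin{equation*}
\tilde\beta_k(t):=\beta_k(t+s)-\beta_k(s),\qquad t\in[0,T-s],\quad k\in\mathbb{N},
\end{equation*}
together with the translated filtration $\tilde{\mathcal{F}}_t:=\mathcal{F}_{t+s}$. By the strong Markov property (and the independence of the backward extension used to define $\beta_k$ for negative times) the processes $(\tilde\beta_k)_{k\in\mathbb{N}}$ are a family of independent $\tilde{\mathcal{F}}_t$-Brownian motions, so the translated noise $\tilde\xi^F(x,t):=\sum_k f_k(x)\tilde\beta_k(t)$ satisfies Definition~\ref{def: definition of the noise} on $[0,T-s]$ with the same spatial coefficients $F$.

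Next, to any candidate solution $\rho_s$ of \eqref{eq: SPDE extended to negative times} I would associate the shifted process
\begin{equation*}
\tilde\rho(x,t):=\rho_s(x,t+s),\qquad(x,t)\in U\times[0,T-s],
\end{equation*}
and the shifted kinetic measure $\tilde q(dx,d\xi,dt):=q_s(dx,d\xi,d(t+s))$. Rewriting the kinetic identity \eqref{eq: kinetic equation} for $\rho_s$ in terms of $\tilde\rho$ and $\tilde q$ and using the identity $d\xi^F(\cdot,t+s)=d\tilde\xi^F(\cdot,t)$, I would check that $\tilde\rho$ is a stochastic kinetic solution on $[0,T-s]$ of \eqref{SPDE-0} driven by $\tilde\xi^F$, with $\tilde{\mathcal{F}}_0$-measurable initial datum $\rho_0$ and boundary datum $\rho_b$, in the sense of Definition~\ref{def: stochastic kinetic solution of DK}. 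Conversely, any kinetic solution $\tilde\rho$ on $[0,T-s]$ in that sense produces, by the inverse translation $\rho_s(x,t):=\tilde\rho(x,t-s)$ and $q_s(\cdot,\cdot,dt):=\tilde q(\cdot,\cdot,d(t-s))$, a kinetic solution of \eqref{eq: SPDE extended to negative times} on $[s,T]$. This bijection is the content of the corollary's well--posedness claim: existence, uniqueness and the continuity $\rho_s\in C([s,T];L^1_x)$ follow by applying Theorem~\ref{thm: standard well posedness thm from Popat25} to $\tilde\rho$.

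For the contraction estimate, let $\rho_{s,1},\rho_{s,2}$ be the two solutions with initial data $\rho_{s,1,0},\rho_{s,2,0}$ at time $s$, and let $\tilde\rho_1,\tilde\rho_2$ be the corresponding translated solutions on $[0,T-s]$. The almost--sure $L^1_x$-contraction from \eqref{eq: standard l1 contraction} gives
\begin{equation*}
\sup_{t\in[0,T-s]}\|\tilde\rho_1(\cdot,t)-\tilde\rho_2(\cdot,t)\|_{L^1_x}\leq\|\rho_{s,1,0}-\rho_{s,2,0}\|_{L^1_x},
\end{equation*}
and untranslating $t\mapsto t-s$ yields the claimed bound on $[s,T]$. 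The only step requiring some care, and thus the main (mild) obstacle, is verifying that the backward--in--time extension of $(\beta_k)_{k\in\mathbb{N}}$ glued at $t=0$ is genuinely a two--sided Brownian motion with respect to a filtration containing the original $\{\mathcal{F}_t\}_{t\in[0,T]}$, so that $\tilde\xi^F$ is a legitimate noise in the sense required by Definition~\ref{def: definition of the noise}; this is a standard construction on a product probability space, and once it is in place every other ingredient of the well-posedness theory of \cite{Shyam25} is invariant under deterministic time translation, so no new analytic work is needed.
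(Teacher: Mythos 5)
Your proposal is correct and matches the paper's intent: the paper states this corollary without proof, asserting only that the well-posedness and contraction on $[s,T]$ ``follow precisely the same arguments'' as the $s=0$ case, and your time-translation reduction (shifting the Brownian motions, filtration, solution and kinetic measure by $s$ and invoking Theorem~\ref{thm: standard well posedness thm from Popat25} together with \eqref{eq: standard l1 contraction}) is exactly the natural way to make that assertion rigorous, since all coefficients are time-homogeneous. The only point you rightly flag — that the glued two-sided noise is a legitimate noise in the sense of Definition~\ref{def: definition of the noise} with respect to a filtration for which $\rho_0$ is measurable at time $s$ — is indeed the sole place where anything beyond translation invariance is used, and it is handled by the standard product-space construction you describe.
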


We want to prove that the semigroup corresponding to $\rho(t;\rho_0)$ as defined in Definition \ref{def: definition of semigroup} is Feller.
In order to do this, we first show the \say{flow property} for kinetic solutions of \eqref{eq: SPDE extended to negative times}.
In words, the flow property states that the random solution at time $t$ obtained by starting from $\rho_0$ at time $s_1$ is the same (in the weighted $L^1_x$-space in expectation) as if you first flow forward to time $s_2$ and restart the dynamics at $s_2$ with that intermediate state, and flow to time $t$.
The statement \eqref{eq: flow property} compares the laws of the solutions defined on different time domains ($[s_1,T]$ and $[s_2,T]$), but both are evaluated at a common future time $t$.
We note the similarity of this property with the Chapman-Kolmogorov equation from Markov processes.

\begin{proposition}[Flow property]\label{prop: flow property}
        For every $\rho_0\in L^2_x$ and $-\infty\leq s_1\leq s_2\leq t \leq T$, it holds that 
    \begin{equation}\label{eq: flow property}
        \rho_{s_1}(t;\rho_0)=\rho_{s_2}(t;\rho_{s_1}(s_2;\rho_0)),\quad \text{in } L^1_\omega L^1_{w;x}.
    \end{equation}
\end{proposition}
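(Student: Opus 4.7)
The strategy is to reduce the equality to the uniqueness statement of Corollary \ref{corrolary: well posedness and contraction for DK on arbitrary time interval} (via Theorem \ref{thm: standard well posedness thm from Popat25}) applied on the time interval $[s_2,T]$. Concretely, I will argue that both sides of \eqref{eq: flow property} are stochastic kinetic solutions of equation \eqref{eq: SPDE extended to negative times} on $[s_2,T]$ with the same $\mathcal{F}_{s_2}$-measurable initial datum $\rho_{s_1}(s_2;\rho_0)$ and the same boundary datum $\rho_b$, hence must coincide almost surely in $L^1_x$. The equality in $L^1_\omega L^1_{w;x}$ then follows from the uniform boundedness of the weight $w$ on $U$ (cf.~\eqref{eq: equivalence between l1 norm and weighted l1 norm}) combined with the $L^1_\omega L^1_x$-contraction in the last line of Theorem \ref{thm: standard well posedness thm from Popat25}, applied to dominate the random variable and invoke the almost sure equality.

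The verification that the restriction $\tilde{\rho}:=\rho_{s_1}(\cdot;\rho_0)|_{[s_2,T]}$, together with the restricted kinetic measure $\tilde{q} := q_{s_1}|_{U\times(0,\infty)\times[s_2,T]}$, is a kinetic solution on $[s_2,T]$ reduces to checking the points of Definition \ref{def: stochastic kinetic solution of DK}. The integrability of the flux terms and the local regularity \eqref{eq: local regularity property of stochastic kinetic solution} on $[s_2,T]$ are inherited immediately from the corresponding properties on $[s_1,T]$. The measure-theoretic requirements \eqref{eq: bound for kinetic measure}, \eqref{eq: decay of kinetic measure at infinity} are likewise inherited. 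For the kinetic equation, I write equation \eqref{eq: kinetic equation} on the interval $[s_1,t]$ for an arbitrary $t\in[s_2,T]$, and subtract from it the same identity on $[s_1,s_2]$; the stochastic integral decomposes additively because each $f_k(x)\beta_k$ has independent increments and $\psi$ is deterministic, and all other integrals are over disjoint time intervals. What remains is precisely \eqref{eq: kinetic equation} on $[s_2,t]$ with initial velocity profile $\chi(x,\xi,s_2) = \mathbbm{1}_{\{0<\xi<\rho_{s_1}(s_2;\rho_0)\}}$, i.e.\ the kinetic function associated with the initial condition $\rho_{s_1}(s_2;\rho_0)$.

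Since $\tilde{\rho}$ is predictable with respect to $\{\mathcal{F}_t\}_{t\geq s_2}$ and almost surely continuous into $L^1_x$, and $\rho_{s_1}(s_2;\rho_0)$ is $\mathcal{F}_{s_2}$-measurable, both $\tilde{\rho}$ and $\rho_{s_2}(\cdot;\rho_{s_1}(s_2;\rho_0))$ satisfy all conditions of Definition \ref{def: stochastic kinetic solution of DK} on $[s_2,T]$ with the same data. The uniqueness statement of Corollary \ref{corrolary: well posedness and contraction for DK on arbitrary time interval} then yields, $\mathbb{P}$-almost surely,
\begin{equation*}
\sup_{t\in[s_2,T]} \|\rho_{s_1}(t;\rho_0) - \rho_{s_2}(t;\rho_{s_1}(s_2;\rho_0))\|_{L^1_x} = 0,
\end{equation*}
and multiplying by $w$ and using $0\le w\le c_{\mathrm{up}}<\infty$ converts this into the desired equality in $L^1_\omega L^1_{w;x}$.

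The main obstacle is ensuring that $\rho_{s_1}(s_2;\rho_0)$ is admissible as an initial condition for the restarted equation, namely $\rho_{s_1}(s_2;\rho_0)\in L^2_\omega L^2_x$ as required by Assumption \ref{asm: assumption on initial data and boundary condition}. This is not immediate from $\rho_0\in L^2_x$ alone, because the well-posedness theorem only provides $C([s_1,T];L^1_x)$-regularity of trajectories. I would handle this by appealing to the a priori $L^\infty_t L^2_\omega L^2_x$-bounds on solutions established in the well-posedness analysis of \cite{Shyam25} (which are used to construct the solution in the first place), so that $\rho_{s_1}(s_2;\rho_0)$ automatically lies in $L^2_\omega L^2_x$. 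Alternatively, if such bounds are unavailable at the required integrability, the equality can first be established for bounded initial data $\rho_0\in L^\infty_x\cap L^2_x$, and then extended to general $\rho_0\in L^2_x$ by the $L^1_\omega L^1_x$-contraction \eqref{eq: standard l1 contraction}, which ensures continuity of the solution map $\rho_0\mapsto \rho_{s_1}(t;\rho_0)$ in $L^1_\omega L^1_{w;x}$.
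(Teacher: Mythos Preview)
Your proof is correct and follows essentially the same approach as the paper: restrict the kinetic equation to $[s_2,T]$ by subtracting the identity at time $s_2$, check that the restricted process satisfies Definition~\ref{def: stochastic kinetic solution of DK} on the subinterval, and invoke uniqueness from Corollary~\ref{corrolary: well posedness and contraction for DK on arbitrary time interval}. You are in fact more careful than the paper on one point: the paper does not address whether the restarted initial datum $\rho_{s_1}(s_2;\rho_0)$ lies in $L^2_\omega L^2_x$, whereas you correctly flag this and propose a resolution via the a priori estimates from \cite{Shyam25} or an approximation argument.
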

\begin{proof}

Due to the time-homogeneous coefficients in equation \eqref{SPDE-0} it suffices to prove the flow property \eqref{eq: flow property} for $(s_1,s_2)=(0,s)$. 
    That is, we wish to prove that
    \begin{equation}\label{eq: equation for the flow property in the case 0,s}
        \rho(t;\rho_0)=\rho_{s}(t;\rho(s;\rho_0)).
    \end{equation}

For $0<s<t<T$, taking the kinetic equation \eqref{eq: kinetic equation} for $\rho(t;\rho_0)$ and subtracting it from the kinetic equation for $\rho(s;\rho_0)$ gives that, for every $\psi\in C^{\infty}_c(U\times\mathbb{R}_+)$, 
\begin{align*}\
&\int_{\mathbb{R}}\int_{U}\chi(x,\xi,t)\psi(x,\xi)\,dx\,d\xi=\int_{\mathbb{R}}\int_{U}\chi(x,\xi,s)\psi(x,\xi)\,dx\,d\xi \nonumber\\
&-\int_s^t \int_{U}\left(\Phi'(\rho)\nabla\rho+\frac{1}{2}F_1[\sigma'(\rho)]^2\nabla\rho +\frac{1}{2} \sigma'(\rho)\sigma(\rho)F_2\right)\cdot\nabla\psi(x,\xi)|_{\xi=\rho}\,dx\,ds\nonumber\\
    &-\int_{\mathbb{R}}\int_s^t\int_{U}\partial_\xi\psi(x,\xi)\,dq +\frac{1}{2}\int_s^t \int_{U}\left(
    \sigma'(\rho)\sigma(\rho)\nabla\rho\cdot F_2 + \sigma(\rho)^2F_3 \right)\partial_\xi\psi(x,\rho)\,dx\,ds\nonumber\\
    & -\int_s^t \int_{U}\psi(x,\rho)\nabla\cdot (\sigma(\rho) \,d{\xi}^F)\,dx -        \int_s^t \int_{U}\psi(x,\rho)\nabla\cdot\nu(\rho)\,dx\,ds.
\end{align*}

By inspection, due to the uniqueness of stochastic kinetic solutions, it follows that $\rho(t;\rho_0)$ satisfies the kinetic equation on the subinterval $[s,t]$ started at the point $\rho(s;\rho_0)$.
The other regularity conditions of Definition \ref{def: stochastic kinetic solution of DK} hold by the fact that $\rho(t;\rho_0)$ was assumed to be a stochastic kinetic solution on the whole interval $[0,t]$.
By the uniqueness of solutions, Corollary \ref{corrolary: well posedness and contraction for DK on arbitrary time interval}, the flow property \eqref{eq: equation for the flow property in the case 0,s} holds.
\end{proof}

\begin{proposition}[Semigroup is Feller]\label{prop: semigroup is Feller}
Suppose that $(\Phi,\sigma)$ satisfy either Assumption \ref{asm: classic-DK} or \ref{asm: assumptions on non-linear coeffs for contraction estime}, and that $(\Phi,\nu)$ further satisfy Assumptions \ref{asm: Assumptions on nu} and \ref{asm: assumption on curly A}.

The family $(P_t)_{t\geq 0}$ is a Feller semigroup.
 That is, $P_t$ maps $C_b(L^1_{w;x})$ to $C_b(L^1_{w;x})$.
\end{proposition}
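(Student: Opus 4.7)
The plan is to verify the two defining properties of $C_b(L^1_{w;x})$ directly. Fix $F \in C_b(L^1_{w;x})$ and $t \geq 0$. Boundedness of $P_t F$ is immediate: for every $\rho_0 \in L^1_{w;x}$,
\begin{equation*}
|P_t F(\rho_0)| = \left|\mathbb{E}\bigl[F(\rho(t;\rho_0))\bigr]\right| \leq \sup_{\tilde\rho \in L^1_{w;x}}|F(\tilde\rho)| < \infty.
\end{equation*}
Continuity of $\rho_0 \mapsto P_tF(\rho_0)$ is therefore the main content. Measurability of $P_tF$ will follow from continuity, since any continuous function on a metric space is Borel measurable.

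The key input will be the pathwise $L^1_x$-contraction \eqref{eq: standard l1 contraction} established in Theorem \ref{thm: standard well posedness thm from Popat25}. Let $\rho_{0,n} \to \rho_0$ in $L^1_{w;x}$. By the norm equivalence \eqref{eq: equivalence between l1 norm and weighted l1 norm}, this is equivalent to $\rho_{0,n} \to \rho_0$ in $L^1_x$. Applying \eqref{eq: standard l1 contraction} to each pair $(\rho_{0,n},\rho_0)$ and intersecting the countably many $\mathbb{P}$-null sets, we obtain on a single event of full measure
\begin{equation*}
\sup_{s\in[0,T]}\bigl\|\rho(\cdot,s;\rho_{0,n}) - \rho(\cdot,s;\rho_0)\bigr\|_{L^1_x} \leq \|\rho_{0,n} - \rho_0\|_{L^1_x} \longrightarrow 0,
\end{equation*}
as $n\to\infty$. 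Re-applying \eqref{eq: equivalence between l1 norm and weighted l1 norm}, this gives $\rho(t;\rho_{0,n}) \to \rho(t;\rho_0)$ almost surely in $L^1_{w;x}$. Since $F \in C_b(L^1_{w;x})$, the continuity of $F$ yields $F(\rho(t;\rho_{0,n})) \to F(\rho(t;\rho_0))$ almost surely, and the uniform bound $|F| \leq \sup_{\tilde\rho}|F(\tilde\rho)|$ allows us to invoke the dominated convergence theorem to conclude
\begin{equation*}
P_t F(\rho_{0,n}) = \mathbb{E}\bigl[F(\rho(t;\rho_{0,n}))\bigr] \longrightarrow \mathbb{E}\bigl[F(\rho(t;\rho_0))\bigr] = P_t F(\rho_0),
\end{equation*}
which establishes the desired continuity.

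The only technical subtlety to address is that \eqref{eq: standard l1 contraction} is stated in Theorem \ref{thm: standard well posedness thm from Popat25} for $\mathcal{F}_0$-measurable $L^2_x$-valued initial data, whereas $C_b(L^1_{w;x})$ concerns arbitrary deterministic initial data in $L^1_{w;x}=L^1_x$. As noted after Theorem \ref{thm: standard well posedness thm from Popat25}, the contraction estimate combined with the density of $L^2_x$ in $L^1_x$ allows one to extend the solution map $\rho_0 \mapsto \rho(\cdot;\rho_0)$ to a continuous map from $L^1_x$ to $L^1(\Omega;C([0,T];L^1_x))$ on which \eqref{eq: standard l1 contraction} continues to hold, so the above argument applies without modification. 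This extension is routine and is not expected to be the main obstacle; the entire proof reduces to one dominated-convergence argument powered by the pathwise $L^1_x$-contraction.
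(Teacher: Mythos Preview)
Your proof is correct and cleanly addresses the claim stated after ``That is'': you show directly that $P_tF$ is bounded (trivially) and that $\rho_0\mapsto P_tF(\rho_0)$ is continuous, using the almost-sure $L^1_x$-contraction \eqref{eq: standard l1 contraction}, the norm equivalence \eqref{eq: equivalence between l1 norm and weighted l1 norm}, and dominated convergence. The last paragraph correctly flags and resolves the only subtlety, the extension of the solution map from $L^2_x$ to $L^1_x$ initial data via density and the contraction.

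The paper's proof takes a different route: rather than focusing on the map $C_b\to C_b$, it runs through the full list of Feller-semigroup axioms---$P_0=\mathrm{Id}$, positivity preservation, the sup-norm contraction bound, strong continuity in $t$ (i.e.\ $\lim_{t\to0}P_tF(\rho_0)=F(\rho_0)$), and the semigroup identity $P_{t+s}=P_tP_s$ via the flow property of Proposition~\ref{prop: flow property}. Curiously, the paper's itemised proof does not explicitly verify continuity of $\rho_0\mapsto P_tF(\rho_0)$, which is the property singled out in the statement; your argument supplies exactly this missing piece. Conversely, you do not verify the semigroup property or strong continuity in time, which the paper does and which are needed for the term ``Feller semigroup'' in the full sense. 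The two proofs are thus complementary: yours is the targeted argument for $P_t:C_b\to C_b$, while the paper establishes the surrounding semigroup structure.
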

\begin{proof}
We prove the properties of a Feller semigroup one by one.
\begin{itemize}
    \item $P_0=Id$: This is trivial since $P_0 F(\rho_0):=\mathbb{E} \left[F(\rho(0;\rho_0))\right]=F(\rho_0)$.
\item Positive preserving: If $F\geq0$ then using the definition of $P_t$, it holds that $P_t F\geq0$.
Furthermore, it holds that $P_t1=1$.
\item $P_t$ is a contraction: This follows by standard properties of the expectation. 
For a given $F\in C_b(L^1_{w;x})$ and $\rho_0\in L^2_{x}$, we have
\begin{equation*}
|P_t F(\rho_0)|=|\mathbb{E} \left[F(\rho(t;\rho_0))\right]|\leq \mathbb{E} \left[|F(\rho(t;\rho_0))|\right] \leq \sup_{\rho\in L^1_{w;x}}|F(\rho)|,
\end{equation*}
where the right hand side is well-defined by the fact that $F$ is bounded.
Taking the supremum over $\rho_0$ in the above equation gives the contraction property.
\item $P_t$ is continuous on $C_b(L^1_{w;x})$: Since $F\in C_b(L^1_{w;x})$ and solutions to \eqref{eq: SPDE extended to negative times} are continuous in time, it holds that for every $\rho_0\in L^2_{x}$ we have $\lim_{t\to0}P_tF(\rho_0)=F(\rho_0)$.
\item $P_t$ satisfies the semigroup property $P_{t+s}=P_sP_t$:
    Without loss of generality consider $-\infty\leq s\leq t$. 
    Taking $F\in C_b(L^1_{w;x})$ and $\rho_0\in L^2_{x}$, applying the proof of Theorem 9.14 of Da Prato and Zabczyk \cite{da2014stochastic}, due to the time-homogeneity of the coefficients and the flow property, we have that 
    \begin{align*}
         P_{t+s} F(\rho_0)=\mathbb{E}\left[\mathbb{E} \left[F(\rho(t+s;\rho_0))|\mathcal{F}_s\right]\right]=\mathbb{E}\left[\mathbb{E}\left[F(\rho_s(t;\zeta))\right]|_{\zeta=\rho(s;\rho_0)}\right]=P_t(P_s F(\rho_0)). 
    \end{align*}
\end{itemize}
\end{proof}

\begin{remark}
    Although it is not necessary for the Feller property above, we mention that if we work over a bounded subset of the space of $L^1_{w;x}$, for instance $L^1_{w;x;M}:=\{\rho_0\in L^2_{x}: \|\rho_0\|_{L^1_{w;x}}\leq M \}$, we can strengthen the continuity property of the fourth point above to uniform continuity. 
    This is a consequence of the fact that
\begin{multline}\label{eq: strong continuity over bounded initial data}
    \sup_{\rho_0\in L^1_{w;x;M}}|P_tF(\rho_0)-F(\rho_0)|\\
    \leq \sup_{\rho_0\in L^1_{w;x;M}}|P_tF(\rho_0)-P_tF_n(\rho_0)|+\sup_{\rho_0\in L^1_{w;x;M}}|P_tF_n(\rho_0)-F_n(\rho_0)|+\sup_{\rho_0\in L^1_{w;x;M}}|F_n(\rho_0)-F(\rho_0)|\\
    \leq 2\sup_{\rho_0\in L^1_{w;x;M}}|F_n(\rho_0)-F(\rho_0)|+\sup_{\xi\in L^1_{w;x;M}}|D F_n(\xi)|\sup_{\rho_0\in L^1_{w;x;M}}\mathbb{E}\|\rho(t;\rho_0)-\rho_0\|_{L^1_{w;x}},
\end{multline}
where $(F_n)_{n\geq1}\subseteq C^1_b(L^1_{w;x})$ satisfies 
\begin{equation*}
    \sup_{\rho_0\in A}|F_n(\rho_0)-F(\rho_0)|\to0,\quad\text{as $n\to\infty$, for every bounded set $A\subseteq L^1_{w;x}$}.
\end{equation*}
The first term in equation \eqref{eq: strong continuity over bounded initial data} converges to zero as $n\to\infty$ due to the fact that $F_n\to F$, and the second term converges to zero as $t\to0$ due to the time continuity of solutions.
The strong continuity follows by taking the limits $n\to\infty$ followed by $t\to \infty$ in equation \eqref{eq: strong continuity over bounded initial data}.
\end{remark}
The below theorem not only gives us existence and uniqueness of invariant measures, but provides us with an exponential mixing rate uniformly with respect to the initial condition.
The proof is similar to Theorem 4.3.9 of Pr\'ev\^ot and R\"ockner \cite{prevot2007concise}, see also Theorem 3.6 of Dong, Zhang and Zhang \cite{DZZ23} and Theorem 3.13 of Dareiotis, Gess and Tsatsoulis \cite{DGT20}.
\begin{theorem}\label{thm: existence and uniqueness of invariant measure}
    We have the following dichotomy.
    
    (i) Suppose that $(\Phi,\sigma)$ satisfy either Assumption \ref{asm: assumptions on non-linear coeffs for contraction estime}, and that $(\Phi,\nu)$ further satisfy Assumptions \ref{asm: Assumptions on nu} and \ref{asm: assumption on curly A}. Then there exists a unique invariant measure $\mu\in \mathcal{M}_1(L^1_{w;x})$ for the semigroup $P_t$. 
        Furthermore, there exists a constant $c\in(0,\infty)$ depending only on $\alpha,d,U, w$ and $q_0$ (but notably independent of $t$) such that for every $t\geq 0$, we have
        \begin{equation}\label{eq: polynomial bound for mixing of invariant measure}
            \sup_{\rho_0\in L^2_{x}}\sup_{\|F\|_{Lip(L^1_{w;x})}\leq 1}\left|P_t F(\rho_0)-\int_{L^1_{w;x}}F(\xi)\mu(d\xi)\right| \leq ct^{-1/q_0}.
    \end{equation}
        The space $Lip(L^1_{w;x})$ denotes the space of functions from $L^1_{w;x}$ to $\mathbb{R}$ that are Lipschitz continuous.
        
        (ii) Suppose that $(\Phi,\sigma)$ satisfy either Assumption \ref{asm: classic-DK}, and that $(\Phi,\nu)$ further satisfy Assumptions \ref{asm: Assumptions on nu} and \ref{asm: assumption on curly A}. Then there exists a unique invariant measure $\mu\in \mathcal{M}_1(L^1_{w;x})$ for the semigroup $P_t$. Furthermore, there exist constants $c_1,c_2\in(0,\infty)$ depending only on $w$, $d$, and $U$ (but notably independent of $t$) such that, for every $\rho_0\in L^2_{x}$ and $t\ge0$,  
\begin{equation}\label{exponential-decay}
    \sup_{\|F\|_{\mathrm{Lip}(L^1_{w;x})}\le1}
    \left|P_t F(\rho_0)-\int_{L^1_{w;x}}F(\xi)\,\mu(d\xi)\right|
    \le c_1 e^{-c_2 t}. 
\end{equation}
\end{theorem}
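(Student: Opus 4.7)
The plan is to build the invariant measure $\mu$ by a backward-in-time Cauchy construction, using the extension of \eqref{SPDE-0} to negative initial times (Corollary~\ref{corrolary: well posedness and contraction for DK on arbitrary time interval}) and the flow property (Proposition~\ref{prop: flow property}), and to read off the rates directly from Proposition~\ref{prop: polynomial decay of solutions}. I fix a deterministic reference $\rho_0^{\ast}\in L^2_x$ and set $\rho^n:=\rho_{-n}(0;\rho_0^{\ast})$ for $n\in\mathbb{N}$. For $m>n$ the flow property yields $\rho^m=\rho_{-n}\bigl(0;\rho_{-m}(-n;\rho_0^{\ast})\bigr)$ and $\rho^n=\rho_{-n}(0;\rho_0^{\ast})$. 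Conditioning on $\mathcal{F}_{-n}$ and applying the uniform-in-initial-data estimate of Proposition~\ref{prop: polynomial decay of solutions} on the interval $[-n,0]$ (of length $n$) gives
\begin{equation*}
\mathbb{E}\|\rho^m-\rho^n\|_{L^1_{w;x}}\leq c\,n^{-1/q_0}\text{ in case (i),}\qquad \mathbb{E}\|\rho^m-\rho^n\|_{L^1_{w;x}}\leq c_1 e^{-c_2 n}\text{ in case (ii).}
\end{equation*}
Hence $\rho^n$ converges in $L^1(\Omega;L^1_{w;x})$ to some $\rho_\infty$, and I define $\mu:=\mathrm{Law}(\rho_\infty)$.

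To verify invariance, I check $\int P_tF\,d\mu=\int F\,d\mu$ for every $F\in C_b(L^1_{w;x})$. The Feller property (Proposition~\ref{prop: semigroup is Feller}) gives $P_tF\in C_b$, and along an almost-surely-convergent subsequence the $L^1_{w;x}$-convergence $\rho^n\to\rho_\infty$ yields $\mathbb{E}[P_tF(\rho^n)]\to\int P_tF\,d\mu$. On the other hand, since $\rho^n$ is $\mathcal{F}_0$-measurable and the noise on $[0,t]$ is independent of $\mathcal{F}_0$, the flow property gives $\mathbb{E}[P_tF(\rho^n)]=\mathbb{E}\bigl[F(\rho_{-n}(t;\rho_0^{\ast}))\bigr]$, which by time-homogeneity equals $\mathbb{E}[F(\rho^{n+t})]\to\int F\,d\mu$. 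Uniqueness then follows from the rate estimate below by a standard test-function argument over Lipschitz $F$.

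For the quantitative rate, I combine invariance with Proposition~\ref{prop: polynomial decay of solutions}: for $\rho_0\in L^2_x$ and $\|F\|_{\mathrm{Lip}(L^1_{w;x})}\leq 1$,
\begin{equation*}
\Bigl|P_tF(\rho_0)-\int F\,d\mu\Bigr|
=\Bigl|\int\bigl(P_tF(\rho_0)-P_tF(\tilde\rho_0)\bigr)\mu(d\tilde\rho_0)\Bigr|
\leq\int\mathbb{E}\|\rho(t;\rho_0)-\rho(t;\tilde\rho_0)\|_{L^1_{w;x}}\,\mu(d\tilde\rho_0),
\end{equation*}
and the bounds \eqref{eq: polynomial rate of decay for two solutions of DK} and \eqref{eq: exponential contraction for classical case of lipschitz nu} deliver \eqref{eq: polynomial bound for mixing of invariant measure} and \eqref{exponential-decay} respectively. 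The main obstacle is that Proposition~\ref{prop: polynomial decay of solutions} requires $\tilde\rho_0\in L^2_\omega L^2_x$, whereas $\mu$ is a priori only a probability measure on $L^1_{w;x}$. I expect to resolve this by establishing a uniform-in-$n$ $L^2_x$-energy estimate on $\rho^n$ via the It\^o form of \eqref{SPDE-0}, showing $\mu$ is supported on $L^2_x$; alternatively, truncating $\tilde\rho_0$ to $\tilde\rho_0\wedge K$ and passing to the limit through the $L^1_x$-contraction \eqref{eq: standard l1 contraction} of Theorem~\ref{thm: standard well posedness thm from Popat25} works.
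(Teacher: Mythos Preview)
Your proposal is correct and follows essentially the same route as the paper: a backward-in-time Cauchy construction of $\eta_s(\rho_0)=\rho_s(0;\rho_0)$ via the flow property and Proposition~\ref{prop: polynomial decay of solutions}, invariance via time-homogeneity of the two-parameter semigroup, and the rate by integrating the two-point contraction against $\mu$. Your explicit flagging of the $L^2_x$-support issue for $\mu$ is actually more careful than the paper, which applies Proposition~\ref{prop: polynomial decay of solutions} without discussing whether the $\mu$-typical initial datum lies in $L^2_\omega L^2_x$; your proposed fixes (uniform $L^2$ energy estimate or truncation plus the $L^1$-contraction) are the natural ways to close this.
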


\begin{proof}
    \textbf{Step 1: Defining and proving well-posedness of the invariant measure}

For $s<0$ we define $\eta_s(\rho_0):=\rho_s(0,\rho_0)$ to be the solution of equation \eqref{eq: SPDE extended to negative times} time $0$ initiated at $\rho_0$ at time $s$.
Using the flow property, Proposition \ref{prop: flow property}, it holds that for every $s_1\leq s_2\leq-1$ that
\begin{equation*}
    \eta_{s_1}(\rho_0)=\rho_{s_2}(0;\rho_{s_1}(s_2;\rho_0)),\quad \text{in }L^1_\omega L^1_{w;x}.
\end{equation*}
By the polynomial decay of solutions, Proposition \ref{prop: polynomial decay of solutions}, we have that 
\begin{equation*}
    \mathbb{E}\|\eta_{s_2}(\rho_0)-\eta_{s_1}(\rho_0)\|_{L^1_{w;x}}=\mathbb{E}\|\rho_{s_2}(0,\rho_0)-\rho_{s_1}(0,\rho_0)\|_{L^1_{w;x}}\lesssim \|w\|_{L^{q^*}_x}^{q^*}|s_2|^{-1/q_0}.
\end{equation*}
The last inequality tells us that $(\eta_s(\rho_0))_{s\leq-1}$ is a Cauchy sequence in the space $L^1_\omega L^1_{w;x}$. 
Consequently, it holds that $\eta_s(\rho_0)\to\eta(\rho_0)$ as $s\to-\infty$ for some random variable $\eta(\rho_0)\in L^1_\omega L^1_{w;x}$.

It holds that $\eta$ is independent of the initial condition $\rho_0$, which can be seen by again using the polynomial decay of solutions, Proposition \ref{prop: polynomial decay of solutions},
\begin{equation*}
    \mathbb{E}\|\eta_s(\rho_{1,0})-\eta_s(\rho_{2,0})\|_{L^1_{w;x}}=\mathbb{E}\|\rho_{s}(0,\rho_{1,0})-\rho_{s}(0,\rho_{2,0})\|_{L^1_{w;x}}\lesssim \|w\|_{L^{q^*}_x}^{q^*}|s|^{-1/q_0}
\end{equation*}
and sending $s\to-\infty$.

We now define
\begin{equation*}
    \mu:=\mathcal{L}(\eta)\in \mathcal{M}_1(L^1_{w;x})
\end{equation*}
where we can choose arbitrary initial condition, for instance $\eta=\eta(0)$.

For $s\leq t$, denote by $P_{s,t}$ the semigroup associated to equation \eqref{eq: SPDE extended to negative times} at time $t$, defined in a similar way as Definition \ref{def: definition of semigroup} by $ P_{s,t} F(\rho_0):=\mathbb{E} \left[F(\rho_s(t;\rho_0))\right]$.
In the notation of Definition \ref{def: definition of semigroup}, $P_t=P_{0,t}$.
By analogous reasoning as Proposition \ref{prop: semigroup is Feller}, $P_{s,t}$ is Feller for every $s<t$ and further satisfies the identities \begin{equation*}
    \begin{cases}
        P_{s,t}=P_{s+\tau,t+\tau},\quad \tau\in\mathbb{R},\\
        P_{s,\tau}P_{\tau,t}=P_{s,t},\quad s<\tau<t.
    \end{cases}
\end{equation*}
We further note that the first property follows directly from the 
homogeneity of the coefficients in \eqref{SPDE-0} together with the 
stationarity of Brownian increments. In particular, by the identity in law
$$
  \xi^{F}(t+\tau)-\xi^{F}(\tau)\overset{d}{=}\xi^{F}(t),
$$
the desired conclusion follows.

Using these properties and the definition of $\mu$ gives for every $F\in C_b(L^1_{w;x})$
\begin{equation*}
    \int_{L^1_{w;x}}P_{0,t}F(\tilde\rho)\mu(d\tilde\rho)=\lim_{s\to\infty} P_{-s,0}(P_{0,t}F)(0)=\lim_{s\to\infty}P_{-s,t}F(0)=P_{-(t+s),0}F(0)=\int_{L^1_{w;x}}F(\tilde\rho)\mu(d\tilde\rho).
\end{equation*}
This implies that $P_t^*\mu=\mu$ as measures on $L^1_{w;x}$, i.e. $\mu$ is an invariant measure for the semigroup $P_t$.

\textbf{Step 2: The bound \eqref{eq: polynomial bound for mixing of invariant measure}}

By the polynomial decay of Proposition \ref{prop: polynomial decay of solutions}, it holds that for $F\in Lip(L^1_{w;x})$ and $\rho_{1,0},\rho_{2,0}\in L^2_{x}$
\begin{equation*}
    \left|P_tF(\rho_{1,0})-P_tF(\rho_{2,0})\right|=\left|\mathbb{E} \left[F(\rho(t;\rho_{1,0}))-F(\rho(t;\rho_{2,0}))\right]\right|\lesssim \|F\|_{Lip(L^1_{w;x})}\|w\|_{L^{q^*}_x}^{q^*}|t|^{-1/q_0}.
\end{equation*}
This implies that any two invariant measures $\mu$ and $\tilde\mu$ on $L^1_{w;x}$ coincide. 
This also gives us that 
\begin{equation*}
    \left|P_tF(\rho_{1,0})-\int_{L^1_{w,x}}F(\tilde\rho)\mu(d\tilde\rho)\right|\lesssim \|F\|_{Lip(L^1_{w;x})}\|w\|_{L^{q^*}_x}^{q^*}|t|^{-1/q_0},
\end{equation*}
which proves the first claim \eqref{eq: polynomial bound for mixing of invariant measure} after taking the supremum over $\|F\|_{Lip(L^1_{w;x})}\leq1$ and $\xi\in L^1_{w;x}$.

For the second claim, we consider the case that either Assumption~\ref{asm: classic-DK} is satisfied or that $\nu$ is Lipschitz.
By following precisely the same steps as above but using the exponential decay rate \eqref{eq: exponential contraction for classical case of lipschitz nu} rather than the polynomial one, we get the exponential decay \eqref{exponential-decay}. 
\end{proof}

Using an essentially identical approach, combining with Corollary \ref{coro: L1 omega super contraction}, we have the following regularization by noise phenomenon for the case of porous medium Dean--Kawasaki equation. 
\begin{corollary}[Regularization by noise]\label{coro: L1 omega super contraction-ergodic}
Suppose that $(\Phi,\sigma,\nu)$ satisfy Assumptions~\ref{asm: assumptions on non-linear coeffs for contraction estime}, \ref{asm: Assumptions on nu}, and~\ref{asm: assumption on curly A}. 
Suppose further that $\Psi_\sigma$ as defined in \eqref{eq: def of Psi sigma} is well-defined and satisfies the lower Lipschitz property \eqref{Lip-Psisigma}.
Let $(P_t)_{t\ge0}$ denote the semigroup generated by \eqref{SPDE-0}, and let $\mu$ be its invariant measure. Then there exist constants $c_1,c_2\in(0,\infty)$ depending only on $w$, $d$, and $U$ (but notably independent of $t$) such that, for every $\rho_0\in L^2_{x}$ and $t\ge0$,  
\begin{equation*}
 \sup_{\|F\|_{\mathrm{Lip}(L^1_{w;x})}\le1}
    \left|P_t F(\rho_0)-\int_{L^1_{w;x}}F(\xi)\,\mu(d\xi)\right|
    \le c_1 e^{-c_2 t}. 
\end{equation*}
\end{corollary}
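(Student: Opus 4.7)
The plan is to mirror the argument for Theorem \ref{thm: existence and uniqueness of invariant measure}(ii), replacing every invocation of the classical Dean--Kawasaki decay estimate by the analogous exponential estimate provided by Corollary \ref{prp-improvedrate}. Indeed, the hypotheses of Corollary \ref{coro: L1 omega super contraction-ergodic} are precisely those of Corollary \ref{prp-improvedrate}, so for any two initial data $\rho_{1,0},\rho_{2,0}\in L^2_x$ we have
\begin{equation*}
    \mathbb{E}\|\rho(t;\rho_{1,0})-\rho(t;\rho_{2,0})\|_{L^1_{w;x}}
    \leq \mathbb{E}\|\rho_{1,0}-\rho_{2,0}\|_{L^1_{w;x}}\exp(-C\|w\|_{L^\infty_x}^{-1}t).
\end{equation*}

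First I would run the construction of Step 1 of the proof of Theorem \ref{thm: existence and uniqueness of invariant measure}: for $s\leq-1$ define $\eta_s(\rho_0):=\rho_s(0;\rho_0)$ as the solution at time $0$ of \eqref{eq: SPDE extended to negative times} started at $\rho_0$ at time $s$. Using the flow property (Proposition \ref{prop: flow property}) together with the exponential contraction above, for $s_1\le s_2\le -1$ one has
\begin{equation*}
    \mathbb{E}\|\eta_{s_2}(\rho_0)-\eta_{s_1}(\rho_0)\|_{L^1_{w;x}}
    = \mathbb{E}\|\rho_{s_2}(0;\rho_0)-\rho_{s_2}(0;\rho_{s_1}(s_2;\rho_0))\|_{L^1_{w;x}}
    \lesssim e^{-C\|w\|_{L^\infty_x}^{-1}|s_2|},
\end{equation*}
so $(\eta_s(\rho_0))_{s\le-1}$ is Cauchy in $L^1_\omega L^1_{w;x}$; call the limit $\eta(\rho_0)$. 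A second application of the exponential bound shows $\eta(\rho_0)$ is independent of $\rho_0$ $\mathbb{P}$-a.s., and I set $\mu:=\mathcal{L}(\eta)$. Invariance $P_t^*\mu=\mu$ then follows verbatim from the argument in Theorem \ref{thm: existence and uniqueness of invariant measure}, using the Feller property (Proposition \ref{prop: semigroup is Feller}) and the semigroup identities $P_{s+\tau,t+\tau}=P_{s,t}$ and $P_{s,\tau}P_{\tau,t}=P_{s,t}$.

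Finally, for the quantitative bound, I would fix $F\in\mathrm{Lip}(L^1_{w;x})$ with $\|F\|_{\mathrm{Lip}(L^1_{w;x})}\le1$ and $\rho_0\in L^2_x$. Integrating the invariance against $F$ and using the exponential contraction one more time yields
\begin{equation*}
    \left|P_t F(\rho_0)-\int_{L^1_{w;x}}F(\tilde\rho)\,\mu(d\tilde\rho)\right|
    \le \int_{L^1_{w;x}}\mathbb{E}\|\rho(t;\rho_0)-\rho(t;\tilde\rho)\|_{L^1_{w;x}}\,\mu(d\tilde\rho)
    \le c_1 e^{-c_2 t},
\end{equation*}
with $c_2=C\|w\|_{L^\infty_x}^{-1}$ and $c_1$ absorbing $\int\mathbb{E}\|\rho_0-\tilde\rho\|_{L^1_{w;x}}\,\mu(d\tilde\rho)$, which is finite uniformly in $\rho_0$ over bounded subsets by the contraction \eqref{eq: standard l1 contraction} and the $L^1_x$-equivalence \eqref{eq: equivalence between l1 norm and weighted l1 norm}; taking the supremum over $F$ closes the estimate.

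Since Corollary \ref{prp-improvedrate} already supplies the hard analytic input, no step here is a genuine obstacle: the argument is essentially a bookkeeping of the Da Prato--Zabczyk-type ergodicity scheme with the exponential rate substituted for the polynomial one. The only mild subtlety is ensuring that the constant $c_1$ can indeed be chosen independently of $\rho_0\in L^2_x$; this is where the equivalence of $\|\cdot\|_{L^1_x}$ and $\|\cdot\|_{L^1_{w;x}}$ together with the standard $L^1_x$-contraction \eqref{eq: standard l1 contraction} is invoked to control $\mathbb{E}\|\rho(t;\rho_0)-\rho(t;\tilde\rho)\|_{L^1_{w;x}}$ at any fixed initial time before the exponential decay takes over.
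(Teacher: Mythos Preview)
Your approach is correct and essentially identical to the paper's: the paper simply states that the corollary follows by an ``essentially identical approach'' to Theorem~\ref{thm: existence and uniqueness of invariant measure}, substituting the exponential contraction of Corollary~\ref{prp-improvedrate} for the polynomial one.

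One point is worth sharpening. You correctly flag the uniformity of $c_1$ in $\rho_0\in L^2_x$ as the only subtlety, but your proposed fix via the $L^1_x$-contraction \eqref{eq: standard l1 contraction} does not actually resolve it: that contraction still carries the factor $\|\rho_0-\tilde\rho\|_{L^1_x}$, which is unbounded over $L^2_x$. The clean repair is the one you hint at with ``at any fixed initial time before the exponential decay takes over'': for $t\ge1$, invoke the \emph{uniform} polynomial bound of Proposition~\ref{prop: polynomial decay of solutions} on $[0,1]$ (which is independent of the initial data) to control $\mathbb{E}\|\rho(1;\rho_{1,0})-\rho(1;\rho_{2,0})\|_{L^1_{w;x}}$ by a universal constant, then apply the exponential bound of Corollary~\ref{prp-improvedrate} on $[1,t]$ via the Markov/flow property. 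For $t\in[0,1]$ the estimate is trivial after adjusting $c_1$. The paper's terse proof glosses over this as well, so you have not missed anything the paper supplies.
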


We conclude by transferring the convergence rate results of the semigroup established in Theorem~\ref{thm: existence and uniqueness of invariant measure} and Corollary~\ref{coro: L1 omega super contraction-ergodic} to the corresponding convergence of the law of the stochastic process \eqref{SPDE-0} in the 1-Wasserstein distance.
We further make the observation that the invariant measure can be considered on $L^1_x$ rather than the weighted space $L^1_{w,x}$ by the equivalence of the two norms.
\begin{corollary}
    We have the following dichotomy. 

   (i) Suppose that $(\Phi,\sigma)$ satisfy either Assumption  \ref{asm: assumptions on non-linear coeffs for contraction estime}, and that $(\Phi,\nu)$ further satisfy Assumptions \ref{asm: Assumptions on nu} and \ref{asm: assumption on curly A}. Then there exists a unique invariant measure $\mu\in \mathcal{M}_1(L^1_x)$ for the semigroup $P_t$. Furthermore, there exists a constant $c\in(0,\infty)$ depending only on $w,d,U$ and $q_0$ (but notably independent of $t$) such that for every $t>0$,
    \begin{equation}\label{eq: wasserstein distance of invariant measure and law of process}
        \sup_{\rho_0\in L^2_x}\mathcal{W}_1\left(\mathcal{D}_{\mathbb{P}}\left(\rho(t;\rho_0)\right), \mu\right)\leq  ct^{-1/q_0},
    \end{equation}
    where $\mathcal{W}_1$ denotes the 1-Wasserstein distance and $\mathcal{D}_{\mathbb{P}}\left(\rho(t;\rho_0)\right):=\mathbb{P}\circ \rho(t;\rho_0)^{-1}$ denotes the law of $\rho(t;\rho_0)$ under $\mathbb{P}$.
    
     (ii) Suppose that $(\Phi,\sigma)$ satisfy either Assumption \ref{asm: assumptions on non-linear coeffs for contraction estime}, and that $(\Phi,\nu)$ further satisfy Assumptions \ref{asm: Assumptions on nu} and \ref{asm: assumption on curly A}. Then there exists a unique invariant measure $\mu\in \mathcal{M}_1(L^1_x)$ for the semigroup $P_t$. Furthermore, there exist constants $c_1,c_2\in(0,\infty)$ depending only on $w$, $d$, and $U$ (but notably independent of $t$) such that, for every $\rho_0\in L^2_x$ and $t\ge0$,  
\begin{equation}\label{exponential-decay-2}
\mathcal{W}_1\left(\mathcal{D}_{\mathbb{P}}\left(\rho(t;\rho_0)\right), \mu\right)\leq c_1 e^{-c_2 t}.
\end{equation}
\end{corollary}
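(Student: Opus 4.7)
The plan is to obtain the claim directly from Theorem~\ref{thm: existence and uniqueness of invariant measure} and Corollary~\ref{coro: L1 omega super contraction-ergodic} using Kantorovich--Rubinstein duality, after transferring between the norms $L^1_x$ and $L^1_{w;x}$ via the equivalence \eqref{eq: equivalence between l1 norm and weighted l1 norm}. Since $L^1_x$ is a separable Banach space, the $1$-Wasserstein distance between probability measures $\mu_1,\mu_2$ on $L^1_x$ admits the representation
\begin{equation*}
\mathcal{W}_1(\mu_1,\mu_2)
= \sup_{\|F\|_{\mathrm{Lip}(L^1_x)}\le 1}
\left|\int_{L^1_x} F\,d\mu_1 - \int_{L^1_x} F\,d\mu_2\right|.
\end{equation*}
Applying this with $\mu_1=\mathcal{D}_{\mathbb{P}}(\rho(t;\rho_0))$ and $\mu_2=\mu$ immediately identifies the quantity in \eqref{eq: wasserstein distance of invariant measure and law of process}/\eqref{exponential-decay-2} with
$\sup_{\|F\|_{\mathrm{Lip}(L^1_x)}\le1}\,|P_tF(\rho_0)-\int F\,d\mu|$.

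Next I would reduce this $L^1_x$-Lipschitz supremum to the $L^1_{w;x}$-Lipschitz supremum already estimated in Theorem~\ref{thm: existence and uniqueness of invariant measure} and Corollary~\ref{coro: L1 omega super contraction-ergodic}. By the pointwise equivalence $c_{\mathrm{low}}\|f\|_{L^1_x}\le \|f\|_{L^1_{w;x}}\le c_{\mathrm{up}}\|f\|_{L^1_x}$ from \eqref{eq: equivalence between l1 norm and weighted l1 norm}, any $F$ with $\|F\|_{\mathrm{Lip}(L^1_x)}\le 1$ satisfies $\|F\|_{\mathrm{Lip}(L^1_{w;x})}\le c_{\mathrm{low}}^{-1}$. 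The same norm equivalence ensures that the invariant measure $\mu$ constructed on $L^1_{w;x}$ in Theorem~\ref{thm: existence and uniqueness of invariant measure} may be viewed as a Borel probability measure on $L^1_x$ (the identity map between the two spaces is a homeomorphism), and uniqueness carries over verbatim since the two spaces have identical Borel $\sigma$-algebras.

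In case (i), invoking \eqref{eq: polynomial bound for mixing of invariant measure} in Theorem~\ref{thm: existence and uniqueness of invariant measure} applied with $c_{\mathrm{low}}^{-1}F$ in place of $F$ yields
\begin{equation*}
\mathcal{W}_1(\mathcal{D}_{\mathbb{P}}(\rho(t;\rho_0)),\mu)
\le c_{\mathrm{low}}^{-1}\sup_{\|G\|_{\mathrm{Lip}(L^1_{w;x})}\le 1}\left|P_tG(\rho_0)-\int G\,d\mu\right|\le c\, t^{-1/q_0},
\end{equation*}
uniformly in $\rho_0\in L^2_x$. In case (ii), substituting the exponential bound of Corollary~\ref{coro: L1 omega super contraction-ergodic} in the same inequality produces $\mathcal{W}_1(\mathcal{D}_{\mathbb{P}}(\rho(t;\rho_0)),\mu)\le c_1 e^{-c_2t}$.

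No step is genuinely delicate: the norm-equivalence transfer is trivial, and the Kantorovich--Rubinstein identity is standard on separable metric spaces. The only mild subtlety is the consistency of the invariant measure when the base space is changed from $L^1_{w;x}$ to $L^1_x$; this is handled by noting that the two norms generate the same topology on $L^1_x=L^1_{w;x}$ (as sets) and therefore the same Borel $\sigma$-algebra, so $\mu$ transfers uniquely between the two formulations and the stationarity property $P_t^*\mu=\mu$ is preserved.
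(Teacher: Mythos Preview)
Your argument is essentially the paper's own proof: norm equivalence between $L^1_x$ and $L^1_{w;x}$ to transport the invariant measure and Lipschitz classes, followed by Kantorovich--Rubinstein duality to identify the Wasserstein distance with the Lipschitz supremum already controlled. The paper states this in two sentences, citing Theorem~\ref{thm: existence and uniqueness of invariant measure} and Villani's Theorem~5.10.

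One small discrepancy: for part~(ii) you invoke Corollary~\ref{coro: L1 omega super contraction-ergodic}, but that corollary requires the additional lower-Lipschitz hypothesis~\eqref{Lip-Psisigma} on $\Psi_\sigma$, which is not part of the stated assumptions in~(ii). The paper instead derives both~(i) and~(ii) from Theorem~\ref{thm: existence and uniqueness of invariant measure}; the hypotheses in the corollary's part~(ii) are evidently a typo for Assumption~\ref{asm: classic-DK} (otherwise (i) and (ii) would have identical hypotheses but incompatible conclusions), and under Assumption~\ref{asm: classic-DK} the exponential bound~\eqref{exponential-decay} from Theorem~\ref{thm: existence and uniqueness of invariant measure}(ii) is the correct input. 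With that correction your argument matches the paper exactly.
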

\begin{proof}
Since the weighted norm $L^1_{w;x}$ is equivalent to the $L^1_x$-norm, it follows that we can find an invariant measure on the space $L^1_x$. 

    Estimates \eqref{eq: wasserstein distance of invariant measure and law of process} and \eqref{exponential-decay-2} follow directly from Theorem~\ref{thm: existence and uniqueness of invariant measure} together with the Kantorovich--Rubinstein formula, see Villani~\cite[Theorem~5.10]{villani2008optimal}. 
\end{proof}

We have the same improved result for the regularization by noise case.
\begin{corollary}
    Finally, in the case that $\Psi_\sigma$ as defined in \eqref{eq: def of Psi sigma} is well-defined and satisfies the lower Lipschitz property \eqref{Lip-Psisigma}, then there exist constants $c_1,c_2\in(0,\infty)$ depending only on $w$, $d$, and $U$ (but notably independent of $t$) such that, for every $\rho_0\in L^2_x$ and $t\ge0$,  
\begin{equation}\label{exponential-decay-3}
\mathcal{W}_1\left(\mathcal{D}_{\mathbb{P}}\left(\rho(t;\rho_0)\right), \mu\right)\leq c_1 e^{-c_2 t}. 
\end{equation}
\end{corollary}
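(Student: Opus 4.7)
The plan is to reduce the Wasserstein statement to the semigroup estimate already established in Corollary~\ref{coro: L1 omega super contraction-ergodic}, exactly mirroring the argument used for the previous corollary (which derived \eqref{eq: wasserstein distance of invariant measure and law of process} and \eqref{exponential-decay-2} from Theorem~\ref{thm: existence and uniqueness of invariant measure}). The only change is to invoke the improved exponential bound available in the regularization-by-noise regime.

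First I would note that under the lower Lipschitz assumption \eqref{Lip-Psisigma} on $\Psi_\sigma$, Corollary~\ref{coro: L1 omega super contraction-ergodic} provides constants $c_1,c_2 \in (0,\infty)$, independent of $t$, such that for every $\rho_0 \in L^2_x$,
\begin{equation*}
    \sup_{\|F\|_{\mathrm{Lip}(L^1_{w;x})}\le 1}
    \left|P_tF(\rho_0) - \int_{L^1_{w;x}} F(\xi)\,\mu(d\xi)\right|
    \le c_1 e^{-c_2 t}.
\end{equation*}
By the Kantorovich--Rubinstein duality formula (Villani \cite[Theorem 5.10]{villani2008optimal}), the left-hand side is precisely the $1$-Wasserstein distance between the law $\mathcal{D}_{\mathbb{P}}(\rho(t;\rho_0))$ and $\mu$, computed with respect to the metric induced by $\|\cdot\|_{L^1_{w;x}}$. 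This immediately yields the exponential bound \eqref{exponential-decay-3} in the weighted space.

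To transfer the statement to the unweighted Wasserstein distance, I would invoke the two-sided equivalence \eqref{eq: equivalence between l1 norm and weighted l1 norm}: there exist $0 < c_{\mathrm{low}} \le c_{\mathrm{up}} < \infty$ such that $c_{\mathrm{low}}\|f\|_{L^1_x} \le \|f\|_{L^1_{w;x}} \le c_{\mathrm{up}}\|f\|_{L^1_x}$ for all $f \in L^1_x$. This implies that a function Lipschitz with constant $1$ with respect to $\|\cdot\|_{L^1_x}$ is Lipschitz with constant $c_{\mathrm{low}}^{-1}$ with respect to $\|\cdot\|_{L^1_{w;x}}$ (and conversely). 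Consequently the two Wasserstein distances are comparable up to multiplicative constants absorbable into $c_1$, and $\mu$ can be viewed as an element of $\mathcal{M}_1(L^1_x)$ via this identification. The resulting bound is exactly the claim \eqref{exponential-decay-3}.

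There is no real obstacle here: the core analytic work, namely the improved super-contraction from the $\Psi_\sigma$ dissipation, has already been done in Corollary~\ref{corrolary: improved super contraction in porous medium case} and propagated through to the semigroup level in Corollary~\ref{prp-improvedrate} and Corollary~\ref{coro: L1 omega super contraction-ergodic}. The corollary at hand is a purely formal restatement in a metric-geometric language, and my proof would simply be one or two sentences combining Corollary~\ref{coro: L1 omega super contraction-ergodic}, the Kantorovich--Rubinstein identity, and the norm equivalence \eqref{eq: equivalence between l1 norm and weighted l1 norm}.
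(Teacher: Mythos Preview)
Your proposal is correct and follows essentially the same approach as the paper: combine Corollary~\ref{coro: L1 omega super contraction-ergodic} with the Kantorovich--Rubinstein duality (Villani~\cite[Theorem~5.10]{villani2008optimal}). Your additional remark on the norm equivalence \eqref{eq: equivalence between l1 norm and weighted l1 norm} to pass between $L^1_{w;x}$ and $L^1_x$ is a helpful clarification that the paper leaves implicit.
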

\begin{proof}
    This estimate follows directly by combining Corollary~\ref{coro: L1 omega super contraction-ergodic} with the Kantorovich--Rubinstein formula, see Villani~\cite[Theorem~5.10]{villani2008optimal}. .
\end{proof}

\textbf{Acknowledgements}
The first author was supported by the EPSRC Centre for Doctoral Training in Mathematics of Random Systems:
Analysis, Modelling and Simulation (EP/S023925/1), and now acknowledges funding from the SDAIM project ANR-22-CE40-0015 funded by the French National Research Agency (ANR).
The second author is supported by the US Army Research Office, grant W911NF2310230. 

\bibliography{references}       
\bibliographystyle{alpha}  
\end{document}